\newtheorem{theorem}{Theorem}[section]
\newtheorem{myprop}[theorem]{Proposition}
\newtheorem{mylem}[theorem]{Lemma}
\newtheorem{mycor}[theorem]{Corollary}
\theoremstyle{definition}
\newtheorem{mydef}[theorem]{Definition}
\newtheorem{myrem}[theorem]{Remark}
\newtheorem{myex}[theorem]{Example}
\numberwithin{equation}{section}
\def\namedlabel#1#2{\begingroup
    #2%
    \def\@currentlabel{#2}%
    \phantomsection\label{#1}\endgroup
}
\tikzset{anchorbase/.style={>=stealth,baseline={([yshift=-0.5ex]current bounding box.center)}}}
\newcommand\C{\mathbb{C}}
\newcommand\N{\mathbb{N}}
\newcommand\Z{\mathbb{Z}}
\newcommand\cB{\mathcal{B}}
\newcommand\cM{\mathcal{M}}
\newcommand\red{\mathrm{red}}
\newcommand\fh{\mathfrak{h}}
\newcommand\fsl{\mathfrak{sl}}
\newcommand\te{\tilde{e}}
\newcommand\tf{\tilde{f}}
\DeclareMathOperator{\wt}{wt}
\newcommand{\pref}[1]{(\ref{#1})}
\title{Crystals from 5-vertex ice models}
\author{J. Lorca Espiro}
\address[J. Lorca Espiro]{Departamento de F\'\i sica Matem\'atica do Instituto de F\'\i sica, Universidade de S\~{a}o Paulo \\
Department of Mathematics and Statistics, University of Ottawa \\
Departamento de Ciencias F\'\i sicas, Facultad de Ingenier\'\i a, Ciencias y Administraci\'on, Universidad de La Frontera}
\email{j.lorca.espiro@usp.br}
\author{Luke Volk}
\address[Luke Volk]{ Department of Mathematics and Statistics, University of Ottawa}
\email{lvolk005@uottawa.ca}
\subjclass[2010]{Primary 17B37; Secondary 17B10}
\keywords{Ice models, crystals}
\begin{document}

\begin{abstract}
  Given a partition $\lambda$ corresponding to a dominant integral weight of $\fsl_n$, we define the structure of crystal on the set of 5-vertex ice models satisfying certain boundary conditions associated to $\lambda$.  We then show that the resulting crystal is isomorphic to that of the irreducible representation of highest weight $\lambda$.
\end{abstract}

\maketitle
\thispagestyle{empty}

\tableofcontents

%
\section{Introduction} \label{sec_Intro}
%

Six-vertex ice models were introduced by Linus Pauling as a method of studying crystals with hydrogen bonds, such as ice or potassium dihydrogen phosphate.  These models consist of grid graphs with edges labelled by spins (see Figure~\ref{icemodelfig}).  We refer the reader to \cite{Bax89} for an overview of these models in statistical mechanics.  The creation of this theoretical model allows for the application of mathematical tools coming from such diverse areas as combinatorics, number theory, representation theory, and dynamical systems.  For example, the partition function (a certain sum indexed by the possible states of the model) for six-vertex ice models satisfying certain boundary conditions has been computed in many cases, and related to such mathematical objects as Gelfand--Tsetlin patterns (see \cite{Tok88}) and Schur polynomials (see \cite{BBF11}).  See also \cite{BBCFSG12,BBB16,BBBF17} for further recent results.

In the current paper we consider $5$-\emph{vertex} ice models, derived from the six-vertex models by forbidding one vertex configuration.  To a partition $\lambda$, one can naturally associate a boundary condition for such ice models.  The set of models satisfying this boundary condition can be shown to be in bijection with semistandard Young tableaux of shape $\lambda$.  See \cite{Rokotoa}.

Young tableaux appear in many areas of combinatorics and representation theory.  Most important for the current paper is the fact that, given a partition $\lambda$, one can realize the crystal $\cB(\lambda)$ of the $\fsl_n$-representation of highest weight $\lambda$ in terms of Young tableaux of shape $\lambda$.  See, for example, the exposition in \cite[\S8.2]{hongintroduction}.  Combined with the bijection of \cite{Rokotoa}, this implies that one can define the structure of a crystal on the set $\cM(\lambda)$ of $5$-vertex ice models, with boundary condition given by $\lambda$, and obtain a crystal isomorphic to $\cB(\lambda)$.

The goal of the current paper is to precisely define a crystal structure on $\cM(\lambda)$ and show that the resulting crystal is isomorphic to $\cB(\lambda)$.  Instead of using the bijection with Young tableaux, our method of proof is direct.  We define the crystal structure explicitly and then verify directly that this crystal is indeed isomorphic to $\cB(\lambda)$, without ever referring to Young tableaux.  The key to our approach is a local characterization of crystals of simply-laced type due to Stembridge in \cite{Stembridge}.

The outline of the paper is as follows.  In Section~\ref{subsec:background}, we briefly recall the definition of $5$-vertex ice models and semiregular $\fsl_n$-crystals.  Then, in Section~\ref{sec_CrystalOnIce}, we endow the set $\cM(\lambda)$ with the structure of a crystal.  One of the key ingredients in Stembridge's characterization is the notion of a \emph{regular} crystal.  We show that $\cM(\lambda)$ is regular in Section~\ref{sec_Reg}.  Finally, we prove our main theorem, that $\cM(\lambda)$ is isomorphic to $\cB(\lambda)$, in Section~\ref{sec_CrystalM}.

The results of the current paper help further clarify the deep connection between ice models and representation theory.  Our hope is that it will lead to an increased level of understanding of the two fields, including further research inspired by developing other constructions in combinatorial representation theory (e.g.\ crystals in affine type $A$) in the language of ice models.

\subsection*{Acknowledgments}

We would like to thank Alistair Savage for his guidance and support, and for suggesting the topic of the current paper.  We also would like to thank Anthony Licata and Peter Tingley for their suggestions on how to proceed with certain proofs.

We also thank the University of Ottawa (J.L.E \& L.V.), the University of S\~{a}o Paulo (J.L.E.), and Universidad de la Frontera (J.L.E) for their logistical and technical support during the completion of this work.

%
\section{Background} \label{subsec:background}
%

\subsection{Ice models} \label{sec_IceModels}

In this section, we introduce our main object of study: $5$-vertex ice models.

\begin{mydef}[Ice model]
  For $n, s \in \N^+ = \{1,2,3,\dotsc\}$, an $n \times s$ \emph{ice model} consists of an $n \times s$ rectangular lattice and an assignment of exactly one sign (i.e.\ element of $\{+,-\}$) to each of the four edges adjacent to each vertex. The columns of an ice model are numbered from left to right $1, 2, \dotsc, s$ while the rows, from top to bottom, $n, n-1,\dotsc,1$. Figure \ref{icemodelfig} is an example of a $3 \times 5$ ice model.
\end{mydef}

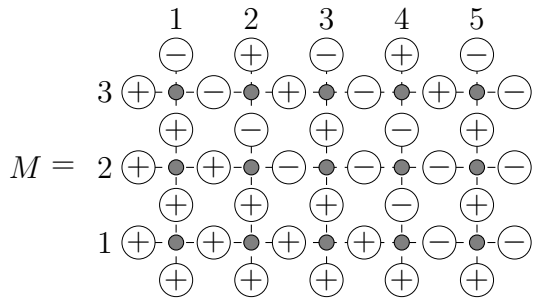
\begin{figure}[!h]
  \begin{center}
    \begin{tikzpicture}
      \node at (-4.5,0) {$M$};
      \node at (-4,0) {$=$};
      \draw [dashed, thin] (-3,-1) -- (2,-1);
      \draw [dashed, thin] (-3,0) -- (2,0);
      \draw [dashed, thin] (-3,1) -- (2,1);
      \draw [dashed, thin] (-2.5,-1.5) -- (-2.5,1.5);
      \draw [dashed, thin] (-1.5,-1.5) -- (-1.5,1.5);
      \draw [dashed, thin] (-0.5,-1.5) -- (-0.5,1.5);
      \draw [dashed, thin] (0.5,-1.5) -- (0.5,1.5);
      \draw [dashed, thin] (1.5,-1.5) -- (1.5,1.5);
      \draw [fill=white,ultra thin] (-3,-1) circle [radius=0.22];
      \node at (-3,-1) {$ + $};
      \node[left] at (-3.2,-1) {$1$};
      \draw [fill=white,ultra thin] (-3,0) circle [radius=0.22];
      \node at (-3,0) {$ + $};
      \node[left] at (-3.2,0) {$2$};
      \draw [fill=white,ultra thin] (-3,1) circle [radius=0.22];
      \node at (-3,1) {$ + $};
      \node[left] at (-3.2,1) {$3$};
      \draw [fill=white,ultra thin] (-2,-1) circle [radius=0.22];
      \node at (-2,-1) {$ + $};
      \draw [fill=white,ultra thin] (-2,0) circle [radius=0.22];
      \node at (-2,0) {$ + $};
      \draw [fill=white,ultra thin] (-2,1) circle [radius=0.22];
      \node at (-2,1) {$ - $};
      \draw [fill=white,ultra thin] (-1,-1) circle [radius=0.22];
      \node at (-1,-1) {$ + $};
      \draw [fill=white,ultra thin] (-1,0) circle [radius=0.22];
      \node at (-1,0) {$ - $};
      \draw [fill=white,ultra thin] (-1,1) circle [radius=0.22];
      \node at (-1,1) {$ + $};
      \draw [fill=white,ultra thin] (0,-1) circle [radius=0.22];
      \node at (0,-1) {$ + $};
      \draw [fill=white,ultra thin] (0,0) circle [radius=0.22];
      \node at (0,0) {$ - $};
      \draw [fill=white,ultra thin] (0,1) circle [radius=0.22];
      \node at (0,1) {$ - $};
      \draw [fill=white,ultra thin] (1,-1) circle [radius=0.22];
      \node at (1,-1) {$ - $};
      \draw [fill=white,ultra thin] (1,0) circle [radius=0.22];
      \node at (1,0) {$ - $};
      \draw [fill=white,ultra thin] (1,1) circle [radius=0.22];
      \node at (1,1) {$ + $};
      \draw [fill=white,ultra thin] (2,-1) circle [radius=0.22];
      \node at (2,-1) {$ - $};
      \draw [fill=white,ultra thin] (2,0) circle [radius=0.22];
      \node at (2,0) {$ - $};
      \draw [fill=white,ultra thin] (2,1) circle [radius=0.22];
      \node at (2,1) {$ - $};
      \draw [fill=white,ultra thin] (-2.5,-1.5) circle [radius=0.22];
      \node at (-2.5,-1.5) {$ + $};
      \draw [fill=white,ultra thin] (-2.5,-0.5) circle [radius=0.22];
      \node at (-2.5,-0.5) {$ + $};
      \draw [fill=white,ultra thin] (-2.5,0.5) circle [radius=0.22];
      \node at (-2.5,0.5) {$ + $};
      \draw [fill=white,ultra thin] (-2.5,1.5) circle [radius=0.22];
      \node at (-2.5,1.5) {$ - $};
      \node[above] at (-2.5,1.7) {$\small{1}$};
      \draw [fill=white,ultra thin] (-1.5,-1.5) circle [radius=0.22];
      \node at (-1.5,-1.5) {$ + $};
      \draw [fill=white,ultra thin] (-1.5,-0.5) circle [radius=0.22];
      \node at (-1.5,-0.5) {$ + $};
      \draw [fill=white,ultra thin] (-1.5,0.5) circle [radius=0.22];
      \node at (-1.5,0.5) {$ - $};
      \draw [fill=white,ultra thin] (-1.5,1.5) circle [radius=0.22];
      \node at (-1.5,1.5) {$ + $};
      \node[above] at (-1.5,1.7) {$\small{2}$};
      \draw [fill=white,ultra thin] (-0.5,-1.5) circle [radius=0.22];
      \node at (-0.5,-1.5) {$ + $};
      \draw [fill=white,ultra thin] (-0.5,-0.5) circle [radius=0.22];
      \node at (-0.5,-0.5) {$ + $};
      \draw [fill=white,ultra thin] (-0.5,0.5) circle [radius=0.22];
      \node at (-0.5,0.5) {$ + $};
      \draw [fill=white,ultra thin] (-0.5,1.5) circle [radius=0.22];
      \node at (-0.5,1.5) {$ - $};
      \node[above] at (-0.5,1.7) {$\small{3}$};
      \draw [fill=white,ultra thin] (0.5,-1.5) circle [radius=0.22];
      \node at (0.5,-1.5) {$ + $};
      \draw [fill=white,ultra thin] (0.5,-0.5) circle [radius=0.22];
      \node at (0.5,-0.5) {$ - $};
      \draw [fill=white,ultra thin] (0.5,0.5) circle [radius=0.22];
      \node at (0.5,0.5) {$ - $};
      \draw [fill=white,ultra thin] (0.5,1.5) circle [radius=0.22];
      \node at (0.5,1.5) {$ + $};
      \node[above] at (0.5,1.7) {$\small{4}$};
      \draw [fill=white,ultra thin] (1.5,-1.5) circle [radius=0.22];
      \node at (1.5,-1.5) {$ + $};
      \draw [fill=white,ultra thin] (1.5,-0.5) circle [radius=0.22];
      \node at (1.5,-0.5) {$ + $};
      \draw [fill=white,ultra thin] (1.5,0.5) circle [radius=0.22];
      \node at (1.5,0.5) {$ + $};
      \draw [fill=white,ultra thin] (1.5,1.5) circle [radius=0.22];
      \node at (1.5,1.5) {$ - $};
      \node[above] at (1.5,1.7) {$\small{5}$};
      \draw [fill=gray] (-2.5,-1) circle[radius=0.1];
      \draw [fill=gray] (-2.5,0) circle[radius=0.1];
      \draw [fill=gray] (-2.5,1) circle[radius=0.1];
      \draw [fill=gray] (-1.5,-1) circle[radius=0.1];
      \draw [fill=gray] (-1.5,0) circle[radius=0.1];
      \draw [fill=gray] (-1.5,1) circle[radius=0.1];
      \draw [fill=gray] (-0.5,-1) circle[radius=0.1];
      \draw [fill=gray] (-0.5,0) circle[radius=0.1];
      \draw [fill=gray] (-0.5,1) circle[radius=0.1];
      \draw [fill=gray] (0.5,-1) circle[radius=0.1];
      \draw [fill=gray] (0.5,0) circle[radius=0.1];
      \draw [fill=gray] (0.5,1) circle[radius=0.1];
      \draw [fill=gray] (1.5,-1) circle[radius=0.1];
      \draw [fill=gray] (1.5,0) circle[radius=0.1];
      \draw [fill=gray] (1.5,1) circle[radius=0.1];
    \end{tikzpicture}
  \end{center}
  \caption{A $3 \times 5$ ice model.\label{icemodelfig}}
\end{figure}

We will denote the vertices of an $n \times s$ ice model $M$ by $M_{i,j}$ for $1 \leq i \leq n$ and $1 \leq j \leq s$, where $M_{i,j},$ denotes the vertex in row $i$ and column $j$. For any particular vertex $M_{i,j}$ of an ice model $M$, we have signs on its top, left, right, and bottom edges, which we will denote using $M_{i,j}^x$ where $x \in \{\uparrow,\leftarrow,\rightarrow,\downarrow\}$ indicates the edge in the obvious way.  Although the $6$-vertex ice model is the most commonly studied variety, we will be focused on $5$-vertex ice models which are known to be in bijection with certain sets of semistandard Young tableaux (see \cite{Rokotoa}).

\begin{mydef}[Valid vertex configuration and $5$-vertex ice model]
  The five vertex configurations in Figure \ref{validvertices} are considered \textit{valid}  configurations. If all vertices of an ice model are valid, it is called a $5$-\textit{vertex ice model}.
  \begin{figure}[h]
      \begin{center}
        \begin{subfigure}[c]{0.15\textwidth}
   			  \begin{tikzpicture}
            \draw [dashed, thin] (-1,0) -- (1,0);
            \draw [dashed, thin] (0,-1) -- (0,1);
            \draw [fill=white,ultra thin] (0,-0.5) circle [radius=0.2];
            \node at (0,-0.5) {$ + $};
            \draw [fill=white,ultra thin] (-0.5,0) circle [radius=0.2];
            \node at (-0.5,0) {$ + $};
            \draw [fill=white,ultra thin] (0.5,0) circle [radius=0.2];
            \node at (0.5,0) {$ + $};
            \draw [fill=white,ultra thin] (0,0.5) circle [radius=0.2];
            \node at (0,0.5) {$ + $};
            \draw [fill=gray] (0,0) circle[radius=0.1];
  				\end{tikzpicture}
          \caption*{Type 1}
        \end{subfigure}
        \quad
        \begin{subfigure}[c]{0.15\textwidth}
   			  \begin{tikzpicture}
            \draw [dashed, thin] (-1,0) -- (1,0);
            \draw [dashed, thin] (0,-1) -- (0,1);
            \draw [fill=white,ultra thin] (0,-0.5) circle [radius=0.2];
            \node at (0,-0.5) {$ + $};
            \draw [fill=white,ultra thin] (-0.5,0) circle [radius=0.2];
            \node at (-0.5,0) {$ - $};
            \draw [fill=white,ultra thin] (0.5,0) circle [radius=0.2];
            \node at (0.5,0) {$ - $};
            \draw [fill=white,ultra thin] (0,0.5) circle [radius=0.2];
            \node at (0,0.5) {$ + $};
            \draw [fill=gray] (0,0) circle[radius=0.1];
  				\end{tikzpicture}
          \caption*{Type 2 (box)}
        \end{subfigure}
        \quad
        \begin{subfigure}[c]{0.15\textwidth}
          \begin{tikzpicture}
            \draw [dashed, thin] (-1,0) -- (1,0);
            \draw [dashed, thin] (0,-1) -- (0,1);
            \draw [fill=white,ultra thin] (0,-0.5) circle [radius=0.2];
            \node at (0,-0.5) {$ - $};
            \draw [fill=white,ultra thin] (-0.5,0) circle [radius=0.2];
            \node at (-0.5,0) {$ - $};
            \draw [fill=white,ultra thin] (0.5,0) circle [radius=0.2];
            \node at (0.5,0) {$ + $};
            \draw [fill=white,ultra thin] (0,0.5) circle [radius=0.2];
            \node at (0,0.5) {$ + $};
            \draw [fill=gray] (0,0) circle[radius=0.1];
  				\end{tikzpicture}
          \caption*{Type 3}
        \end{subfigure}
        \quad
        \begin{subfigure}[c]{0.15\textwidth}
          \begin{tikzpicture}
            \draw [dashed, thin] (-1,0) -- (1,0);
            \draw [dashed, thin] (0,-1) -- (0,1);
            \draw [fill=white,ultra thin] (0,-0.5) circle [radius=0.2];
            \node at (0,-0.5) {$ + $};
            \draw [fill=white,ultra thin] (-0.5,0) circle [radius=0.2];
            \node at (-0.5,0) {$ + $};
            \draw [fill=white,ultra thin] (0.5,0) circle [radius=0.2];
            \node at (0.5,0) {$ - $};
            \draw [fill=white,ultra thin] (0,0.5) circle [radius=0.2];
            \node at (0,0.5) {$ - $};
            \draw [fill=gray] (0,0) circle[radius=0.1];
  				\end{tikzpicture}
          \caption*{Type 4}
          \end{subfigure}
        \quad
        \begin{subfigure}[c]{0.15\textwidth}
          \begin{tikzpicture}
            \draw [dashed, thin] (-1,0) -- (1,0);
            \draw [dashed, thin] (0,-1) -- (0,1);
            \draw [fill=white,ultra thin] (0,-0.5) circle [radius=0.2];
            \node at (0,-0.5) {$ - $};
            \draw [fill=white,ultra thin] (-0.5,0) circle [radius=0.2];
            \node at (-0.5,0) {$ - $};
            \draw [fill=white,ultra thin] (0.5,0) circle [radius=0.2];
            \node at (0.5,0) {$ - $};
            \draw [fill=white,ultra thin] (0,0.5) circle [radius=0.2];
            \node at (0,0.5) {$ - $};
            \draw [fill=gray] (0,0) circle[radius=0.1];
  				\end{tikzpicture}
          \caption*{Type 5}
        \end{subfigure}
      \end{center}
    \caption{The five valid vertex configurations of a $5$-vertex ice model\label{validvertices}}
  \end{figure}
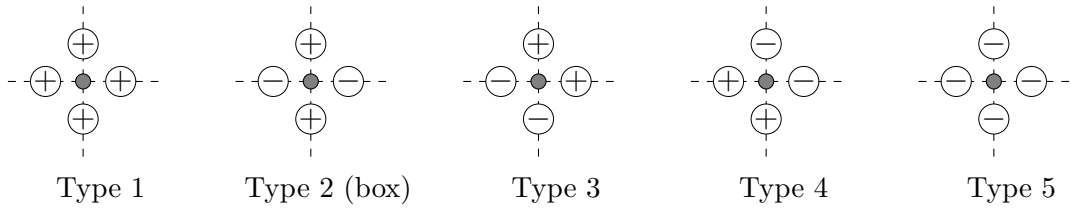
\end{mydef}

For brevity, all further uses of the term ``ice model'' will refer to $5$-vertex ice models only.

Recall that a partition is a tuple $\lambda = (\lambda_1, \lambda_2, \dotsc, \lambda_n)$ such that $\lambda_1 \geq \lambda_2 \geq \dotsc \geq \lambda_n = 0$.  Note that we allow some of the parts to be equal to zero and we force $\lambda_n=0$ since we will later want to associate partitions with weights of $\fsl_n$.  For each partition $\lambda$, there is a family of ice models satisfying a boundary condition determined by $\lambda$.

\begin{mydef}[$\cM(\lambda)$] \label{def_boundary}
  Suppose $\lambda = (\lambda_1, \lambda_2, \dotsc, \lambda_n)$ is a partition. We define $\cM(\lambda)$ to be the collection of $n\times(\lambda_1 + n)$ ice models $M$ such that:
  \begin{enumerate}
    \item \label{def:boundary-top-minuses} $M_{n,q}^\uparrow = -$ if and only if $q = \lambda_1 + j -\lambda_{j}$ for some $j \in \left\{ n, n-1 , \cdots , 1 \right\}$
    \item	$M_{p,1}^\leftarrow = +$ for all $1\leq p\leq n$;
    \item	$M_{1,q}^\downarrow = +$ for all $1 \leq q\leq \lambda_1 + n$;
    \item	$M_{p,\lambda_1 + n}^\rightarrow = -$ for all $1\leq p\leq n$.
  \end{enumerate}
\end{mydef}

The ice model $M$ shown in Figure~\ref{icemodelfig} is an element of $\cM(\lambda)$ for $\lambda = (2 , 1 , 0)$.

\subsection{Crystals} \label{sec_Crystals}

In this section we briefly recall the notion of crystals. Although the theory of crystals is developed in much greater generality, we restrict our attention here to semiregular crystals of finite type $A$.  We refer the reader to \cite{hongintroduction} for further details.

Consider the simple Lie algebra $\fsl_n$ over the field $\C$ of complex numbers and define $I = \{1,2,\dotsc,n-1\}$.  Let $E_{i,j}$ denote the matrix with a $1$ in the $(i,j)$-position and $0$ elsewhere, and define
\[
  h_i = E_{i,i} - E_{i+1,i+1},\quad i \in I.
\]
Then the $h_i$ span the standard Cartan subalgebra $\fh$ of $\fsl_n$.  For $i \in I$, define
\[
  \epsilon'_i \colon M_{n \times n}(\C) \to \C,\quad
  \epsilon'_i(B) = B_{i,i},
\]
where $M_{n \times n}(\C)$ denotes the space of $n \times n$ complex matrices.  We let $\epsilon_i$ denote the restriction of $\epsilon_i'$ to $\fh$.  Then the weight lattice of $\fsl_n$ is given by
\[
  P = \left(\bigoplus_{i=1}^{n-1} \Z\epsilon_i\right)/\Z(\epsilon_1 + \epsilon_2 + \dotsb + \epsilon_{n}) \subseteq \fh^*.
\]
We also have the simple roots $ \quad \alpha_i := \epsilon_i - \epsilon_{i+1},\quad i \in I.$

If $\langle \cdot, \cdot \rangle \colon \fh \times \fh^* \to \C$ denotes the canonical pairing, we have
\begin{equation} \label{eq:Cartan-matrix-entries}
  \langle h_j, \alpha_i \rangle = 2 \delta_{i,j} - \delta_{i+1,j} - \delta_{i,j+1},
\end{equation}
where $\delta_{k,\ell}$ denotes the Kronecker delta.

\medskip

The dominant integral weight lattice
\[
  P_+ = \bigoplus_{i=1}^{n-1} \N \alpha_i
\]
is naturally identified with the set of partitions $\lambda = (\lambda_1,\dotsc,\lambda_n)$, $\lambda_1 \ge \dotsb \ge \lambda_{n-1} \ge \lambda_n = 0$, of length at most $n-1$.  In particular, the partition $\lambda$ is identified with $\sum_{i=1}^{n-1} \lambda_i \epsilon_i$.

\begin{mydef}[Crystal] \label{crystal}
  A \emph{semiregular $\fsl_n$-crystal} is a set $\cB$ along with maps
  \[
    \wt \colon \cB \to P,\quad
    \te_i, \tf_i \colon \cB \to \cB \cup \{0\},
  \]
  such that, if we define
  \begin{gather}
    \varepsilon_i \colon \cB \to \N,\quad
    \varepsilon_i(b) = \max \{n \in \N \mid \te_i^n(b) \in \cB\}, \label{eq:varepsilon_i-def} \\
    \varphi_i \colon \cB \to \N,\quad
    \varphi_i(b) = \max \{m \in \N \mid \tf_i^m(b) \in \cB\}, \label{eq:varphi_i-def}
  \end{gather}
  then, for all $b,b' \in \cB$ and $i \in I$, the following conditions are satisfied:
  \begin{description}[style=multiline, labelwidth=1cm]
    \item[\namedlabel{C1}{C1}] $\varphi_i( b ) = \varepsilon_i( b ) + \langle h_i,\wt( b )\rangle$;
    \item[\namedlabel{C2}{C2}] $\wt(\te_i( b )) = \wt( b ) + \alpha_i$, when $\te_i( b )\in\cB$;
    \item[\namedlabel{C3}{C3}] $\wt(\tf_i( b )) = \wt( b ) - \alpha_i$, when $\tf_i( b )\in\cB$;
    \item[\namedlabel{C4}{C4}] $\varepsilon_i(\te_i( b )) = \varepsilon_i( b ) - 1$ and $\varphi_i(\te_i( b )) = \varphi_i( b ) + 1$, when $\te_i( b )\in\cB$;
    \item[\namedlabel{C5}{C5}] $\varepsilon_i(\tf_i( b )) = \varepsilon_i( b ) + 1$ and $\varphi_i(\tf_i( b )) = \varphi_i( b ) - 1$, when $\tf_i( b )\in\cB$;
    \item[\namedlabel{C6}{C6}] $\tf_i( b ) = b'$ if and only if $b = \te_i( b' )$;
  \end{description}
\end{mydef}

Throughout this paper, we will use the term \emph{crystal} to mean semiregular $\fsl_n$-crystal.  For every dominant integral weight (i.e.\ partition) $\lambda$, let $\cB(\lambda)$ be the crystal associated to the irreducible $\fsl_n$-module of highest weight $\lambda$.  The goal of the current paper is to define a crystal structure on the set $\cM(\lambda)$ of ice models, and identify the resulting crystal with $\cB(\lambda)$.  For the remainder of the paper, we fix a partition $\lambda = (\lambda_1, \lambda_2, \dotsc, \lambda_n)$, $\lambda_1 \ge \dotsb \ge \lambda_n = 0$.

%
\section{Crystal structure on ice models} \label{sec_CrystalOnIce}
%

In this section, we define a crystal structure on the set $\cM(\lambda)$ of ice models satisfying the boundary condition determined by the partition $\lambda$.

\subsection{The boxing map} \label{sec_BoxingMap}

We first introduce an important ingredient that we have called the boxing map.  The boxing map will allow us to better visualize the crystal structure.  Recall the valid vertex configurations of Figure~\ref{validvertices}.

\begin{mydef}[Boxing map]
  For $M \in \cM(\lambda)$, define
  \[
    \beta(M) = \{ (p,q) \mid 1 \le x \le n,\ 1 \le y \le \lambda_1 + n,\ M_{p,q} \text{ is of type 2} \}.
  \]
  We call $\beta$ the \emph{boxing map}. If $(p,q) \in \beta(M)$, then $M_{p,q}$ will be called a \emph{box} of $M$.
\end{mydef}

Pictorially, for an ice model $M \in \cM(\lambda)$, we indicate a box in $M$ by boxing the vertex.  If all boxes of $M$ have been indicated in this manner, we say that $M$ is \emph{boxed}.  Boxing the ice model $M$ of Figure~\ref{icemodelfig} yields the picture in Figure~\ref{boxedicemodel}

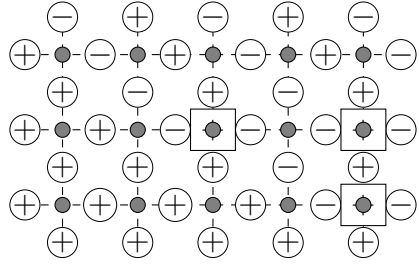
\begin{figure}[!h]
  \begin{center}
    \begin{tikzpicture}
      \draw [dashed, thin] (-3,-1) -- (2,-1);
      \draw [dashed, thin] (-3,0) -- (2,0);
      \draw [dashed, thin] (-3,1) -- (2,1);
      \draw [dashed, thin] (-2.5,-1.5) -- (-2.5,1.5);
      \draw [dashed, thin] (-1.5,-1.5) -- (-1.5,1.5);
      \draw [dashed, thin] (-0.5,-1.5) -- (-0.5,1.5);
      \draw [dashed, thin] (0.5,-1.5) -- (0.5,1.5);
      \draw [dashed, thin] (1.5,-1.5) -- (1.5,1.5);
      \draw [fill=white,ultra thin] (-3,-1) circle [radius=0.2];
      \node at (-3,-1) {$ + $};
      \draw [fill=white,ultra thin] (-3,0) circle [radius=0.2];
      \node at (-3,0) {$ + $};
      \draw [fill=white,ultra thin] (-3,1) circle [radius=0.2];
      \node at (-3,1) {$ + $};
      \draw [fill=white,ultra thin] (-2,-1) circle [radius=0.22];
      \node at (-2,-1) {$ + $};
      \draw [fill=white,ultra thin] (-2,0) circle [radius=0.2];
      \node at (-2,0) {$ + $};
      \draw [fill=white,ultra thin] (-2,1) circle [radius=0.2];
      \node at (-2,1) {$ - $};
      \draw [fill=white,ultra thin] (-1,-1) circle [radius=0.22];
      \node at (-1,-1) {$ + $};
      \draw [fill=white,ultra thin] (-1,0) circle [radius=0.2];
      \node at (-1,0) {$ - $};
      \draw [fill=white,ultra thin] (-1,1) circle [radius=0.2];
      \node at (-1,1) {$ + $};
      \draw [fill=white,ultra thin] (0,-1) circle [radius=0.2];
      \node at (0,-1) {$ + $};
      \draw [fill=white,ultra thin] (0,0) circle [radius=0.2];
      \node at (0,0) {$ - $};
      \draw [fill=white,ultra thin] (0,1) circle [radius=0.2];
      \node at (0,1) {$ - $};
      \draw [fill=white,ultra thin] (1,-1) circle [radius=0.2];
      \node at (1,-1) {$ - $};
      \draw [fill=white,ultra thin] (1,0) circle [radius=0.2];
      \node at (1,0) {$ - $};
      \draw [fill=white,ultra thin] (1,1) circle [radius=0.2];
      \node at (1,1) {$ + $};
      \draw [fill=white,ultra thin] (2,-1) circle [radius=0.2];
      \node at (2,-1) {$ - $};
      \draw [fill=white,ultra thin] (2,0) circle [radius=0.2];
      \node at (2,0) {$ - $};
      \draw [fill=white,ultra thin] (2,1) circle [radius=0.2];
      \node at (2,1) {$ - $};
      \draw [fill=white,ultra thin] (-2.5,-1.5) circle [radius=0.2];
      \node at (-2.5,-1.5) {$ + $};
      \draw [fill=white,ultra thin] (-2.5,-0.5) circle [radius=0.2];
      \node at (-2.5,-0.5) {$ + $};
      \draw [fill=white,ultra thin] (-2.5,0.5) circle [radius=0.2];
      \node at (-2.5,0.5) {$ + $};
      \draw [fill=white,ultra thin] (-2.5,1.5) circle [radius=0.2];
      \node at (-2.5,1.5) {$ - $};
      \draw [fill=white,ultra thin] (-1.5,-1.5) circle [radius=0.2];
      \node at (-1.5,-1.5) {$ + $};
      \draw [fill=white,ultra thin] (-1.5,-0.5) circle [radius=0.2];
      \node at (-1.5,-0.5) {$ + $};
      \draw [fill=white,ultra thin] (-1.5,0.5) circle [radius=0.2];
      \node at (-1.5,0.5) {$ - $};
      \draw [fill=white,ultra thin] (-1.5,1.5) circle [radius=0.2];
      \node at (-1.5,1.5) {$ + $};
      \draw [fill=white,ultra thin] (-0.5,-1.5) circle [radius=0.2];
      \node at (-0.5,-1.5) {$ + $};
      \draw [fill=white,ultra thin] (-0.5,-0.5) circle [radius=0.2];
      \node at (-0.5,-0.5) {$ + $};
      \draw [fill=white,ultra thin] (-0.5,0.5) circle [radius=0.2];
      \node at (-0.5,0.5) {$ + $};
      \node at (-0.5,0) [rectangle,draw,fill=white] {$+$};
      \draw [fill=white,ultra thin] (-0.5,1.5) circle [radius=0.2];
      \node at (-0.5,1.5) {$ - $};
      \draw [fill=white,ultra thin] (0.5,-1.5) circle [radius=0.2];
      \node at (0.5,-1.5) {$ + $};
      \draw [fill=white,ultra thin] (0.5,-0.5) circle [radius=0.2];
      \node at (0.5,-0.5) {$ - $};
      \draw [fill=white,ultra thin] (0.5,0.5) circle [radius=0.2];
      \node at (0.5,0.5) {$ - $};
      \draw [fill=white,ultra thin] (0.5,1.5) circle [radius=0.2];
      \node at (0.5,1.5) {$ + $};
      \draw [fill=white,ultra thin] (1.5,-1.5) circle [radius=0.2];
      \node at (1.5,-1.5) {$ + $};
      \draw [fill=white,ultra thin] (1.5,-0.5) circle [radius=0.2];
      \node at (1.5,-0.5) {$ + $};
      \node at (1.5,-1) [rectangle,draw,fill=white] {$+$};
      \draw [fill=white,ultra thin] (1.5,0.5) circle [radius=0.2];
      \node at (1.5,0.5) {$ + $};
      \node at (1.5,0) [rectangle,draw,fill=white] {$+$};
      \draw [fill=white,ultra thin] (1.5,1.5) circle [radius=0.2];
      \node at (1.5,1.5) {$ - $};
      \draw [fill=gray] (-2.5,-1) circle[radius=0.1];
      \draw [fill=gray] (-2.5,0) circle[radius=0.1];
      \draw [fill=gray] (-2.5,1) circle[radius=0.1];
      \draw [fill=gray] (-1.5,-1) circle[radius=0.1];
      \draw [fill=gray] (-1.5,0) circle[radius=0.1];
      \draw [fill=gray] (-1.5,1) circle[radius=0.1];
      \draw [fill=gray] (-0.5,-1) circle[radius=0.1];
      \draw [fill=gray] (-0.5,0) circle[radius=0.1];
      \draw [fill=gray] (-0.5,1) circle[radius=0.1];
      \draw [fill=gray] (0.5,-1) circle[radius=0.1];
      \draw [fill=gray] (0.5,0) circle[radius=0.1];
      \draw [fill=gray] (0.5,1) circle[radius=0.1];
      \draw [fill=gray] (1.5,-1) circle[radius=0.1];
      \draw [fill=gray] (1.5,0) circle[radius=0.1];
      \draw [fill=gray] (1.5,1) circle[radius=0.1];
    \end{tikzpicture}
  \end{center}
  \caption{\label{boxedicemodel} A boxed $3 \times 5$ ice model.}
\end{figure}

The following lemma gives an important condition on the positions of the boxes in an ice model, in particular that a box never occurs in the first column of the ice model.

\begin{mylem} \label{lemma_noboxin0}
  Suppose $M\in\cM(\lambda)$ and $(i,j)\in\beta(M)$. Then $j > 1$.
\end{mylem}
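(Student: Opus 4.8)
The plan is to derive a direct contradiction with the left-boundary condition imposed on elements of $\cM(\lambda)$. By definition, $(i,j) \in \beta(M)$ means precisely that the vertex $M_{i,j}$ is of type 2. Reading the signs off the type-2 (box) configuration in Figure~\ref{validvertices}, such a vertex has its left edge equal to $-$; that is, $M_{i,j}^\leftarrow = -$ whenever $M_{i,j}$ is a box. This is the single structural input I need about boxes.

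On the other hand, condition~(2) of Definition~\ref{def_boundary} requires $M_{p,1}^\leftarrow = +$ for every row index $p$ with $1 \le p \le n$. These two facts are incompatible precisely when a box would sit in the first column, and that incompatibility is exactly what rules out $j = 1$.

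Concretely, I would argue by contradiction. Suppose $(i,j) \in \beta(M)$ with $j = 1$. Then $M_{i,1}$ is a box, so by the type-2 configuration we have $M_{i,1}^\leftarrow = -$. But the boundary condition~(2) forces $M_{i,1}^\leftarrow = +$, a contradiction. Hence $j \ne 1$; since the indexing in the definition of $\beta(M)$ guarantees $j \ge 1$, we conclude $j > 1$, as claimed.

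There is essentially no obstacle here: the statement follows immediately once one matches the prescribed left-edge sign of a type-2 vertex against the fixed left-boundary signs. The only point requiring care is reading off Figure~\ref{validvertices} correctly, namely that the left edge of a type-2 vertex carries the sign $-$ (and not $+$); everything else is a one-line comparison with Definition~\ref{def_boundary}.
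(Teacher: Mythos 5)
Your proof is correct and is essentially identical to the paper's own argument: both note that a type-2 (box) vertex has left edge $-$, which contradicts the left-boundary condition $M_{p,1}^\leftarrow = +$ of Definition~\ref{def_boundary} if the box were in column $1$. Nothing further is needed.
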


\begin{proof}
  Let $M\in\cM(\lambda)$ and $(i,j)\in\beta(M)$. Since $(i,j)$ is a box, $M_{i,j}^\leftarrow = -$, and hence it would be a contradiction to the left boundary conditions of $\cM(\lambda)$ if $j=1$.  Thus $j>1$.
\end{proof}

\subsection{Crystal structure: definition} \label{subsec:cystal-structure-def}

We now define the maps giving the structure of a crystal on the set $\cM(\lambda)$.  We begin by defining an analog for ice models of the so-called $i$-signature commonly appearing in realizations of crystals.

\begin{mydef}[$i$-signature] \label{def_isig}
  Given an $M\in\cM(\lambda)$ and an $i \in I$, we consider the set $\beta \left( M \right)_i = \{(p,q)\in\beta(M) \mid p\in\{i,i+1\}\}$ with the lexicographic total order:
  \[
    (p,q)\preceq(p',q') \Longleftrightarrow (q < q') \quad \text{or} \quad ((q = q') \quad \text{and} \quad (p \leq p')).
  \]
  Let $m = |\beta \left( M \right)_i|$. For $1\leq j\leq m$ let $(p_j,q_j)$ be the $j^\text{th}$ element of $\beta \left( M \right)_i$ under the total order $\preceq$, and define:
  \[
    w_j =
    \begin{cases}
    \leftmoon & \text{if } p_j=i, \\
    \rightmoon & \text{if } p_j=i+1.
    \end{cases}
  \]
  The \emph{$i$-signature} of $M$, denoted $\sigma_i(M)$, is the word $w=w_1 w_2w_3 \dotsm w_m$. The \emph{reduced} $i$-signature of $M$, denoted by $\sigma^\red_i(M)$, is obtained from the $i$-signature $\sigma_i \left( M \right)$ by removing all sub-words $\leftmoon \rightmoon$ until the word is reduced to the form $\rightmoon^m \; \leftmoon^n$, for some $m,n\in\mathbb{N}$.
\end{mydef}

\begin{mylem} \label{lem_localchange}
  Let $M\in\cM(\lambda)$ and $i \in I$.
  \begin{enumerate}[(i)]
    \item	If $u\in\beta(M)$ corresponds to the first $\leftmoon$ in $\sigma^\red_i(M)$, then locally $M$ appears as in Figure~\ref{phipic}.  Furthermore, if $M$ is changed locally as in Figure~\ref{prop2fig2b}, the result is another ice model.	\label{lem2i}
    \item	If $v\in\beta(M)$ corresponds to the last $\rightmoon$ in $\sigma^\red_i(M)$, then locally $M$ appears as in Figure~\ref{eipic}.	 Furthermore, if $M$ is changed locally as in Figure~\ref{prop2fig3a}, the result is another ice model. \label{lem2ii}
  \end{enumerate}

  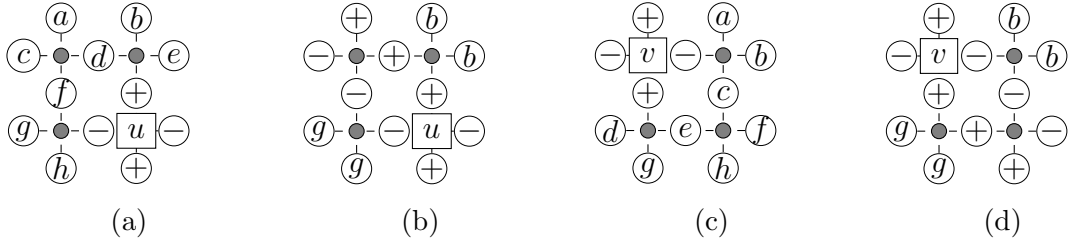
\begin{figure}[!h]
    \begin{center}
      \begin{subfigure}[c]{0.2\textwidth}
 			  \begin{tikzpicture}
          \draw [dashed, thin] (-3,-1) -- (-1,-1);
          \draw [dashed, thin] (-3,0) -- (-1,0);
          \draw [dashed, thin] (-2.5,-1.5) -- (-2.5,0.5);
          \draw [dashed, thin] (-1.5,-1.5) -- (-1.5,0.5);
          \draw [fill=white,ultra thin] (-3,-1) circle [radius=0.2];
          \node at (-3,-1) {$ g $};
          \draw [fill=white,ultra thin] (-3,0) circle [radius=0.22];
          \node at (-3,0) {$ c $};
          \draw [fill=white,ultra thin] (-2,-1) circle [radius=0.2];
          \node at (-2,-1) {$ - $};
          \draw [fill=white,ultra thin] (-2,0) circle [radius=0.2];
          \node at (-2,0) {$ d $};
          \draw [fill=white,ultra thin] (-1,-1) circle [radius=0.2];
          \node at (-1,-1) {$ - $};
          \draw [fill=white,ultra thin] (-1,0) circle [radius=0.2];
          \node at (-1,0) {$ e $};
          \draw [fill=white,ultra thin] (-1.5,-1.5) circle [radius=0.2];
          \node at (-1.5,-1.5) {$ + $};
          \draw [fill=white,ultra thin] (-1.5,-0.5) circle [radius=0.2];
          \node at (-1.5,-0.5) {$ + $};
          \draw [fill=white,ultra thin] (-1.5,0.5) circle [radius=0.2];
          \node at (-1.5,0.5) {$ b $};
          \draw [fill=white,ultra thin] (-2.5,-1.5) circle [radius=0.2];
          \node at (-2.5,-1.5) {$ h $};
          \draw [fill=white,ultra thin] (-2.5,-0.5) circle [radius=0.2];
          \node at (-2.5,-0.5) {$ f $};
          \draw [fill=white,ultra thin] (-2.5,0.5) circle [radius=0.2];
          \node at (-2.5,0.5) {$ a $};
          \draw [fill=gray] (-2.5,-1) circle[radius=0.1];
          \draw [fill=gray] (-2.5,0) circle[radius=0.1];
          \draw [fill=gray] (-1.5,0) circle[radius=0.1];		
          \node at (-1.5,-1) [rectangle,draw,fill=white] {$u$};
        \end{tikzpicture}
				\caption{\label{phipic0}}
      \end{subfigure}
      \quad
      \begin{subfigure}[c]{0.2\textwidth}
        \begin{tikzpicture}
          \draw [dashed, thin] (-3,-1) -- (-1,-1);
          \draw [dashed, thin] (-3,0) -- (-1,0);
          \draw [dashed, thin] (-2.5,-1.5) -- (-2.5,0.5);
          \draw [dashed, thin] (-1.5,-1.5) -- (-1.5,0.5);
          \draw [fill=white,ultra thin] (-3,-1) circle [radius=0.2];
          \node at (-3,-1) {$ g $};
          \draw [fill=white,ultra thin] (-3,0) circle [radius=0.2];
          \node at (-3,0) {$ - $};
          \draw [fill=white,ultra thin] (-2,-1) circle [radius=0.2];
          \node at (-2,-1) {$ - $};
          \draw [fill=white,ultra thin] (-2,0) circle [radius=0.2];
          \node at (-2,0) {$ + $};
          \draw [fill=white,ultra thin] (-1,-1) circle [radius=0.2];
          \node at (-1,-1) {$ - $};
          \draw [fill=white,ultra thin] (-1,0) circle [radius=0.2];
          \node at (-1,0) {$ b $};
          \draw [fill=white,ultra thin] (-1.5,-1.5) circle [radius=0.2];
          \node at (-1.5,-1.5) {$ + $};
          \draw [fill=white,ultra thin] (-1.5,-0.5) circle [radius=0.2];
          \node at (-1.5,-0.5) {$ + $};
          \draw [fill=white,ultra thin] (-1.5,0.5) circle [radius=0.2];
          \node at (-1.5,0.5) {$ b $};
          \draw [fill=white,ultra thin] (-2.5,-1.5) circle [radius=0.2];
          \node at (-2.5,-1.5) {$ g $};
          \draw [fill=white,ultra thin] (-2.5,-0.5) circle [radius=0.2];
          \node at (-2.5,-0.5) {$ - $};
          \draw [fill=white,ultra thin] (-2.5,0.5) circle [radius=0.2];
          \node at (-2.5,0.5) {$ + $};
          \draw [fill=gray] (-2.5,-1) circle[radius=0.1];
          \draw [fill=gray] (-2.5,0) circle[radius=0.1];
          \draw [fill=gray] (-1.5,0) circle[radius=0.1];		
          \node at (-1.5,-1) [rectangle,draw,fill=white]{$ u $};
        \end{tikzpicture}
        \caption{\label{phipic}}
      \end{subfigure}
      \quad
      \begin{subfigure}[c]{0.2\textwidth}
        \begin{tikzpicture}
          \draw [dashed, thin] (-3,-1) -- (-1,-1);
          \draw [dashed, thin] (-3,0) -- (-1,0);
          \draw [dashed, thin] (-2.5,-1.5) -- (-2.5,0.5);
          \draw [dashed, thin] (-1.5,-1.5) -- (-1.5,0.5);
          \draw [fill=white,ultra thin] (-3,-1) circle [radius=0.2];
          \node at (-3,-1) {$ d $};
          \draw [fill=white,ultra thin] (-3,0) circle [radius=0.2];
          \node at (-3,0) {$ - $};
          \draw [fill=white,ultra thin] (-2,-1) circle [radius=0.2];
          \node at (-2,-1) {$ e $};
          \draw [fill=white,ultra thin] (-2,0) circle [radius=0.2];
          \node at (-2,0) {$ - $};
          \draw [fill=white,ultra thin] (-1,-1) circle [radius=0.2];
          \node at (-1,-1) {$ f $};
          \draw [fill=white,ultra thin] (-1,0) circle [radius=0.2];
          \node at (-1,0) {$ b $};
          \draw [fill=white,ultra thin] (-1.5,-1.5) circle [radius=0.2];
          \node at (-1.5,-1.5) {$ h $};
          \draw [fill=white,ultra thin] (-1.5,-0.5) circle [radius=0.2];
          \node at (-1.5,-0.5) {$ c $};
          \draw [fill=white,ultra thin] (-1.5,0.5) circle [radius=0.2];
          \node at (-1.5,0.5) {$ a $};
          \draw [fill=white,ultra thin] (-2.5,-1.5) circle [radius=0.2];
          \node at (-2.5,-1.5) {$ g $};
          \draw [fill=white,ultra thin] (-2.5,-0.5) circle [radius=0.2];
          \node at (-2.5,-0.5) {$ + $};
          \draw [fill=white,ultra thin] (-2.5,0.5) circle [radius=0.2];
          \node at (-2.5,0.5) {$ + $};
          \node at (-2.5,0) [rectangle,draw,fill=white]{$ v $};
          \draw [fill=gray] (-2.5,-1) circle[radius=0.1];
          \draw [fill=gray] (-1.5,-1) circle[radius=0.1];
          \draw [fill=gray] (-1.5,0) circle[radius=0.1];	
        \end{tikzpicture}
        \caption{\label{eipi0}}
      \end{subfigure}
      \quad
      \begin{subfigure}[c]{0.2\textwidth}
  			\begin{tikzpicture}
          \draw [dashed, thin] (-3,-1) -- (-1,-1);
          \draw [dashed, thin] (-3,0) -- (-1,0);
          \draw [dashed, thin] (-2.5,-1.5) -- (-2.5,0.5);
          \draw [dashed, thin] (-1.5,-1.5) -- (-1.5,0.5);
          \draw [fill=white,ultra thin] (-3,-1) circle [radius=0.2];
          \node at (-3,-1) {$ g $};
          \draw [fill=white,ultra thin] (-3,0) circle [radius=0.2];
          \node at (-3,0) {$ - $};
          \draw [fill=white,ultra thin] (-2,-1) circle [radius=0.2];
          \node at (-2,-1) {$ + $};
          \draw [fill=white,ultra thin] (-2,0) circle [radius=0.2];
          \node at (-2,0) {$ - $};
          \draw [fill=white,ultra thin] (-1,-1) circle [radius=0.2];
          \node at (-1,-1) {$ - $};
          \draw [fill=white,ultra thin] (-1,0) circle [radius=0.2];
          \node at (-1,0) {$ b $};
          \draw [fill=white,ultra thin] (-1.5,-1.5) circle [radius=0.2];
          \node at (-1.5,-1.5) {$ + $};
          \draw [fill=white,ultra thin] (-1.5,-0.5) circle [radius=0.2];
          \node at (-1.5,-0.5) {$ - $};
          \draw [fill=white,ultra thin] (-1.5,0.5) circle [radius=0.2];
          \node at (-1.5,0.5) {$ b $};
          \draw [fill=white,ultra thin] (-2.5,-1.5) circle [radius=0.2];
          \node at (-2.5,-1.5) {$ g $};
          \draw [fill=white,ultra thin] (-2.5,-0.5) circle [radius=0.2];
          \node at (-2.5,-0.5) {$ + $};
          \draw [fill=white,ultra thin] (-2.5,0.5) circle [radius=0.2];
          \node at (-2.5,0.5) {$ + $};
          \node at (-2.5,0) [rectangle,draw,fill=white]{$ v $};
          \draw [fill=gray] (-2.5,-1) circle[radius=0.1];
          \draw [fill=gray] (-1.5,-1) circle[radius=0.1];
          \draw [fill=gray] (-1.5,0) circle[radius=0.1];	
        \end{tikzpicture}
        \caption{\label{eipic}}
      \end{subfigure}
      \caption{The local configuration around the box $u$ is shown in (b), and the local configuration around the box $v$ is shown in (d).}
    \end{center}
  \end{figure}

  \begin{figure}[!h]
    \begin{center}
      \begin{subfigure}[c]{0.45\textwidth}
 			  \begin{tikzpicture}[anchorbase]
          \draw [dashed, thin] (-3,-1) -- (-1,-1);
          \draw [dashed, thin] (-3,0) -- (-1,0);
          \draw [dashed, thin] (-2.5,-1.5) -- (-2.5,0.5);
          \draw [dashed, thin] (-1.5,-1.5) -- (-1.5,0.5);
          \draw [fill=white,ultra thin] (-3,-1) circle [radius=0.2];
          \node at (-3,-1) {$ g $};
          \draw [fill=white,ultra thin] (-3,0) circle [radius=0.2];
          \node at (-3,0) {$ - $};
          \draw [fill=white,ultra thin] (-2,-1) circle [radius=0.2];
          \node at (-2,-1) {$ - $};
          \draw [fill=white,ultra thin] (-2,0) circle [radius=0.2];
          \node at (-2,0) {$ + $};
          \draw [fill=white,ultra thin] (-1,-1) circle [radius=0.2];
          \node at (-1,-1) {$ - $};
          \draw [fill=white,ultra thin] (-1,0) circle [radius=0.2];
          \node at (-1,0) {$ b $};
          \draw [fill=white,ultra thin] (-1.5,-1.5) circle [radius=0.2];
          \node at (-1.5,-1.5) {$ + $};
          \draw [fill=white,ultra thin] (-1.5,-0.5) circle [radius=0.2];
          \node at (-1.5,-0.5) {$ + $};
          \draw [fill=white,ultra thin] (-1.5,0.5) circle [radius=0.2];
          \node at (-1.5,0.5) {$ b $};
          \draw [fill=white,ultra thin] (-2.5,-1.5) circle [radius=0.2];
          \node at (-2.5,-1.5) {$ g $};
          \draw [fill=white,ultra thin] (-2.5,-0.5) circle [radius=0.2];
          \node at (-2.5,-0.5) {$ - $};
          \draw [fill=white,ultra thin] (-2.5,0.5) circle [radius=0.2];
          \node at (-2.5,0.5) {$ + $};
          \node at (-1.5,-1) [rectangle,draw,fill=white]{$ u $};
          \draw [fill=gray] (-2.5,-1) circle[radius=0.1];
          \draw [fill=gray] (-2.5,0) circle[radius=0.1];
          \draw [fill=gray] (-1.5,0) circle[radius=0.1];	
        \end{tikzpicture}
        $\stackrel{\tf_i}{\rightsquigarrow}$
        \begin{tikzpicture}[anchorbase]
          \draw [dashed, thin] (-3,-1) -- (-1,-1);
          \draw [dashed, thin] (-3,0) -- (-1,0);
          \draw [dashed, thin] (-2.5,-1.5) -- (-2.5,0.5);
          \draw [dashed, thin] (-1.5,-1.5) -- (-1.5,0.5);
          \draw [fill=white,ultra thin] (-3,-1) circle [radius=0.2];
          \node at (-3,-1) {$ g $};
          \draw [fill=white,ultra thin] (-3,0) circle [radius=0.2];
          \node at (-3,0) {$ - $};
          \draw [fill=white,ultra thin] (-2,-1) circle [radius=0.2];
          \node at (-2,-1) {$ + $};
          \draw [fill=white,ultra thin] (-2,0) circle [radius=0.2];
          \node at (-2,0) {$ - $};
          \draw [fill=white,ultra thin] (-1,-1) circle [radius=0.2];
          \node at (-1,-1) {$ - $};
          \draw [fill=white,ultra thin] (-1,0) circle [radius=0.2];
          \node at (-1,0) {$ b $};
          \draw [fill=white,ultra thin] (-1.5,-1.5) circle [radius=0.2];
          \node at (-1.5,-1.5) {$ + $};
          \draw [fill=white,ultra thin] (-1.5,-0.5) circle [radius=0.2];
          \node at (-1.5,-0.5) {$ - $};
          \draw [fill=white,ultra thin] (-1.5,0.5) circle [radius=0.2];
          \node at (-1.5,0.5) {$ b $};
          \draw [fill=white,ultra thin] (-2.5,-1.5) circle [radius=0.2];
          \node at (-2.5,-1.5) {$ g $};
          \draw [fill=white,ultra thin] (-2.5,-0.5) circle [radius=0.2];
          \node at (-2.5,-0.5) {$ + $};
          \draw [fill=white,ultra thin] (-2.5,0.5) circle [radius=0.2];
          \node at (-2.5,0.5) {$ + $};
          \node at (-2.5,0) [rectangle,draw,fill=white]{$+$};
          \draw [fill=gray] (-2.5,-1) circle[radius=0.1];
          \draw [fill=gray] (-2.5,0) circle[radius=0.1];
          \draw [fill=gray] (-1.5,-1) circle[radius=0.1];
          \draw [fill=gray] (-1.5,0) circle[radius=0.1];		
        \end{tikzpicture}
        \caption{\label{prop2fig2b}}
      \end{subfigure}
      \quad
      \begin{subfigure}[c]{0.45\textwidth}
        \begin{tikzpicture}[anchorbase]
          \draw [dashed, thin] (-3,-1) -- (-1,-1);
          \draw [dashed, thin] (-3,0) -- (-1,0);
          \draw [dashed, thin] (-2.5,-1.5) -- (-2.5,0.5);
          \draw [dashed, thin] (-1.5,-1.5) -- (-1.5,0.5);
          \draw [fill=white,ultra thin] (-3,-1) circle [radius=0.2];
          \node at (-3,-1) {$ g $};
          \draw [fill=white,ultra thin] (-3,0) circle [radius=0.2];
          \node at (-3,0) {$ - $};
          \draw [fill=white,ultra thin] (-2,-1) circle [radius=0.2];
          \node at (-2,-1) {$ + $};
          \draw [fill=white,ultra thin] (-2,0) circle [radius=0.2];
          \node at (-2,0) {$ - $};
          \draw [fill=white,ultra thin] (-1,-1) circle [radius=0.2];
          \node at (-1,-1) {$ - $};
          \draw [fill=white,ultra thin] (-1,0) circle [radius=0.2];
          \node at (-1,0) {$ b $};
          \draw [fill=white,ultra thin] (-1.5,-1.5) circle [radius=0.2];
          \node at (-1.5,-1.5) {$ + $};
          \draw [fill=white,ultra thin] (-1.5,-0.5) circle [radius=0.2];
          \node at (-1.5,-0.5) {$ - $};
          \draw [fill=white,ultra thin] (-1.5,0.5) circle [radius=0.2];
          \node at (-1.5,0.5) {$ b $};
          \draw [fill=white,ultra thin] (-2.5,-1.5) circle [radius=0.2];
          \node at (-2.5,-1.5) {$ g $};
          \draw [fill=white,ultra thin] (-2.5,-0.5) circle [radius=0.2];
          \node at (-2.5,-0.5) {$ + $};
          \draw [fill=white,ultra thin] (-2.5,0.5) circle [radius=0.2];
          \node at (-2.5,0.5) {$ + $};
          \node at (-2.5,0) [rectangle,draw,fill=white]{$ v $};
          \draw [fill=gray] (-2.5,-1) circle[radius=0.1];
          \draw [fill=gray] (-1.5,-1) circle[radius=0.1];
          \draw [fill=gray] (-1.5,0) circle[radius=0.1];		
        \end{tikzpicture}
        $\stackrel{\te_i}{\rightsquigarrow}$
        \begin{tikzpicture}[anchorbase]
          \draw [dashed, thin] (-3,-1) -- (-1,-1);
          \draw [dashed, thin] (-3,0) -- (-1,0);
          \draw [dashed, thin] (-2.5,-1.5) -- (-2.5,0.5);
          \draw [dashed, thin] (-1.5,-1.5) -- (-1.5,0.5);
          \draw [fill=white,ultra thin] (-3,-1) circle [radius=0.2];
          \node at (-3,-1) {$ g $};
          \draw [fill=white,ultra thin] (-3,0) circle [radius=0.2];
          \node at (-3,0) {$ - $};
          \draw [fill=white,ultra thin] (-2,-1) circle [radius=0.2];
          \node at (-2,-1) {$ - $};
          \draw [fill=white,ultra thin] (-2,0) circle [radius=0.2];
          \node at (-2,0) {$ + $};
          \draw [fill=white,ultra thin] (-1,-1) circle [radius=0.2];
          \node at (-1,-1) {$ - $};
          \draw [fill=white,ultra thin] (-1,0) circle [radius=0.2];
          \node at (-1,0) {$ b $};
          \draw [fill=white,ultra thin] (-1.5,-1.5) circle [radius=0.2];
          \node at (-1.5,-1.5) {$ + $};
          \draw [fill=white,ultra thin] (-1.5,-0.5) circle [radius=0.2];
          \node at (-1.5,-0.5) {$ + $};
          \draw [fill=white,ultra thin] (-1.5,0.5) circle [radius=0.2];
          \node at (-1.5,0.5) {$ b $};
          \draw [fill=white,ultra thin] (-2.5,-1.5) circle [radius=0.2];
          \node at (-2.5,-1.5) {$ g $};
          \draw [fill=white,ultra thin] (-2.5,-0.5) circle [radius=0.2];
          \node at (-2.5,-0.5) {$ - $};
          \draw [fill=white,ultra thin] (-2.5,0.5) circle [radius=0.2];
          \node at (-2.5,0.5) {$ + $};
          \node at (-1.5,-1) [rectangle,draw,fill=white]{$+$};
          \draw [fill=gray] (-2.5,-1) circle[radius=0.1];
          \draw [fill=gray] (-2.5,0) circle[radius=0.1];
          \draw [fill=gray] (-1.5,-1) circle[radius=0.1];
          \draw [fill=gray] (-1.5,0) circle[radius=0.1];	
        \end{tikzpicture}
  			\caption{\label{prop2fig3a}}
      \end{subfigure}
      \caption{Figures (a) and (b) depict the local change in $M$ in Lemma~\ref{lem_localchange}\ref{lem2i} and \ref{lem2ii}, respectively.\label{fig_localfi}}
    \end{center}
  \end{figure}
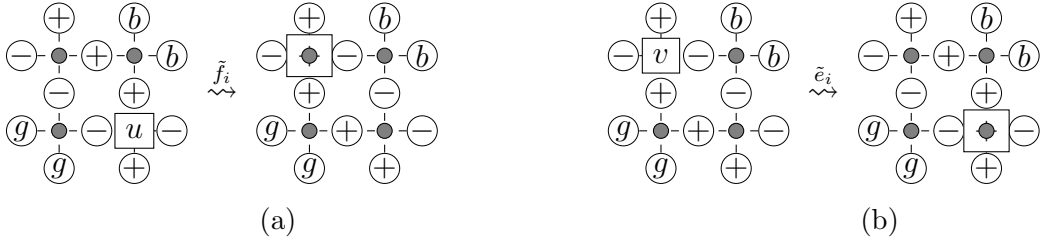
\end{mylem}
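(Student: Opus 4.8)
The plan is to prove part~\ref{lem2i} in full and to obtain part~\ref{lem2ii} by the mirror-image argument. Throughout I use the two structural facts about ice models: adjacent vertices share an edge, so $M_{p,q}^\rightarrow = M_{p,q+1}^\leftarrow$ and $M_{p,q}^\uparrow = M_{p+1,q}^\downarrow$, and each vertex realizes one of the five configurations of Figure~\ref{validvertices}. I also use the bracket-matching reading of $\sigma^{\red}_i$: thinking of $\leftmoon$ as an opening and $\rightmoon$ as a closing symbol, the reduction deletes matched pairs, so the \emph{first} $\leftmoon$ is the leftmost unmatched opening symbol and the \emph{last} $\rightmoon$ the rightmost unmatched closing symbol; in particular each of these survives the reduction. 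Write $u=(i,q)$. Since $u\in\beta(M)$ it is of type~2, so $M_{i,q}^\uparrow=M_{i,q}^\downarrow=+$ and $M_{i,q}^\leftarrow=M_{i,q}^\rightarrow=-$, and by Lemma~\ref{lemma_noboxin0} we have $q\ge 2$, so the neighbours $(i,q-1)$, $(i+1,q-1)$, $(i+1,q)$ all exist. First, $(i+1,q)$ cannot be of type~2: otherwise it would be a box in row $i+1$, hence a $\rightmoon$ immediately following the $\leftmoon$ of $u$ in the order $\preceq$, and this pair would cancel, contradicting the survival of $u$. As $(i+1,q)$ has bottom edge $M_{i,q}^\uparrow=+$, it is therefore of type~1 or~4, both of which have left edge $+$; hence $M_{i+1,q-1}^\rightarrow=+$.

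The crux is to show that $(i,q-1)$ is not a box. Suppose it were; then its top edge is $+$, forcing the bottom edge of $(i+1,q-1)$ to be $+$, while its right edge is $+$ by the previous step, so $(i+1,q-1)$ would be forced to type~1 and hence would not be a box. Then the only candidate for a box strictly between $(i,q-1)$ and $u$ in the order $\preceq$ is excluded, so the $\leftmoon$ of $(i,q-1)$ is the opening symbol immediately preceding that of $u$. A one-line stack argument shows that an opening symbol immediately preceding an unmatched opening symbol is itself unmatched; thus $(i,q-1)$ would be an unmatched $\leftmoon$ strictly to the left of $u$, contradicting that $u$ is the \emph{first} $\leftmoon$. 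Hence $(i,q-1)$ has right edge $-$ and is of type~4 or~5, so its top edge is $-$ and its left and bottom edges agree. Consequently the bottom edge of $(i+1,q-1)$ is $-$, which together with its right edge $+$ forces $(i+1,q-1)$ to be of type~3. Reading off all edges now reproduces Figure~\ref{phipic}, the free signs being $b$ (the common top/right edge of $(i+1,q)$) and $g$ (the common left/bottom edge of $(i,q-1)$).

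For the ``furthermore'' claim I compare the two sides of Figure~\ref{prop2fig2b} cell by cell. The four altered vertices $(i,q),(i,q-1),(i+1,q-1),(i+1,q)$ become of types $4$, $1$ or $3$, $2$, and $3$ or $5$ respectively, so all remain valid; the four interior edges of the $2\times 2$ block agree across their shared sides; and every edge on the outer border of the block --- in particular $g$, $b$, and the bottom and right edges of $u$ --- is left unchanged, so all vertices outside the block, and all four boundary conditions of Definition~\ref{def_boundary}, are undisturbed. Here one uses that the altered edges lie strictly between rows $i,i+1$ and between columns $q-1,q$, with $q-1\ge 1$, so none of them is a boundary edge. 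Hence applying $\tf_i$ again yields an element of $\cM(\lambda)$.

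Part~\ref{lem2ii} is proved by the same scheme applied to $v=(i+1,q)$, now inspecting the block in columns $q,q+1$: one rules out a box at $(i,q)$ directly below $v$ (it would cancel $v$), and then, as the crux, a box at $(i+1,q+1)$ (it would force $(i,q+1)$ to type~1 and make $(i+1,q+1)$ an unmatched $\rightmoon$ to the right of $v$, contradicting that $v$ is the \emph{last} $\rightmoon$); the remaining edges then reproduce Figure~\ref{eipic}, and the local change in Figure~\ref{prop2fig3a} is checked to give an element of $\cM(\lambda)$ exactly as above. The one extra point is that the column $q+1$ must exist: if $v$ lay in the last column $q=\lambda_1+n$, then the boundary edge $M_{i+1,q}^\rightarrow=-$ together with $M_{i+1,q}^\downarrow=+$ would force $(i,q)$ to be of type~2, i.e.\ a box immediately preceding $v$ as $\leftmoon\rightmoon$, again cancelling $v$; so in fact $q<\lambda_1+n$. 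The main obstacle is the crux step in each part --- ruling out the horizontally adjacent box $(i,q-1)$ in part~\ref{lem2i} and $(i+1,q+1)$ in part~\ref{lem2ii} --- since this is the only place where the full hypothesis (that $u$ is the \emph{first} $\leftmoon$, resp.\ $v$ the \emph{last} $\rightmoon$, rather than merely a surviving one) is used, and it requires first invoking the five-vertex rules to pin down the diagonal vertex before the signature contradiction can be drawn.
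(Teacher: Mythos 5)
Your proposal is correct and follows essentially the same approach as the paper: a local case analysis of the $2\times 2$ block around the box, combining the five admissible vertex types with the bracket-matching properties of the $i$-signature, followed by a cell-by-cell check that the local change of Figure~\ref{prop2fig2b} (resp.\ Figure~\ref{prop2fig3a}) yields valid vertices with the perimeter, and hence the boundary conditions, unchanged. The differences are purely organizational — you rule out boxes at $(i+1,q)$ and $(i,q-1)$ directly where the paper cases on the edge signs $f$ and $a$, and you get part~\ref{lem2ii} by mirroring rather than repeating the argument — and you even make explicit a point the paper leaves implicit (that in part~\ref{lem2ii} the column $q+1$ exists), modulo a harmless index slip there: the relevant boundary edge is $M_{i,q}^\rightarrow=-$, not $M_{i+1,q}^\rightarrow$.
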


\begin{proof}
  Let $M\in\cM(\lambda)$ and $\sigma^\red_i \left( M \right) = a_1 a_2\dotsc a_p b_1 b_2\dotsc b_q$ where $a_j = \rightmoon$ and $b_j = \leftmoon$ for $p,q\in\mathbb{N}$.

  \smallskip

  \noindent\emph{Proof of \ref{lem2i}:}

  Suppose $q\geq 1$ and $b_1$ corresponds to a box $u \in \beta \left( M \right)$. Locally, we have a configuration as depicted in Figure~\ref{phipic0}.

  If $f=+$, then the only valid configuration for the bottom-left vertex is type 2 (i.e.\ a box). Since the $\leftmoon$ corresponding to $f$ is not in $\sigma^\red_i(M)$, it must be cancelled through the top-left vertex being a box as well. But if this is the case, $d=-$ which implies that the top-right vertex is also a box. If this were the case, the $\leftmoon$ corresponding to $u$ would be cancelled in $\sigma^\red_i(M)$, a contradiction. Hence $f = -$.

  If $a=-$, the only valid vertex configuration for the top-left vertex is type 5. As above, the top-right vertex would then be a box and the same contradiction follows.  Hence $a=+$, which implies that the top-left vertex is of type 3, implying that the top-right vertex is of type 1 or 4.  In addition the bottom-left vertex is of type 4 or 5.  In particular, $b=e$ and $g=h$, being of either sign.  Thus, $M$ is locally as in Figure \ref{phipic}.

  The statement concerning a local change of $M$ is clear since one can directly verify that the vertices in Figure~\ref{prop2fig2b} have valid configurations and the signs around the perimeter are unchanged.

  \smallskip

  \noindent\emph{Proof of \ref{lem2ii}:}

  Suppose $p\geq 1$ and $a_p$ corresponds to a box $v \in \beta(M)$. Locally, we have the configuration as depicted in Figure \ref{eipi0}.

  If $e=-$, then the bottom-left vertex is of type 2 (i.e.\ a box).  This implies that $\leftmoon\rightmoon$ occurs in $\sigma_i(M)$, the $\rightmoon$ associated with $v$, a contradiction.  Hence $e=+$, immediately implying the bottom-right vertex is of type 1 or 4.  Hence $h=+$.

  If $c=+$, then the top-right vertex is a box and $f=+$.  Since the bottom two vertices are not boxes, we encounter a contradiction since, if the top-right vertex were a box, it would mean that the associated $\rightmoon$ would necessarily cancel with a $\leftmoon$ resulting from either the bottom-left or right vertices, otherwise $v$ does not correspond to $a_p$ in $\sigma^\red_i(M)$. Hence $c=f=-$.  As in the proof of part~\ref{lem2i}, we have $a=b$ and $d=g$, being of either sign.  Thus, $M$ is locally as in Figure~\ref{eipic}.

  Again, the statement concerning a local change of $M$ is clear since one can directly verify that the vertices in Figure~\ref{prop2fig3a} have valid configurations and the signs around the perimeter are unchanged.
\end{proof}

We are now able to define the crystal operators $\te_i$ and $\tf_j$ on $M(\lambda)$.

\begin{mydef}[Crystal operators $\te_i$, $\tf_i$] \label{def_fiei}
  Suppose $M\in\cM(\lambda)$ and $i \in I$.  We define the \emph{crystal operators}
  \[
    \tf_i, \te_i \colon \cM(\lambda) \to \cM(\lambda) \cup \{0\}
  \]
  as follows.
  \begin{enumerate}
    \item If $\leftmoon$ does not occur in $\sigma^\red_i(M)$, then $\tf_i(M) = 0$.  Otherwise, we let $u \in \beta(M)$ be the vertex corresponding to the first $\leftmoon$ in $\sigma_i(M)$ and define
      \[
        \tf_i(M) = N\in\cM(\lambda),
      \]
      where $N$ is the ice model resulting from the local change of Figure~\ref{prop2fig2b}.

    \item If $\rightmoon$ does not occur in $\sigma^\red_i(M)$, then $\te_{i}(M) = 0$.  Otherwise we let $v \in \beta(M)$ be the vertex corresponding to the last $\rightmoon$ in $\sigma_i^\red(M)$ and define
        \[
          \te_i(M) = N\in\cM(\lambda),
        \]
        where $N$ is the ice model resulting from the local change of Figure~\ref{prop2fig3a}.
  \end{enumerate}
\end{mydef}

\begin{mylem} \label{lem_boxmoved}
  \begin{enumerate}
    \item If $u=(i,q) \in \beta(M)$ is the vertex corresponding to the first $\leftmoon$ in $\sigma^\red_i(M)$, then
      \[
        \beta(\tf_i(M)) = \big( \beta(M)\cup\{(i+1,q-1)\} \big) \setminus \{(i,q)\}.
      \]
    \item If $v=(i+1,q) \in \beta(M)$ is the vertex corresponding to the last $\rightmoon$ in $\sigma^\red_i(M)$, then
      \[
        \beta(\te_i(M)) = \big( \beta(M)\cup\{(i,q+1)\} \big) \setminus \{(i+1,q)\}.
      \]
  \end{enumerate}
\end{mylem}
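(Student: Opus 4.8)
The plan is to treat the claim as pure bookkeeping on the box set under the explicit local move defining the crystal operators, exploiting the fact that ``being a box'' is a purely local (type~2) condition. First I would fix $M\in\cM(\lambda)$ together with the box $u=(i,q)$ corresponding to the first $\leftmoon$ in $\sigma^\red_i(M)$, as in part~(1). By Definition~\ref{def_fiei}, $\tf_i(M)$ is the ice model $N$ obtained from $M$ by the local change of Figure~\ref{prop2fig2b}, which is supported on the $2\times2$ block of vertices occupying rows $i,i+1$ and columns $q-1,q$. Here the row assignment is forced by the $i$-signature convention ($\leftmoon\leftrightarrow$ row $i$), and the shift to column $q-1$ is read directly off the figure. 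Since $\beta$ records exactly the type~2 vertices, it suffices to determine, vertex by vertex, which of the finitely many vertices whose adjacent edges are altered change their type.

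The key observation, which I would extract from Lemma~\ref{lem_localchange}, is that the move of Figure~\ref{prop2fig2b} flips only the four edges interior to the $2\times2$ block and leaves every perimeter edge unchanged. Consequently every vertex outside the block retains all four of its edge signs, hence retains its type and its box-status, so $\beta(N)$ can differ from $\beta(M)$ only at the four block vertices. This reduces the lemma to a finite inspection, and it is the one point requiring care: I must check not only the two vertices whose box-status visibly changes, but also the two ``spectator'' vertices, and do so for every admissible choice of the undetermined signs left open by Lemma~\ref{lem_localchange}.

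Carrying out the inspection from Figure~\ref{prop2fig2b}: the vertex $u=(i,q)$ (bottom-right of the block), a type~2 box in $M$, becomes type~4 in $N$, so it leaves $\beta$; the vertex $(i+1,q-1)$ (top-left), type~3 in $M$, becomes type~2 in $N$, so it enters $\beta$; and the two remaining vertices $(i,q-1)$ and $(i+1,q)$ are, for each admissible value of the free signs $b$ and $g$, of type in $\{1,3,4,5\}$ both before and after the move, hence are boxes in neither $M$ nor $N$. This yields $\beta(\tf_i(M))=\big(\beta(M)\cup\{(i+1,q-1)\}\big)\setminus\{(i,q)\}$, which is part~(1). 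Part~(2) is entirely symmetric: with $v=(i+1,q)$ the last $\rightmoon$ in $\sigma^\red_i(M)$, one reads off Figure~\ref{prop2fig3a} that $v$ becomes type~4 while $(i,q+1)$ becomes type~2, the two spectator vertices again being boxes in neither model, giving $\beta(\te_i(M))=\big(\beta(M)\cup\{(i,q+1)\}\big)\setminus\{(i+1,q)\}$. The only genuine obstacle is the spectator check, but the perimeter-invariance furnished by Lemma~\ref{lem_localchange} confines it to the two free signs, making it a short finite verification.
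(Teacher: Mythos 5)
Your argument is correct and is essentially the paper's own proof: the paper disposes of the lemma in one line by citing Definition~\ref{def_fiei} and Figure~\ref{fig_localfi}, and your write-up simply makes explicit what that citation suppresses, namely that the local move alters only the four interior edges of the $2\times 2$ block (so no vertex outside the block can change type) together with the finite case-check of the four block vertices over the free signs $b$ and $g$. One trivial slip worth noting: in part (2) the vertex $v$ becomes type~3 after the move, not type~4 (type~4 is what $u$ becomes in part~(1); the two moves are mutually inverse, which is why the roles of types 3 and 4 swap), but this is immaterial since all that matters is that $v$ ceases to be of type~2.
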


\begin{proof}
  This follows immediately from Definition~\ref{def_fiei} and Figure~\ref{fig_localfi}.
\end{proof}

As a result of Lemma~\ref{lem_boxmoved}, any application of the crystal operators to an ice model $M$---assuming it is nonzero---preserve the cardinality of $\beta(M)$.  Because of this, we say that the box in $M$ that has been removed after application of a crystal operator has \emph{moved} to the newly added box.  If the $\leftmoon$ or $\rightmoon$ corresponding to a box $b\in \beta(M)$ survives in $\sigma^\red_i(M)$ (i.e.\ is not cancelled) for some $i \in I$, then we say that it is \emph{$i$-movable}.  We also say that the box itself is $i$-movable.

The application of a crystal operators can be thought of as a way of locally ``moving the boxes'' of a boxed ice model.  In particular, the $\te_i$ move boxes down and to the right, while the $\tf_i$ move boxes up and to the left.

\begin{myex}
  Consider the boxed ice model $M$ on the left-hand side of Figure~\ref{ex3}.  Note that  $\sigma_1(M) = \sigma^\red_1(M) = \rightmoon \; \leftmoon$, hence $\te_1(M) \neq 0$.  Applying $\te_1$ moves the box in the second row down and to the right.  On the other hand, $\sigma_2(M) = \leftmoon \rightmoon$, and so $\sigma^\red_2(M)$ is then simply the empty word.  Thus $\tf_2(M) = \te_2(M) = 0$.
  \begin{figure}[!h]
    \begin{center}
      \begin{tikzpicture}[anchorbase]
        \draw [dashed, thin] (-3,-1) -- (2,-1);
        \draw [dashed, thin] (-3,0) -- (2,0);
        \draw [dashed, thin] (-3,1) -- (2,1);
        \draw [dashed, thin] (-2.5,-1.5) -- (-2.5,1.5);
        \draw [dashed, thin] (-1.5,-1.5) -- (-1.5,1.5);
        \draw [dashed, thin] (-0.5,-1.5) -- (-0.5,1.5);
        \draw [dashed, thin] (0.5,-1.5) -- (0.5,1.5);
        \draw [dashed, thin] (1.5,-1.5) -- (1.5,1.5);
        \draw [fill=white,ultra thin] (-3,-1) circle [radius=0.2];
        \node at (-3,-1) {$ + $};
        \draw [fill=white,ultra thin] (-3,0) circle [radius=0.2];
        \node at (-3,0) {$ + $};
        \draw [fill=white,ultra thin] (-3,1) circle [radius=0.2];
        \node at (-3,1) {$ + $};
        \draw [fill=white,ultra thin] (-2,-1) circle [radius=0.2];
        \node at (-2,-1) {$ + $};
        \draw [fill=white,ultra thin] (-2,0) circle [radius=0.2];
        \node at (-2,0) {$ + $};
        \draw [fill=white,ultra thin] (-2,1) circle [radius=0.2];
        \node at (-2,1) {$ - $};
        \draw [fill=white,ultra thin] (-1,-1) circle [radius=0.2];
        \node at (-1,-1) {$ + $};
        \draw [fill=white,ultra thin] (-1,0) circle [radius=0.2];
        \node at (-1,0) {$ - $};
        \draw [fill=white,ultra thin] (-1,1) circle [radius=0.2];
        \node at (-1,1) {$ + $};
        \draw [fill=white,ultra thin] (0,-1) circle [radius=0.2];
        \node at (0,-1) {$ + $};
        \draw [fill=white,ultra thin] (0,0) circle [radius=0.2];
        \node at (0,0) {$ - $};
        \draw [fill=white,ultra thin] (0,1) circle [radius=0.2];
        \node at (0,1) {$ - $};
        \draw [fill=white,ultra thin] (1,-1) circle [radius=0.2];
        \node at (1,-1) {$ - $};
        \draw [fill=white,ultra thin] (1,0) circle [radius=0.2];
        \node at (1,0) {$ + $};
        \draw [fill=white,ultra thin] (1,1) circle [radius=0.2];
        \node at (1,1) {$ - $};
        \draw [fill=white,ultra thin] (2,-1) circle [radius=0.2];
        \node at (2,-1) {$ - $};
        \draw [fill=white,ultra thin] (2,0) circle [radius=0.2];
        \node at (2,0) {$ - $};
        \draw [fill=white,ultra thin] (2,1) circle [radius=0.2];
        \node at (2,1) {$ - $};
        \draw [fill=white,ultra thin] (-2.5,-1.5) circle [radius=0.2];
        \node at (-2.5,-1.5) {$ + $};
        \draw [fill=white,ultra thin] (-2.5,-0.5) circle [radius=0.2];
        \node at (-2.5,-0.5) {$ + $};
        \draw [fill=white,ultra thin] (-2.5,0.5) circle [radius=0.2];
        \node at (-2.5,0.5) {$ + $};
        \draw [fill=white,ultra thin] (-2.5,1.5) circle [radius=0.2];
        \node at (-2.5,1.5) {$ - $};
        \draw [fill=white,ultra thin] (-1.5,-1.5) circle [radius=0.2];
        \node at (-1.5,-1.5) {$ + $};
        \draw [fill=white,ultra thin] (-1.5,-0.5) circle [radius=0.2];
        \node at (-1.5,-0.5) {$ + $};
        \draw [fill=white,ultra thin] (-1.5,0.5) circle [radius=0.2];
        \node at (-1.5,0.5) {$ - $};
        \draw [fill=white,ultra thin] (-1.5,1.5) circle [radius=0.2];
        \node at (-1.5,1.5) {$ + $};
        \draw [fill=white,ultra thin] (-0.5,-1.5) circle [radius=0.2];
        \node at (-0.5,-1.5) {$ + $};
        \draw [fill=white,ultra thin] (-0.5,-0.5) circle [radius=0.2];
        \node at (-0.5,-0.5) {$ + $};
        \draw [fill=white,ultra thin] (-0.5,0.5) circle [radius=0.2];
        \node at (-0.5,0.5) {$ + $};
        \draw [fill=white,ultra thin] (-0.5,1.5) circle [radius=0.2];
        \node at (-0.5,1.5) {$ - $};
        \draw [fill=white,ultra thin] (0.5,-1.5) circle [radius=0.2];
        \node at (0.5,-1.5) {$ + $};
        \draw [fill=white,ultra thin] (0.5,-0.5) circle [radius=0.2];
        \node at (0.5,-0.5) {$ - $};
        \draw [fill=white,ultra thin] (0.5,0.5) circle [radius=0.2];
        \node at (0.5,0.5) {$ + $};
        \draw [fill=white,ultra thin] (0.5,1.5) circle [radius=0.2];
        \node at (0.5,1.5) {$ + $};
        \draw [fill=white,ultra thin] (1.5,-1.5) circle [radius=0.2];
        \node at (1.5,-1.5) {$ + $};
        \draw [fill=white,ultra thin] (1.5,-0.5) circle [radius=0.2];
        \node at (1.5,-0.5) {$ + $};
        \draw [fill=white,ultra thin] (1.5,0.5) circle [radius=0.2];
        \node at (1.5,0.5) {$ - $};
        \draw [fill=white,ultra thin] (1.5,1.5) circle [radius=0.2];
        \node at (1.5,1.5) {$ - $};
        \node at (-0.5,0) [rectangle,draw,fill=white] {$+$};
        \node at (0.5,1) [rectangle,draw,fill=white]  {$+$};
        \node at (1.5,-1) [rectangle,draw,fill=white] {$+$};
        \draw [fill=gray] (-2.5,-1) circle[radius=0.1];
        \draw [fill=gray] (-2.5,0) circle[radius=0.1];
        \draw [fill=gray] (-2.5,1) circle[radius=0.1];
        \draw [fill=gray] (-1.5,-1) circle[radius=0.1];
        \draw [fill=gray] (-1.5,0) circle[radius=0.1];
        \draw [fill=gray] (-1.5,1) circle[radius=0.1];
        \draw [fill=gray] (-0.5,-1) circle[radius=0.1];
        \draw [fill=gray] (-0.5,0) circle[radius=0.1];
        \draw [fill=gray] (-0.5,1) circle[radius=0.1];
        \draw [fill=gray] (0.5,-1) circle[radius=0.1];
        \draw [fill=gray] (0.5,0) circle[radius=0.1];
        \draw [fill=gray] (0.5,1) circle[radius=0.1];
        \draw [fill=gray] (1.5,-1) circle[radius=0.1];
        \draw [fill=gray] (1.5,0) circle[radius=0.1];
        \draw [fill=gray] (1.5,1) circle[radius=0.1];
      \end{tikzpicture}
      \quad $\xmapsto{\te_1}$ \quad
 			\begin{tikzpicture}[anchorbase]
        \draw [dashed, thin] (-3,-1) -- (2,-1);
        \draw [dashed, thin] (-3,0) -- (2,0);
        \draw [dashed, thin] (-3,1) -- (2,1);
        \draw [dashed, thin] (-2.5,-1.5) -- (-2.5,1.5);
        \draw [dashed, thin] (-1.5,-1.5) -- (-1.5,1.5);
        \draw [dashed, thin] (-0.5,-1.5) -- (-0.5,1.5);
        \draw [dashed, thin] (0.5,-1.5) -- (0.5,1.5);
        \draw [dashed, thin] (1.5,-1.5) -- (1.5,1.5);
        \draw [fill=white,ultra thin] (-3,-1) circle [radius=0.2];
        \node at (-3,-1) {$ + $};
        \draw [fill=white,ultra thin] (-3,0) circle [radius=0.2];
        \node at (-3,0) {$ + $};
        \draw [fill=white,ultra thin] (-3,1) circle [radius=0.2];
        \node at (-3,1) {$ + $};
        \draw [fill=white,ultra thin] (-2,-1) circle [radius=0.2];
        \node at (-2,-1) {$ + $};
        \draw [fill=white,ultra thin] (-2,0) circle [radius=0.2];
        \node at (-2,0) {$ + $};
        \draw [fill=white,ultra thin] (-2,1) circle [radius=0.2];
        \node at (-2,1) {$ - $};
        \draw [fill=white,ultra thin] (-1,-1) circle [radius=0.2];
        \node at (-1,-1) {$ + $};
        \draw [fill=white,ultra thin] (-1,0) circle [radius=0.2];
        \node at (-1,0) {$ - $};
        \draw [fill=white,ultra thin] (-1,1) circle [radius=0.2];
        \node at (-1,1) {$ + $};
        \draw [fill=white,ultra thin] (0,-1) circle [radius=0.2];
        \node at (0,-1) {$ - $};
        \draw [fill=white,ultra thin] (0,0) circle [radius=0.2];
        \node at (0,0) {$ + $};
        \draw [fill=white,ultra thin] (0,1) circle [radius=0.2];
        \node at (0,1) {$ - $};
        \draw [fill=white,ultra thin] (1,-1) circle [radius=0.2];
        \node at (1,-1) {$ - $};
        \draw [fill=white,ultra thin] (1,0) circle [radius=0.2];
        \node at (1,0) {$ + $};
        \draw [fill=white,ultra thin] (1,1) circle [radius=0.2];
        \node at (1,1) {$ - $};
        \draw [fill=white,ultra thin] (2,-1) circle [radius=0.2];
        \node at (2,-1) {$ - $};
        \draw [fill=white,ultra thin] (2,0) circle [radius=0.2];
        \node at (2,0) {$ - $};
        \draw [fill=white,ultra thin] (2,1) circle [radius=0.2];
        \node at (2,1) {$ - $};
        \draw [fill=white,ultra thin] (-2.5,-1.5) circle [radius=0.2];
        \node at (-2.5,-1.5) {$ + $};
        \draw [fill=white,ultra thin] (-2.5,-0.5) circle [radius=0.2];
        \node at (-2.5,-0.5) {$ + $};
        \draw [fill=white,ultra thin] (-2.5,0.5) circle [radius=0.2];
        \node at (-2.5,0.5) {$ + $};
        \draw [fill=white,ultra thin] (-2.5,1.5) circle [radius=0.2];
        \node at (-2.5,1.5) {$ - $};
        \draw [fill=white,ultra thin] (-1.5,-1.5) circle [radius=0.2];
        \node at (-1.5,-1.5) {$ + $};
        \draw [fill=white,ultra thin] (-1.5,-0.5) circle [radius=0.2];
        \node at (-1.5,-0.5) {$ + $};
        \draw [fill=white,ultra thin] (-1.5,0.5) circle [radius=0.2];
        \node at (-1.5,0.5) {$ - $};
        \draw [fill=white,ultra thin] (-1.5,1.5) circle [radius=0.2];
        \node at (-1.5,1.5) {$ + $};
        \draw [fill=white,ultra thin] (-0.5,-1.5) circle [radius=0.2];
        \node at (-0.5,-1.5) {$ + $};
        \draw [fill=white,ultra thin] (-0.5,-0.5) circle [radius=0.2];
        \node at (-0.5,-0.5) {$ - $};
        \draw [fill=white,ultra thin] (-0.5,0.5) circle [radius=0.2];
        \node at (-0.5,0.5) {$ + $};
        \draw [fill=white,ultra thin] (-0.5,1.5) circle [radius=0.2];
        \node at (-0.5,1.5) {$ - $};
        \draw [fill=white,ultra thin] (0.5,-1.5) circle [radius=0.2];
        \node at (0.5,-1.5) {$ + $};
        \draw [fill=white,ultra thin] (0.5,-0.5) circle [radius=0.2];
        \node at (0.5,-0.5) {$ + $};
        \draw [fill=white,ultra thin] (0.5,0.5) circle [radius=0.2];
        \node at (0.5,0.5) {$ + $};
        \draw [fill=white,ultra thin] (0.5,1.5) circle [radius=0.2];
        \node at (0.5,1.5) {$ + $};
        \draw [fill=white,ultra thin] (1.5,-1.5) circle [radius=0.2];
        \node at (1.5,-1.5) {$ + $};
        \draw [fill=white,ultra thin] (1.5,-0.5) circle [radius=0.2];
        \node at (1.5,-0.5) {$ + $};
        \draw [fill=white,ultra thin] (1.5,0.5) circle [radius=0.2];
        \node at (1.5,0.5) {$ - $};
        \draw [fill=white,ultra thin] (1.5,1.5) circle [radius=0.2];
        \node at (1.5,1.5) {$ - $};
        \node at (0.5,-1) [rectangle,draw,fill=white] {$+$};
        \node at (1.5,-1) [rectangle,draw,fill=white] {$+$};
        \node at (0.5,1) [rectangle,draw,fill=white] {$+$};
        \draw [fill=gray] (-2.5,-1) circle[radius=0.1];
        \draw [fill=gray] (-2.5,0) circle[radius=0.1];
        \draw [fill=gray] (-2.5,1) circle[radius=0.1];
        \draw [fill=gray] (-1.5,-1) circle[radius=0.1];
        \draw [fill=gray] (-1.5,0) circle[radius=0.1];
        \draw [fill=gray] (-1.5,1) circle[radius=0.1];
        \draw [fill=gray] (-0.5,-1) circle[radius=0.1];
        \draw [fill=gray] (-0.5,0) circle[radius=0.1];
        \draw [fill=gray] (-0.5,1) circle[radius=0.1];
        \draw [fill=gray] (0.5,-1) circle[radius=0.1];
        \draw [fill=gray] (0.5,0) circle[radius=0.1];
        \draw [fill=gray] (0.5,1) circle[radius=0.1];
        \draw [fill=gray] (1.5,-1) circle[radius=0.1];
        \draw [fill=gray] (1.5,0) circle[radius=0.1];
        \draw [fill=gray] (1.5,1) circle[radius=0.1];
      \end{tikzpicture}
      \caption{The application of the crystal operator $\te_1$ moves the position of a box.\label{ex3}}
    \end{center}
  \end{figure}
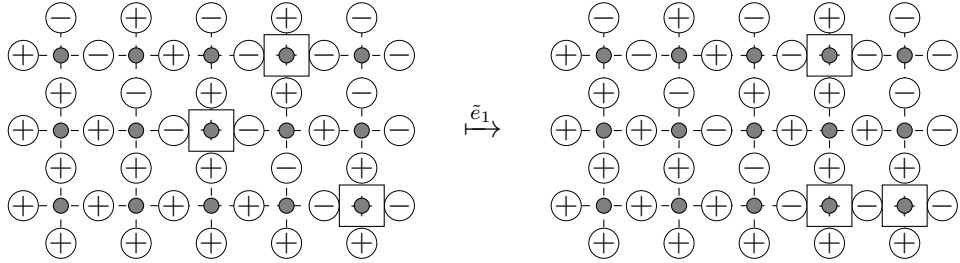
\end{myex}

\begin{myrem} \label{rem:finite-crystal-operator}
  It follows immediately from the above discussion that one can only apply the operators $\te_i$ (respectively, $\tf_i$) a finite number of times before obtaining zero.
\end{myrem}

\begin{mydef}[Weight function $\wt$] \label{def_wt}
  We define the \emph{weight function}
  \[
    \wt \colon \cM(\lambda) \to P,\qquad
    \wt(M) = \sum_{i = 1}^n a^{M}_i\epsilon_i,\quad
    a^{M}_i = |\{(p,q)\in\beta(M) \mid p = i\}|.
  \]
\end{mydef}

\subsection{Crystal structure: verification}

We now verify that the maps defined in Section~\ref{subsec:cystal-structure-def} do indeed endow $\cM(\lambda)$ with the structure of a crystal.

\begin{mylem} \label{lem_eifiswitch}
  Let $M \in \cM(\lambda)$ and $i \in I$.  If $\tf_i$ moves $u \in \beta(M)$ to $v \in \beta(\tf_iM)$, then $\te_i$ moves $v$ to $u \in \beta(\te_i\tf_i M)$.  Similarly, if $\te_i$ moves $u \in \beta(M)$ to $v \in \beta(\te_i M)$, then $\tf_i$ moves $v$ to $u \in \beta(\tf_i \te_i M)$.
\end{mylem}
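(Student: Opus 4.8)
The plan is to exploit the fact that the two local moves defining $\tf_i$ and $\te_i$ are mutually inverse, so that the whole lemma reduces to checking that the operator applied \emph{second} in each composition really does act on the box that the first operator just created. I would treat the first statement in detail and obtain the second by symmetry.

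Suppose $\tf_i$ moves $u=(i,q)$ to $v=(i+1,q-1)$, and set $N=\tf_i(M)$. By Definition~\ref{def_fiei} and Lemma~\ref{lem_localchange}\ref{lem2i}, $u$ is the box of the first $\leftmoon$ of $\sigma^\red_i(M)$, and near the four vertices $(i,q-1),(i,q),(i+1,q-1),(i+1,q)$ the models $M$ and $N$ look like the left and right configurations of Figure~\ref{prop2fig2b}. The central observation is a direct, figure-by-figure comparison: the right-hand configuration of Figure~\ref{prop2fig2b} coincides with the left-hand configuration of Figure~\ref{prop2fig3a}, and the right-hand configuration of Figure~\ref{prop2fig3a} coincides with the left-hand configuration of Figure~\ref{prop2fig2b}. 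Hence, \emph{once we know} that $\te_i$ acts on the box $v$ of $N$, its local change (Figure~\ref{prop2fig3a}) returns $N$ precisely to $M$, and by Lemma~\ref{lem_boxmoved} it sends $v=(i+1,q-1)$ back to $(i,q)=u$.

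The substance of the argument is therefore to verify that $\te_i$ really operates on $v$, that is, that $v$ corresponds to the last $\rightmoon$ of $\sigma^\red_i(N)$. For this I would track the $i$-signature through the move. Reading off Figure~\ref{prop2fig2b}, none of $(i,q-1),(i+1,q-1),(i+1,q)$ is a box in $M$, so among rows $i,i+1$ the column $q-1$ contributes nothing and the column $q$ contributes only the $\leftmoon$ of $u$; after the move, column $q-1$ contributes only the $\rightmoon$ of $v$ and column $q$ contributes nothing, while every other column is untouched. Since columns $q-1$ and $q$ are adjacent in the lexicographic order of Definition~\ref{def_isig}, this means $\sigma_i(M)=P\,\leftmoon\,S$ and $\sigma_i(N)=P\,\rightmoon\,S$ for a common prefix $P$ and suffix $S$. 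Because $u$ is the first $\leftmoon$ surviving reduction, an unmatched $\leftmoon$ in $P$ would precede $u$ and an unmatched $\rightmoon$ in $S$ would cancel $u$, so $\red(P)=\rightmoon^{m}$ and $\red(S)=\leftmoon^{n-1}$. Reducing in stages then gives $\sigma^\red_i(N)=\red(\rightmoon^{m}\,\rightmoon\,\leftmoon^{n-1})=\rightmoon^{m+1}\,\leftmoon^{n-1}$, in which the $\rightmoon$ coming from $v$ is exactly the last one. Thus $\te_i$ acts on $v$, and combined with the previous paragraph we conclude $\te_i\tf_iM=M$ with $\te_i$ sending $v$ to $u$.

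The second statement is entirely symmetric: if $\te_i$ moves $u=(i+1,q)$ to $v=(i,q+1)$, the same comparison of Figures~\ref{prop2fig2b} and \ref{prop2fig3a} shows that applying $\tf_i$ to the box $v$ restores $M$, while the analogous bookkeeping turns $\sigma^\red_i(M)=\rightmoon^{m}\leftmoon^{n}$ into $\sigma^\red_i(N)=\rightmoon^{m-1}\leftmoon^{n+1}$ with the $\leftmoon$ of $v$ now the first, so that $\tf_i$ does act on $v$. I expect the main obstacle to be precisely this signature bookkeeping---in particular, justifying that the local move alters $\sigma_i$ only by replacing a single $\leftmoon$ with a single $\rightmoon$ in place, which rests on reading off from the figures that the three neighbouring vertices are never boxes---whereas the inversion of the local pictures and the final reduction are routine.
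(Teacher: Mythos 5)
Your proof is correct, but it establishes the crucial fact --- that the $\rightmoon$ of $v$ survives as the \emph{last} $\rightmoon$ of $\sigma^\red_i(\tf_i M)$ --- by a genuinely different mechanism than the paper, which, like you, handles the second claim by symmetry. The paper never decomposes the signature word: it argues twice by contradiction, using only the order $\preceq$ and movability. First, if $v$ were not $i$-movable in $\tf_i M$, its $\rightmoon$ would be cancelled by the $\leftmoon$ of a row-$i$ box $w \preceq v \preceq u$ that was already $i$-movable in $M$, so $\tf_i$ would have moved $w$ rather than $u$. Second, if some $i$-movable box $w$ in row $i+1$ with $v \preceq w$ existed in $\tf_i M$, then already in $M$ its $\rightmoon$ would have cancelled the $\leftmoon$ of $u$, contradicting the choice of $u$. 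Your route is instead computational: after checking from Figures~\ref{prop2fig2b} and~\ref{prop2fig3a} that none of the three neighbouring vertices is ever a box, you record the move as an in-place substitution taking $\sigma_i(M)=P\,\leftmoon\,S$ to $\sigma_i(\tf_i M)=P\,\rightmoon\,S$, translate ``$u$ corresponds to the first surviving $\leftmoon$'' into the statement that $P$ reduces to $\rightmoon^{m}$ and $S$ to $\leftmoon^{n-1}$, and then reduce in stages. The cost is that reducing in stages silently uses confluence of the cancellation procedure, a point the paper never needs to touch; the gain is that you extract strictly more from the argument: the identity $\te_i\tf_i M = M$ on the nose (the mutual-inverse property the paper only invokes later, for \pref{C6} in Proposition~\ref{prop_crystal}), and the formula $\sigma^\red_i(\tf_i M)=\rightmoon^{m+1}\,\leftmoon^{n-1}$, which is precisely the content of Corollary~\ref{cor_defepsilonphi} that the paper derives from this lemma afterwards. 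In short, your version is longer but more self-contained and mechanical, with several downstream statements falling out for free, while the paper's is shorter and leaves the word-level bookkeeping implicit in the ordering argument.
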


\begin{proof}
  Assume that $\tf_i$ moves $u\in \beta(M)$ to $v \in \beta(\tf_i M)$.  Suppose, towards a contradiction, that $v$ is not $i$-movable.  This implies that there is a box $w \in \beta(\tf_iM)$ in row $i$ (which was $i$-movable in $M$) such that $w \preceq v$.  But we know $v \preceq u$ and thus $w\preceq u$.  Hence $\tf_i$ would have moved $w$ instead of $u$, a contradiction.  Hence $v$ is $i$-movable.

  Now suppose, towards a contradiction, that there exists an $i$-movable box $w \in \beta(\tf_iM)$ in row $i+1$ such that $v \preceq w$. But this means the $\rightmoon$ corresponding to $w$ cancels with the $\leftmoon$ corresponding to $u$ in $\sigma^\red_i(M)$, a contradiction.  Thus an application of $\te_i$ to $\tf_i(M)$ moves $v$ to $u$.

  The proof of the final statement of the lemma is analogous.
\end{proof}

\begin{mycor} \label{cor_defepsilonphi}
  For all $M \in \cM(\lambda)$ and $i \in I$, $\varepsilon_i(M)$ is the number of $\rightmoon$ that occur in $\sigma^\red_i(M)$, and $\varphi_i(M)$ is the number of $\leftmoon$.
\end{mycor}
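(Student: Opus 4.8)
The plan is to prove both statements by induction, after first pinning down precisely how a single application of a crystal operator changes the reduced $i$-signature. Since $\sigma^\red_i(M)$ always has the form $\rightmoon^a\leftmoon^b$, and since $\te_i(M)=0$ exactly when $a=0$ while $\tf_i(M)=0$ exactly when $b=0$ (by Definition~\ref{def_fiei}), it suffices to show that applying $\te_i$ when $a\ge 1$ produces an ice model with reduced signature $\rightmoon^{a-1}\leftmoon^{b+1}$, and that applying $\tf_i$ when $b\ge 1$ produces one with reduced signature $\rightmoon^{a+1}\leftmoon^{b-1}$. Granting this, the two counts drop by exactly one under the respective operators, and I would conclude using the elementary identity $\varepsilon_i(M)=1+\varepsilon_i(\te_i M)$ (valid whenever $\te_i M\ne 0$): induction on $a$ gives $\varepsilon_i(M)=a$, and the symmetric induction on $b$ gives $\varphi_i(M)=b$.

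The central step is to track the signature at the level of the underlying word. I would first record two facts about the box $v=(i+1,q)$ corresponding to the last $\rightmoon$: that $(i,q)\notin\beta(M)$ (otherwise the column-$q$ pair $\leftmoon\rightmoon$ would cancel $v$, as in the local analysis leading to Figure~\ref{eipic}), and that $(i,q+1)\notin\beta(M)$ (since by Lemma~\ref{lem_boxmoved} the operator $\te_i$ preserves $|\beta(M)|$, so the cell it adds must have been empty). Together these say that, in the lexicographic reading order on the cells of rows $i,i+1$, the position $(i+1,q)$ of $v$ and the position $(i,q+1)$ of the newly added box are consecutive, with no box lying between them. Hence, by Lemma~\ref{lem_boxmoved}, passing from $M$ to $\te_i M$ replaces the last unmatched $\rightmoon$ of the word $\sigma_i(M)$ by a $\leftmoon$ occupying that same slot, leaving every other letter unchanged. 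The mirror-image argument, using that $u=(i,q)$ is the first unmatched $\leftmoon$ and that its landing cell is $(i+1,q-1)$, shows that passing from $M$ to $\tf_i M$ replaces the first unmatched $\leftmoon$ by a $\rightmoon$ in place.

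It then remains to invoke the standard stability of bracket matching. Flipping the rightmost unmatched $\rightmoon$ to $\leftmoon$ cannot create a new cancellation, since the flipped letter has no unmatched $\rightmoon$ to its right to pair with; thus the count of unmatched $\rightmoon$'s falls from $a$ to $a-1$ and the count of unmatched $\leftmoon$'s rises from $b$ to $b+1$. Flipping the leftmost unmatched $\leftmoon$ to $\rightmoon$ behaves symmetrically. This yields exactly the signature changes claimed above, completing the inductive argument. The main obstacle I anticipate is the bookkeeping in the central step: one must check that a box moving by one full column nonetheless lands in the adjacent slot of the reading order, so that a genuinely nonlocal move amounts to an in-place change of a single letter of the signature. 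Once that is in hand, the remaining ingredients are either definitional or routine properties of the cancellation procedure.
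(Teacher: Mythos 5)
Your proof is correct, and it reaches the conclusion by a genuinely more self-contained route than the paper. The paper's own proof is a two-line deduction: it invokes Lemma~\ref{lem_eifiswitch} (the mutual-inverse property of box movements) to assert that $\sigma^\red_i(\tf_iM)=\rightmoon^{a+1}\leftmoon^{b-1}$ and $\sigma^\red_i(\te_iM)=\rightmoon^{a-1}\leftmoon^{b+1}$, and then says the result follows. You never touch Lemma~\ref{lem_eifiswitch}; instead you re-derive this signature-change rule directly from Lemma~\ref{lem_boxmoved}, by observing that the vacated cell $(i+1,q)$ and the filled cell $(i,q+1)$ are consecutive in the reading order with no cell (hence no box) between them, so the nonlocal box move is an in-place flip of a single letter of $\sigma_i(M)$, and then appealing to stability of bracket cancellation under flipping the extremal unmatched letter; finally you make the counting explicit via $\varepsilon_i(M)=1+\varepsilon_i(\te_iM)$ and induction. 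What this buys: the paper's citation of Lemma~\ref{lem_eifiswitch} really only controls the letter carried by the moved box, and the fact that the rest of the reduced word is undisturbed is left implicit there --- your in-place-flip argument is precisely the missing content, so your write-up is tighter logically, at the cost of redoing work the paper has embedded in Lemmas~\ref{lem_localchange} and~\ref{lem_eifiswitch}. One small repair: your justification that $(i,q+1)\notin\beta(M)$ by appeal to preservation of $|\beta(M)|$ is mildly circular, since that preservation claim is itself a gloss on Lemma~\ref{lem_boxmoved} which presupposes the landing cell is empty; the clean source is the local configuration already established in Lemma~\ref{lem_localchange} (in Figure~\ref{eipic} the vertex at $(i,q+1)$ has left edge $+$, and for the $\tf_i$ case the vertex at $(i+1,q-1)$ in Figure~\ref{phipic} has bottom edge $-$, so neither can be of type 2).
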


\begin{proof}
  Let $M\in\cM(\lambda)$ and $i \in I$. By Lemma \ref{lem_eifiswitch}, if $\sigma^\red_i(M) = \rightmoon^a  \; \leftmoon^b$, then
  \begin{gather*}
    \sigma^\red_i(\tf_iM) = \rightmoon^{a+1} \; \leftmoon^{b-1} \text{ if } \varphi_i \left( M \right) > 0, \quad \text{and} \\
    \sigma^\red_i(\te_iM) = \rightmoon^{a-1} \; \leftmoon^{b+1} \text{ if } \varepsilon_i \left( M \right) > 0.
  \end{gather*}
  The result follows.
\end{proof}

We can now prove that the definitions of this section endow $\cM(\lambda)$ with the structure of a crystal.

\begin{myprop} \label{prop_crystal}
  Fix a partition $\lambda$.  The set of ice models $\cM(\lambda)$, together with the maps $\te_i$, $\tf_i$, $\wt$, $\varepsilon_i$, $\varphi_i$, $i \in I$, of Definitions~\ref{def_fiei} and~\ref{def_wt} are a crystal in the sense of Definition~\ref{crystal}.
\end{myprop}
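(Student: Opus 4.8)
The plan is to verify each of the conditions \ref{C1}--\ref{C6} of Definition~\ref{crystal} in turn, drawing on the structural results already established. The bulk of the work has in fact been done: Lemma~\ref{lem_localchange} guarantees that $\te_i$ and $\tf_i$ are well defined as maps $\cM(\lambda) \to \cM(\lambda) \cup \{0\}$, Lemma~\ref{lem_boxmoved} records precisely how a box is relocated by each operator, and Corollary~\ref{cor_defepsilonphi} identifies $\varepsilon_i(M)$ and $\varphi_i(M)$ with the number of $\rightmoon$ and $\leftmoon$, respectively, surviving in $\sigma^\red_i(M)$. What remains is to assemble these facts.

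Conditions \ref{C2} and \ref{C3} are immediate from Lemma~\ref{lem_boxmoved} and Definition~\ref{def_wt}. When $\tf_i(M) \neq 0$, the operator moves a box from $(i,q)$ to $(i+1,q-1)$, so $a^{\tf_i M}_i = a^M_i - 1$ and $a^{\tf_i M}_{i+1} = a^M_{i+1} + 1$ while all other row counts are unchanged; hence $\wt(\tf_i M) = \wt(M) - \epsilon_i + \epsilon_{i+1} = \wt(M) - \alpha_i$, which is \ref{C3}. The computation for \ref{C2} is symmetric, using that $\te_i$ moves a box from $(i+1,q)$ to $(i,q+1)$.

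For \ref{C4} and \ref{C5}, I would invoke the description of the reduced signature worked out in the proof of Corollary~\ref{cor_defepsilonphi}: if $\sigma^\red_i(M) = \rightmoon^a\,\leftmoon^b$, then $\sigma^\red_i(\te_i M) = \rightmoon^{a-1}\,\leftmoon^{b+1}$ whenever $\varepsilon_i(M) > 0$ (i.e.\ $\te_i(M) \neq 0$), and $\sigma^\red_i(\tf_i M) = \rightmoon^{a+1}\,\leftmoon^{b-1}$ whenever $\varphi_i(M) > 0$. Reading off the exponents via Corollary~\ref{cor_defepsilonphi} gives $\varepsilon_i(\te_i M) = \varepsilon_i(M) - 1$ and $\varphi_i(\te_i M) = \varphi_i(M) + 1$, which is \ref{C4}, and the analogous reading gives \ref{C5}. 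Finally, \ref{C6} is exactly the content of Lemma~\ref{lem_eifiswitch}: if $\tf_i(M) = N \neq 0$ then $\te_i$ returns the moved box to its original position, so $\te_i(N) = M$, and the converse direction is the second statement of that lemma.

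The one step requiring a genuinely new (though short) computation is \ref{C1}. The key observation is that passing from $\sigma_i(M)$ to $\sigma^\red_i(M)$ deletes $\leftmoon\rightmoon$ subwords, each of which removes one $\leftmoon$ and one $\rightmoon$, so the quantity $(\#\,\leftmoon) - (\#\,\rightmoon)$ is invariant under reduction. In the full signature $\sigma_i(M)$ the number of $\leftmoon$ equals the number of boxes in row $i$, namely $a^M_i$, and the number of $\rightmoon$ equals $a^M_{i+1}$. Combining this with Corollary~\ref{cor_defepsilonphi} yields $\varphi_i(M) - \varepsilon_i(M) = a^M_i - a^M_{i+1}$. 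On the other hand, since $\epsilon_j(h_i) = \delta_{i,j} - \delta_{i+1,j}$, Definition~\ref{def_wt} gives $\langle h_i, \wt(M)\rangle = \sum_{j=1}^n a^M_j(\delta_{i,j} - \delta_{i+1,j}) = a^M_i - a^M_{i+1}$, and \ref{C1} follows. I expect this signature-count-to-weight bookkeeping to be the only nontrivial point; everything else is a direct appeal to the preceding lemmas.
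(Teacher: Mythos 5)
Your proposal is correct and follows essentially the same route as the paper's own proof: \pref{C2}--\pref{C3} from Lemma~\ref{lem_boxmoved}, \pref{C4}--\pref{C5} from Corollary~\ref{cor_defepsilonphi} together with Lemma~\ref{lem_eifiswitch}, \pref{C6} from the mutual inverseness of the local moves (which is exactly the content of Lemma~\ref{lem_eifiswitch}), and \pref{C1} by the same bookkeeping the paper uses, namely that cancelling $\leftmoon\rightmoon$ pairs leaves the difference of the counts unchanged, so $\varphi_i(M)-\varepsilon_i(M)=a^M_i-a^M_{i+1}=\langle h_i,\wt(M)\rangle$. No gaps to report.
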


\begin{proof}
  We verify the conditions of Definition~\ref{crystal}.

  \medskip

  \noindent \pref{C1}:  If $k$ is the number of pairs $\leftmoon \rightmoon$ that are cancelled in obtaining $\sigma^\red_i(M)$ from $\sigma^\red_i(M)$, we have
  \begin{multline*}
    \langle h_i,\wt(M)\rangle
    = \left\langle h_i,\sum_{j=1}^{n-1} a^M_j\epsilon_j \right\rangle
    = \sum_{j=1}^{n-1} a^M_j \left\langle h_i,\epsilon_j\right\rangle
    = \sum_{j=1}^{n-1} a^M_j(\delta_{i,j} - \delta_{i+1,j}) \\
    = a^M_i - a^M_{i+1}
    = \left( a^M_i - k \right) - \left( a^M_{i+1} - k \right)
    = \varphi_i(M) - \varepsilon_i(M),
  \end{multline*}
  where the final equality follows from Corollary~\ref{cor_defepsilonphi}.

  \medskip

  \noindent \pref{C2} \& (\ref{C3}):  These properties follow immediately from Lemma~\ref{lem_boxmoved} and the fact that $\alpha_i = \epsilon_i - \epsilon_{i+1}$.

  \medskip

  \noindent \pref{C4} \& \pref{C5}: These properties follow directly from Corollary~\ref{cor_defepsilonphi} and Lemma~\ref{lem_eifiswitch}.

  \medskip

  \noindent \pref{C6}: This follows immediately from the fact that the local changes of Figure~\ref{fig_localfi} are mutually inverse operations.
\end{proof}

We call $M(\lambda)$ the \emph{ice crystal} corresponding to the partition $\lambda$.

%
\section{Regularity of ice crystals} \label{sec_Reg}
%

The notion of a regular crystal was introduced by Stembridge in \cite{Stembridge}.  In this section we recall this definition in the special case of type $A$ (i.e.\ for $\fsl_n$), adjusting the notation slightly to match that of the current paper.  We will then show that the crystal $\cM(\lambda)$ defined in Section~\ref{sec_CrystalM} is regular.  This will be a key ingredient in the proof of our main result in Section~\ref{sec_CrystalM}.

Suppose we have an edge-coloured directed graph with underlying vertex set $\cB$, whose edges are coloured by elements from the set $I$.  Abusing notation, we will also let $\cB$ denote the directed graph.  We define $\te_i(b) = b'$ if there exists an $i$-coloured edge $b\leftarrow b'$, and dually define $\tf_i(b) = b'$ if there is an $i$-coloured edge $b\rightarrow b'$.  We define $\varepsilon_i$ and $\varphi_i$ as in \eqref{eq:varepsilon_i-def} and \eqref{eq:varphi_i-def}.  For $b \in \cB$, we define
\begin{gather*}
  \Delta_i\varphi_j(b) = \varphi_j(\te_i (b)) - \varphi_j(b), \qquad
  \Delta_i\varepsilon_j(b) = \varepsilon_j(b) - \varepsilon_j(\te_i(b)), \\
  \nabla_i\varphi_j(b) = \varphi_j(b) - \varphi_j(\tf_i (b)), \qquad
  \nabla_i\varepsilon_j(b) = \varepsilon_j(\tf_i(b)) - \varepsilon_j(b).
\end{gather*}

\begin{mydef}[{\cite[Def.~1.1]{Stembridge}}] \label{def:regular}
  We say $\cB$ is \emph{regular} if it satisfies the following properties.
  \begin{description}[style=multiline, labelwidth=1cm]
  	\item[\namedlabel{R1}{R1}] All monochromatic directed paths in $\cB$ have finite length.

    \item[\namedlabel{R2}{R2}] For all $b\in\cB$ and $i \in I$, there exists at most one $i$-coloured edge $b\to b'$ and $b \leftarrow b''$.

    \item[\namedlabel{R3}{R3}] If $\te_i(b)\neq 0$ and $i\neq j$, then $\Delta_i\varphi_j(b) + \Delta_i\varepsilon_j(b) = - \delta_{i+1,j} - \delta_{i,j+1}$.

    \item[\namedlabel{R4}{R4}] If $\te_i(b)\neq 0$ and $i\neq j$, then $\Delta_i\varphi_j(b),\Delta_i\varepsilon_j(b)\leq 0$.

    \item[\namedlabel{R5}{R5}] When $\te_i (b),\te_j (b) \neq 0$, then $\Delta_i\varepsilon_j(b) = 0$ implies $\te_i\te_j b=\te_j\te_ib$ and $\nabla_j\varphi_i(y) = 0$, where $y = \te_i\te_jb = \te_j\te_ib$.

    \item[\namedlabel{R6}{R6}] When $\te_i (b) ,\te_j (b) \neq 0$, then $\Delta_i\varepsilon_j(b) = \Delta_j\varepsilon_i(b) = -1$ implies $\te_i\te_j^2\te_ib = \te_j\te_i^2\te_jb$ and $\nabla_i\varphi_j(y)=\nabla_j\varphi_i(y) = -1$, where $y = \te_i\te_j^2\te_ib = \te_j\te_i^2\te_jb$.

    \item[\namedlabel{R5'}{R5$'$}] When $\tf_i (b) , \tf_j (b) \neq 0$, then $\Delta_i\varepsilon_j(b) = 0$ implies $\tf_i\tf_jb=\tf_j\tf_ib$ and $\nabla_j\varphi_i(y) = 0$, where $y = \tf_i\tf_jb = \tf_j\tf_ib$.

    \item[\namedlabel{R6'}{R6$'$}] When $\tf_i (b) ,\tf_j (b) \neq 0$, then $\Delta_i\varepsilon_j(b) = \Delta_j\varepsilon_i(b) = -1$ implies $\tf_i\tf_j^2\tf_ib = \tf_j\tf_i^2\tf_jb$ and $\nabla_i\varphi_j(y)=\nabla_j\varphi_i(y) = -1$, where $y = \tf_i\tf_j^2\tf_ib = \tf_j\tf_i^2\tf_jb$.
  \end{description}
\end{mydef}

Recall that $\cM(\lambda)$ is a crystal by Proposition~\ref{prop_crystal}.  As described above, we have the associated edge-coloured directed graph, with edges coloured by elements of $I$.

\begin{myprop} \label{prop_regular}
  The crystal $\cM(\lambda)$ is regular.
\end{myprop}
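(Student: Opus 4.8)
\emph{Proof proposal.}

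The plan is to verify conditions \ref{R1}--\ref{R6}, \ref{R5'}, and \ref{R6'} one at a time, exploiting the explicit description of the crystal operators as box moves (Lemma~\ref{lem_boxmoved}) together with the computation of $\varepsilon_i$ and $\varphi_i$ from the reduced signature (Corollary~\ref{cor_defepsilonphi}). Condition \ref{R1} is immediate from Remark~\ref{rem:finite-crystal-operator}, and \ref{R2} holds because $\te_i$ and $\tf_i$ are single-valued maps into $\cM(\lambda)\cup\{0\}$, so each vertex of the graph has at most one incoming and one outgoing $i$-coloured edge. The remaining conditions separate naturally according to the relative position of $i$ and $j$.

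For \ref{R3} and \ref{R4}, recall from Lemma~\ref{lem_boxmoved} that $\te_i$ deletes a box from row $i+1$ and creates one in row $i$, leaving all other rows untouched. Since the $j$-signature only sees boxes in rows $j$ and $j+1$, I would split into three cases. When $|i-j|\ge 2$, the operator $\te_i$ does not alter the $j$-signature at all, so $\Delta_i\varphi_j(b)=\Delta_i\varepsilon_j(b)=0$, matching $-\delta_{i+1,j}-\delta_{i,j+1}=0$. When $j=i+1$, applying $\te_i$ removes a single $\leftmoon$ from $\sigma_j(M)$; depending on whether that $\leftmoon$ survives in the reduced word, Corollary~\ref{cor_defepsilonphi} gives either $(\Delta_i\varphi_j,\Delta_i\varepsilon_j)=(-1,0)$ or $(0,-1)$. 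Symmetrically, when $j=i-1$, applying $\te_i$ inserts a single $\rightmoon$ into $\sigma_j(M)$, again yielding $(-1,0)$ or $(0,-1)$. In all cases the two quantities are $\le 0$, giving \ref{R4}, and their sum is exactly $-\delta_{i+1,j}-\delta_{i,j+1}$, giving \ref{R3}.

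The heart of the proof is \ref{R5}, \ref{R6}, and their duals. When $|i-j|\ge 2$, the supports of $\te_i$ and $\te_j$ (rows $i,i+1$ and rows $j,j+1$) are disjoint, so the box moves commute and $\te_i\te_j b=\te_j\te_i b$; moreover $\Delta_i\varepsilon_j(b)=0$ by the analysis above, and since $\tf_j$ leaves rows $i,i+1$ untouched we obtain $\nabla_j\varphi_i(y)=0$, verifying \ref{R5} (and \ref{R5'} dually). The genuinely interactive case is $|i-j|=1$, say $j=i+1$, where rows $i,i+1,i+2$ are all involved and the shared row $i+1$ couples the two signatures. Here I would record the reduced $i$- and $(i+1)$-signatures of $b$, locate the boxes moved by $\te_i$ (the last $\rightmoon$ of $\sigma^\red_i$, lying in row $i+1$) and by $\te_{i+1}$ (the last $\rightmoon$ of $\sigma^\red_{i+1}$, lying in row $i+2$), and track how removing and inserting these boxes rearrange the cancellations in the other signature. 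The hypothesis $\Delta_i\varepsilon_j(b)=0$ forces the two moved boxes to be non-interfering and yields \ref{R5} as in the disjoint case, whereas $\Delta_i\varepsilon_j(b)=\Delta_j\varepsilon_i(b)=-1$ is precisely the configuration in which the braid relation $\te_i\te_j^2\te_i b=\te_j\te_i^2\te_j b$ must be checked by following the boxes through all four applications.

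I expect this adjacent case of \ref{R6} to be the main obstacle: it requires a careful, but essentially finite, analysis of the relative column positions of the relevant boxes in rows $i$, $i+1$, and $i+2$, verifying that both sides of the braid relation move the same boxes to the same final positions and that the resulting values $\nabla_i\varphi_j(y)$ and $\nabla_j\varphi_i(y)$ both equal $-1$. The dual conditions \ref{R5'} and \ref{R6'} should then follow from the same bookkeeping applied to $\tf_i$, which by Lemma~\ref{lem_eifiswitch} performs the box move inverse to $\te_i$, so that the $\tf$-statements are obtained from the $\te$-statements by reversing all arrows of the graph.
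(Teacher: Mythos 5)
Your overall strategy is the same as the paper's: verify Stembridge's axioms one by one, with the same three-way case split on $|i-j|$. Your treatments of \ref{R1} and \ref{R2} coincide with the paper's, and your handling of \ref{R3}/\ref{R4} is sound, though it differs slightly in route: the paper proves \ref{R3} in one line from the already-established crystal axiom \pref{C1} via $\Delta_i\varphi_j(M)+\Delta_i\varepsilon_j(M)=\langle h_j,\wt(\te_iM)-\wt(M)\rangle=\langle h_j,\alpha_i\rangle$, whereas you prove it combinatorially (removing a $\leftmoon$ or inserting a $\rightmoon$ in $\sigma_j$ changes the reduced word by exactly one symbol, giving $(-1,0)$ or $(0,-1)$); both are correct, and your version has the small advantage of establishing \ref{R3} and \ref{R4} simultaneously.

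The genuine gap is that for \ref{R5} with $|i-j|=1$ and for all of \ref{R6} you have only described what needs to be checked, not checked it; these cases are the entire mathematical content of the proposition, and they occupy the bulk of the paper's proof. Moreover, the one substantive claim you do make there is wrong as stated: the hypothesis $\Delta_i\varepsilon_j(b)=0$ does \emph{not} force the two moved boxes to be ``non-interfering.'' The paper's proof of \ref{R5} in the adjacent case splits into two subcases: either the box $u$ moved by $\te_i$ is itself $j$-movable, or its $\leftmoon$ is cancelled in $\sigma^\red_j(M)$ by the $\rightmoon$ of another box $w$, which after the move re-cancels against the $\leftmoon$ of a third box $x\in\beta(\te_iM)$. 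In the second subcase there is real interference in the signature bookkeeping, and commutation $\te_i\te_jM=\te_j\te_iM$ together with $\nabla_j\varphi_i(y)=0$ holds only after analyzing this re-cancellation chain and showing (by a separate contradiction argument) that no $\leftmoon$ in row $i$ is newly cancelled by the move in row $j+1$. Likewise, \ref{R6} requires exhibiting the forced local configuration of the boxes $u$, $v$, $w$ and tracking them through all four operator applications on each side of the braid relation, then a further signature comparison to get $\nabla_i\varphi_j(y)=\nabla_j\varphi_i(y)=-1$; saying this is ``essentially finite'' does not discharge it. A smaller point: \ref{R5'} and \ref{R6'} are not obtained from \ref{R5} and \ref{R6} by ``reversing all arrows,'' since their hypotheses are still phrased in terms of $\Delta_i\varepsilon_j$ (defined via $\te_i$) while their conclusions concern the $\tf$'s; they need their own (admittedly parallel) verification, which the paper also omits as similar. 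As it stands, your proposal is a correct plan whose hard steps remain unexecuted.
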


\begin{proof}
  We verify the conditions of Definition~\ref{def:regular}.

  \medskip

  \noindent \emph{Property} \pref{R1}:  This follows immediately from Remark~\ref{rem:finite-crystal-operator}.

  \medskip

  \noindent\emph{Property} \pref{R2}:  This follows directly from functionality of $\te_i$ and $\tf_i$.

  \medskip

  \noindent\emph{Property} \pref{R3}:  Suppose $\te_i(M) \ne 0$ and $i \ne j$.  Then
  \begin{multline*}
    \Delta_i \varphi_j(M) + \Delta_i \varepsilon_j(M)
    = \big( \varphi_j (\te_i M) - \varepsilon (\te_i M) \big) - \big( \varphi_j(M) - \varepsilon_j(M) \big) \\
    = \langle h_j, \wt (\te_i M) - \wt(M) \rangle
    = \langle h_j, \alpha_i \rangle
    = -\delta_{i+1,j} - \delta_{i,j+1},
  \end{multline*}
  where the last equality follows from \eqref{eq:Cartan-matrix-entries}.

  \medskip

  \noindent\emph{Property} \pref{R4}:  Suppose $\te_i(M) \neq 0$ and $i \ne j$.  Consider the following three cases:
  \begin{itemize}
    \item If $|i-j| > 1$, then $\sigma_j(M) = \sigma_j(\te_i M)$, and so $\Delta_i \varphi_j(M) = \Delta_i \varepsilon_j(M) = 0$.  Hence \pref{R4} is satisfied.

    \item If $j=i+1$, then the application of $\te_i$ to $M$ moves a box in row $i+1=j$ to a box in row $i=j-1$.  Thus, $\sigma_j(\te_i M)$ is obtained from $\sigma_j(M)$ by removing a $\leftmoon$.  Then \pref{R4} follows from Corollary~\ref{cor_defepsilonphi}.

    \item If $j=i-1$, then the application of $\te_i$ to $M$ moves a box in row $i+1=j+2$ to a box in row $i=j+1$.  Thus, $\sigma_j(\te_i M)$ is obtained from $\sigma_j(M)$ by adding a $\rightmoon$.  Again, \pref{R4} follows from Corollary~\ref{cor_defepsilonphi}.
  \end{itemize}

  \medskip

  \noindent\emph{Property} \pref{R5}:  Suppose $\te_i(M), \te_j(M)\neq 0$ and that $\Delta_i\varepsilon_j(M)=0$. If $i=j$, then \pref{R5} holds, being vacuously true. Similarly to the proof of \pref{R3} and \pref{R4}, we consider three cases for $i\neq j$:
  \begin{enumerate}[(i)]
    \item	$|i-j|>1$,	\label{r5i}
    \item	$j=i+1$,	\label{r5ii}
    \item	$j=i-1$.	\label{r5iii}
  \end{enumerate}

  \smallskip

  \noindent \emph{Case} \ref{r5i}:  We have already seen that $\Delta_i\varepsilon_j(M) = 0$. Because the rows of $M$ involved in the computation of the $i$ and $j$-signatures are disjoint here, we have that $\te_i\te_jM = \te_j\te_iM$. We then see that:
  \begin{multline*}
    \nabla_j\varphi_i(y) = \varphi_i(y) - \varphi_i(\tf_jy) = \varphi_i(\te_i\te_jM) - \varphi_i(\tf_j\te_j\te_iM) = \\
    = \varphi_i(\te_jM) + 1 - \varphi_i(\te_iM) = \varphi_i(\te_jM) - \varphi_i(M) = \Delta_j\varphi_i(M)= 0,
  \end{multline*}
  where the last equality follows from the fact that the rows involved in the computation of the $i$- and $j$-signatures are disjoint. Hence \pref{R5} holds for \ref{r5i}.

  \smallskip

  \noindent \emph{Case} \ref{r5ii}:  Assume $j=i+1$.  Suppose $u\in\beta(M)$ in row $j=i+1$ is moved by $\te_i$, and $v\in\beta(M)$ in row $j+1$ is moved by $\te_j$. Because $\Delta_i\varepsilon_j(M) = 0$, we know the number of $j$-movable boxes in row $j+1$ of $M$ is the same as that of $\te_i(M)$, after $u$ is moved. Considering the possibilities for $u$ which may or may not cancel in $\sigma^\red_j(M)$, we see $\Delta_i\varepsilon_j(M)=0$ if and only if:
  \begin{enumerate}[(a)]
    \item	the box $u$ is movable with respect to $\sigma_j(M)$; or \label{r5a}
    \item	the box $u$ is not movable with respect to $\sigma_j(M)$, its corresponding $\leftmoon$ in $\sigma^\red_j(M)$ cancelling with a $\rightmoon$ corresponding to a $w\in\beta(M)$ which further cancels with a $\leftmoon$ corresponding to an $x\in\beta(\te_iM)$ after moving of $u$. \label{r5b}
  \end{enumerate}
  For \ref{r5a}, we consider the relative position of $u$ and $v$. By assumption, both $u$ and $v$ are $j$-movable and hence the $\leftmoon$ of $u$ cannot appear before the $\rightmoon$ of $v$ in $\sigma^\red_j(M)$. So $u$ must be to the right of $v$. This scenario is depicted in Figure \ref{fig_r5_1}.

  \begin{figure}[!h]
    \begin{center}
      \begin{tikzpicture}
        \matrix {
        \node{$j+1$};	& \node{$\boxed{v}$}; \\
        \node{$j=i+1$};	& & \node{$\cdots$}; & \node(U){$\boxed{u}$};\\
        \node{$i$};\\
        };
      \end{tikzpicture}
      \caption{\label{fig_r5_1} The movement of $v$ will not impede the movement of $u$ and vice versa.}
    \end{center}
  \end{figure}
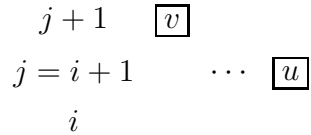

  Because $u$ is to the right of $v$, the movement of $v$ does not impede the movement of $u$ and vice versa, and thus $\te_i$ and $\te_j$ commute.

  For \ref{r5b}, the $\leftmoon$ corresponding to $u$ cancels with the $\rightmoon$ of $w$ which, subsequent to moving $u$, cancels with the $\leftmoon$ of some $x \in \beta(\te_iM)$. Note that $x$ necessarily is also a $j$-movable box.  Again, we consider the relative positions of $u$ and $v$. Note that $x$ must be to the right of $v$ since they are both movable, and hence since $x$ comes before $u$.  In addition, $u$ and $w$ are to the left of $v$.  This scenario is depicted in Figure \ref{fig_r5_2}.

  \begin{figure}[!h]
    \begin{center}
      \begin{tikzpicture}
        \matrix {
        \node{$j+1$};	& \node{$\boxed{v}$}; &&&& \coordinate(B); & \node{$\phantom{\boxed{v}}$}; & \node(W){$\boxed{w}$}; \\
        \node{$j=i+1$};	& & \node{$\cdots$}; & \node(X){$\boxed{x}$}; & \node{$\cdots$}; & \node(U){$\boxed{u}$}; & & \coordinate(A); \\
        \node{$i$};\\
        };
        \draw (U) -- (A) -- (W);
        \draw (U) -- (B) -- (W);
      \end{tikzpicture}
      \caption{\label{fig_r5_2} When $u$ moves, the $\rightmoon$ of $w$ cancels with the $\leftmoon$ of $x$.}
    \end{center}
  \end{figure}

  With this local configuration it is easy to see that $\te_i\te_j(M) = \te_j\te_i(M)$.  In both \ref{r5a} and \ref{r5b}, since $\te_i\te_j(M) = \te_j\te_i(M) = y$, we get the same result from \ref{r5i} that $\nabla_j\varphi_i(y) = \Delta_j\varphi_i(M)$, and so we now seek to show that the number of $\leftmoon$ in $\sigma^\red_i$ is unchanged in going from $M$ to $\te_j(M)$.

  In either \ref{r5a} or \ref{r5b}, suppose for a contradiction that the $\leftmoon$ corresponding to a box $z$ in the $i^\text{th}$ row of $M$ is canceled by the movement of $v$ to, say, $v'$ in the $(i+1)^\text{th}$ row of $\te_j(M)$. Then by the rules for cancellation, it must be below or to the left of $v$. This places $z$ to the left of $u$ in the $(i+1)^\text{th}$ row which is already assumed to be movable, meaning that the $\leftmoon$ corresponding to $z$ would appear to the left of the $\rightmoon$ of $u$ in $\sigma^\red_j(M)$, and hence they would cancel, resulting in a contradiction. Since the movement of $v$ to $v'$ does not move a box into row $i$, it is clear that $\varphi_i$ is unchanged in going from $M$ to $\te_j(M)$, and thus $\nabla_j\varphi_i(y) = \Delta_j\varphi_i(M) = 0$, as desired.

  \smallskip

  \noindent \emph{Case } \ref{r5iii}:  This case is similar and its proof will be omitted.

  \medskip

  \noindent\emph{Proof of} \pref{R6}:
  Suppose that $\te_i(M),\te_j(M) \neq 0$. We note that, by the above arguments, $\Delta_i\varepsilon_i(M) = -1$ occurs only in two cases:
  \begin{enumerate}[(i)]
    \item	$j = i+1$,	\label{r6i}
    \item	$j = i-1$.	\label{r6ii}
  \end{enumerate}
  We consider two boxes: $u \in \beta(M)$ which moves after application of $\te_j$ to $u'\in\beta(\te_jM)$, and $v\in\beta(M)$ which moves after application of $\te_i$ to $v'\in\beta(\te_iM)$.

  First consider case \ref{r6i}.   We have $\Delta_i\varepsilon_j(M) = \Delta_j\varepsilon_i(M) = -1$ if and only if the $\rightmoon$ corresponding to $u'$ does not cancel in $\sigma^\red_i(\te_iM)$ and the $\leftmoon$ corresponding to $v$ cancels in $\sigma^\red_j(M)$. Locally, $M$ must appear as in Figure \ref{fig_property6a} where the $\rightmoon$ corresponding to the box $w$ cancels with the $\leftmoon$ of $v$ in $\sigma^\red_j(M)$. We now show that $\te_i\te_j^2\te_iM = \te_j\te_i^2\te_jM$.

  \begin{figure}[!h]
    \begin{center}
      \begin{tikzpicture}
        \draw [dashed, thin] (-1,-1) -- (0,-1);
        \draw [dashed, thin] (-3,0) -- (0,0);
        \draw [dashed, thin] (-3,1) -- (0,1);
        \draw [dashed, thin] (1,-1) -- (3,-1);
        \draw [dashed, thin] (1,0) -- (3,0);
        \draw [dashed, thin] (4,0) -- (6,0);
        \draw [dashed, thin] (4,1) -- (5,1);
        \draw [dashed, thin] (-2.5,-0.5) -- (-2.5,1.5);
        \draw [dashed, thin] (-1.5,-0.5) -- (-1.5,1.5);
        \draw [dashed, thin] (-0.5,-1.5) -- (-0.5,1.5);
        \draw [dashed, thin] (1.5,-1.5) -- (1.5,0.5);
        \draw [dashed, thin] (2.5,-1.5) -- (2.5,0.5);
        \draw [dashed, thin] (4.5,-0.5) -- (4.5,1.5);
        \draw [dashed, thin] (5.5,-0.5) -- (5.5,0.5);
        \node at (-4.5,-1) {$i$};
        \node at (-4.5,0) {$i+1$};
        \node at (-4.5,1) {$i+2$};
        \draw [fill=white,ultra thin] (-3,0) circle [radius=0.2];
        \node at (-3,0) {$ \pm $};
        \node[left] at (-3.2,0) {$\cdots$};
        \draw [fill=white,ultra thin] (-3,1) circle [radius=0.2];
        \node at (-3,1) {$ - $};
        \draw [fill=white,ultra thin] (-2,0) circle [radius=0.2];
        \node at (-2,0) {$ + $};
        \draw [fill=white,ultra thin] (-2,1) circle [radius=0.2];
        \node at (-2,1) {$ - $};
        \draw [fill=white,ultra thin] (-1,-1) circle [radius=0.2];
        \node at (-1,-1) {$ \pm $};
        \draw [fill=white,ultra thin] (-1,0) circle [radius=0.2];
        \node at (-1,0) {$ - $};
        \draw [fill=white,ultra thin] (-1,1) circle [radius=0.2];
        \node at (-1,1) {$ \pm $};
        \draw [fill=white,ultra thin] (0,-1) circle [radius=0.2];
        \node at (0,-1) {$ \pm $};
        \draw [fill=white,ultra thin] (0,0) circle [radius=0.2];
        \node at (0,0) {$ \pm $};
        \draw [fill=white,ultra thin] (0,1) circle [radius=0.2];
        \node at (0,1) {$ \pm $};
        \draw [fill=white,ultra thin] (1,-1) circle [radius=0.2];
        \node at (1,-1) {$ \pm $};
        \node[left] at (0.9,-1) {$\cdots$};
        \draw [fill=white,ultra thin] (1,0) circle [radius=0.2];
        \node at (1,0) {$ - $};
        \node[left] at (0.9,0) {$\cdots$};
        \draw [fill=white,ultra thin] (2,-1) circle [radius=0.2];
        \node at (2,-1) {$ + $};
        \draw [fill=white,ultra thin] (2,0) circle [radius=0.2];
        \node at (2,0) {$ - $};
        \draw [fill=white,ultra thin] (3,-1) circle [radius=0.2];
        \node at (3,-1) {$ - $};
        \draw [fill=white,ultra thin] (3,0) circle [radius=0.2];
        \node at (3,0) {$ \pm $};
        \draw [fill=white,ultra thin] (4,0) circle [radius=0.2];
        \node at (4,0) {$ \pm $};
        \node[left] at (3.9,0) {$\cdots$};
        \draw [fill=white,ultra thin] (4,1) circle [radius=0.2];
        \node at (4,1) {$ - $};
        \draw [fill=white,ultra thin] (5,0) circle [radius=0.2];
        \node at (5,0) {$ \pm $};
        \draw [fill=white,ultra thin] (5,1) circle [radius=0.2];
        \node at (5,1) {$ - $};
        \draw [fill=white,ultra thin] (6,0) circle [radius=0.2];
        \node at (6,0) {$ \pm $};
        \node[right] at (6.1,0) {$\cdots$};
        \draw [fill=white,ultra thin] (-2.5,-0.5) circle [radius=0.2];
        \node at (-2.5,-0.5) {$ \pm $};
        \draw [fill=white,ultra thin] (-2.5,0.5) circle [radius=0.2];
        \node at (-2.5,0.5) {$ + $};
        \draw [fill=white,ultra thin] (-2.5,1.5) circle [radius=0.2];
        \node at (-2.5,1.5) {$ + $};
        \draw [fill=white,ultra thin] (-1.5,-0.5) circle [radius=0.2];
        \node at (-1.5,-0.5) {$ + $};
        \draw [fill=white,ultra thin] (-1.5,0.5) circle [radius=0.2];
        \node at (-1.5,0.5) {$ - $};
        \draw [fill=white,ultra thin] (-1.5,1.5) circle [radius=0.2];
        \node at (-1.5,1.5) {$ \pm $};
        \draw [fill=white,ultra thin] (-0.5,-1.5) circle [radius=0.2];
        \node at (-0.5,-1.5) {$ \pm $};
        \draw [fill=white,ultra thin] (-0.5,-0.5) circle [radius=0.2];
        \node at (-0.5,-0.5) {$ \pm $};
        \draw [fill=white,ultra thin] (-0.5,0.5) circle [radius=0.2];
        \node at (-0.5,0.5) {$ \pm $};
        \draw [fill=white,ultra thin] (-0.5,1.5) circle [radius=0.2];
        \node at (-0.5,1.5) {$ \pm $};
        \draw [fill=white,ultra thin] (1.5,-1.5) circle [radius=0.2];
        \node at (1.5,-1.5) {$ \pm $};
        \draw [fill=white,ultra thin] (1.5,-0.5) circle [radius=0.2];
        \node at (1.5,-0.5) {$ + $};
        \draw [fill=white,ultra thin] (1.5,0.5) circle [radius=0.2];
        \node at (1.5,0.5) {$ + $};
        \draw [fill=white,ultra thin] (2.5,-1.5) circle [radius=0.2];
        \node at (2.5,-1.5) {$ \pm $};
        \draw [fill=white,ultra thin] (2.5,-0.5) circle [radius=0.2];
        \node at (2.5,-0.5) {$ - $};
        \draw [fill=white,ultra thin] (2.5,0.5) circle [radius=0.2];
        \node at (2.5,0.5) {$ \pm $};
        \draw [fill=white,ultra thin] (4.5,-0.5) circle [radius=0.2];
        \node at (4.5,-0.5) {$ \pm $};
        \draw [fill=white,ultra thin] (4.5,0.5) circle [radius=0.2];
        \node at (4.5,0.5) {$ + $};
        \draw [fill=white,ultra thin] (4.5,1.5) circle [radius=0.2];
        \node at (4.5,1.5) {$ + $};
        \draw [fill=white,ultra thin] (5.5,-0.5) circle [radius=0.2];
        \node at (5.5,-0.5) {$ + $};
        \draw [fill=white,ultra thin] (5.5,0.5) circle [radius=0.2];
        \node at (5.5,0.5) {$ - $};
        \node at (-2.5,1) [rectangle,draw,fill=white] {$ u $};
        \node at (1.5,0) [rectangle,draw,fill=white] {$ v $};
        \node at (4.5,1) [rectangle,draw,fill=white] {$ w $};
        \draw [fill=gray] (-2.5,0) circle[radius=0.1];
        \draw [fill=gray] (-1.5,0) circle[radius=0.1];
        \draw [fill=gray] (-1.5,1) circle[radius=0.1];
        \draw [fill=gray] (-0.5,-1) circle[radius=0.1];
        \draw [fill=gray] (-0.5,0) circle[radius=0.1];
        \draw [fill=gray] (-0.5,1) circle[radius=0.1];
        \draw [fill=gray] (1.5,-1) circle[radius=0.1];
        \draw [fill=gray] (2.5,-1) circle[radius=0.1];
        \draw [fill=gray] (2.5,0) circle[radius=0.1];
        \draw [fill=gray] (4.5,0) circle[radius=0.1];
        \draw [fill=gray] (5.5,0) circle[radius=0.1];
  		\end{tikzpicture}
      \caption{Local configuration of $M$.\label{fig_property6a}}
    \end{center}
  \end{figure}

  Application of $\te_i$ to $M$ moves $v$ diagonally to $v'$ in row $i$. Application of $\te_j$ to $\te_i(M)$ then moves $w$ to $w'$ in row $j$. We are able to do this since its corresponding $\rightmoon$ appears in $\sigma^\red_j(\te_iM)$ now that $v$ is no longer present. No movable boxes exist between $u$ and $w$ in $M$ because $u$ was the leftmost movable box, and thus application of $\te_j$ again yields that $u$ moves to $u'$ in row $j$.

  Since $v$ was the box to move after application of $\te_i$, we know that there are no movable boxes in row $j$ beyond $v'$, besides possibly $w'$. There is a possibility that there exists a movable box $x$ between $v'$ and $w'$, but the $\rightmoon$ of $w'$ is cancelled by at least the $\leftmoon$ of $v'$ in $\sigma^\red_i(\te_j^2e_iM)$, if not by the $\leftmoon$ of the possible $x$.

  Because the $\rightmoon$ of $v$ was not cancelled previously, we know there are no movable boxes in row $i$ between $u'$ and $v'$, as well as by assumption that $u'$ does not cancel in $\sigma^\red_i(\te_jM)$, still holding for $\sigma^\red_i(\te_j^2e_iM)$ since no boxes have newly moved to a position to the left of $u'$ in row $i$.

  Thus $\te_i\te_j^2\te_iM\neq 0$ since at least $u'$ is movable. Application of $\te_i$ to $\te_j^2\te_iM$ will move some movable box in row $j$ between $u'$ and where $v$ used to be, including $u'$ as a possibility. Without loss of generality, we suppose $u'$ moves again---supposing there are no other boxes in this given area---to a box which we will call $u''$. We call $y = \te_i\te_j^2\te_iM$ which locally appears as in Figure \ref{fig_property6aa}.

  \begin{figure}[!h]
    \begin{center}
      \begin{tikzpicture}
        \draw [dashed, thin] (-1,-1) -- (0,-1);
        \draw [dashed, thin] (-3,0) -- (0,0);
        \draw [dashed, thin] (-3,1) -- (0,1);
        \draw [dashed, thin] (1,-1) -- (3,-1);
        \draw [dashed, thin] (1,0) -- (3,0);
        \draw [dashed, thin] (4,0) -- (6,0);
        \draw [dashed, thin] (4,1) -- (5,1);
        \draw [dashed, thin] (-2.5,-0.5) -- (-2.5,1.5);
        \draw [dashed, thin] (-1.5,-0.5) -- (-1.5,1.5);
        \draw [dashed, thin] (-0.5,-1.5) -- (-0.5,1.5);
        \draw [dashed, thin] (1.5,-1.5) -- (1.5,0.5);
        \draw [dashed, thin] (2.5,-1.5) -- (2.5,0.5);
        \draw [dashed, thin] (4.5,-0.5) -- (4.5,1.5);
        \draw [dashed, thin] (5.5,-0.5) -- (5.5,0.5);
        \node at (-4.5,-1) {$i$};
        \node at (-4.5,0) {$j$};
        \node at (-4.5,1) {$j+1$};
        \draw [fill=white,ultra thin] (-3,0) circle [radius=0.2];
        \node at (-3,0) {$ \pm $};
        \draw [fill=white,ultra thin] (-3,1) circle [radius=0.2];
        \node at (-3,1) {$ - $};
        \draw [fill=white,ultra thin] (-2,0) circle [radius=0.2];
        \node at (-2,0) {$ - $};
        \draw [fill=white,ultra thin] (-2,1) circle [radius=0.2];
        \node at (-2,1) {$ + $};
        \draw [fill=white,ultra thin] (-1,-1) circle [radius=0.2];
        \node at (-1,-1) {$ - $};
        \draw [fill=white,ultra thin] (-1,0) circle [radius=0.2];
        \node at (-1,0) {$ + $};
        \draw [fill=white,ultra thin] (-1,1) circle [radius=0.2];
        \node at (-1,1) {$ \pm $};
        \draw [fill=white,ultra thin] (0,-1) circle [radius=0.2];
        \node at (0,-1) {$ - $};
        \draw [fill=white,ultra thin] (0,0) circle [radius=0.2];
        \node at (0,0) {$ \pm $};
        \draw [fill=white,ultra thin] (0,1) circle [radius=0.2];
        \node at (0,1) {$ \pm $};
        \draw [fill=white,ultra thin] (1,-1) circle [radius=0.2];
        \node at (1,-1) {$ \pm $};
        \node[left] at (0.9,-1) {$\cdots$};
        \draw [fill=white,ultra thin] (1,0) circle [radius=0.2];
        \node at (1,0) {$ - $};
        \node[left] at (0.9,0) {$\cdots$};
        \draw [fill=white,ultra thin] (2,-1) circle [radius=0.2];
        \node at (2,-1) {$ - $};
        \draw [fill=white,ultra thin] (2,0) circle [radius=0.2];
        \node at (2,0) {$ + $};
        \draw [fill=white,ultra thin] (3,-1) circle [radius=0.2];
        \node at (3,-1) {$ - $};
        \draw [fill=white,ultra thin] (3,0) circle [radius=0.2];
        \node at (3,0) {$ \pm $};
        \draw [fill=white,ultra thin] (4,0) circle [radius=0.2];
        \node at (4,0) {$ \pm $};
        \node[left] at (3.9,0) {$\cdots$};
        \draw [fill=white,ultra thin] (4,1) circle [radius=0.2];
        \node at (4,1) {$ - $};
        \draw [fill=white,ultra thin] (5,0) circle [radius=0.2];
        \node at (5,0) {$ - $};
        \draw [fill=white,ultra thin] (5,1) circle [radius=0.2];
        \node at (5,1) {$ + $};
        \draw [fill=white,ultra thin] (6,0) circle [radius=0.2];
        \node at (6,0) {$ - $};
        \node[right] at (6.1,0) {$\cdots$};
        \draw [fill=white,ultra thin] (-2.5,-0.5) circle [radius=0.2];
        \node at (-2.5,-0.5) {$ \pm $};
        \draw [fill=white,ultra thin] (-2.5,0.5) circle [radius=0.2];
        \node at (-2.5,0.5) {$ - $};
        \draw [fill=white,ultra thin] (-2.5,1.5) circle [radius=0.2];
        \node at (-2.5,1.5) {$ + $};
        \draw [fill=white,ultra thin] (-1.5,-0.5) circle [radius=0.2];
        \node at (-1.5,-0.5) {$ - $};
        \draw [fill=white,ultra thin] (-1.5,0.5) circle [radius=0.2];
        \node at (-1.5,0.5) {$ + $};
        \draw [fill=white,ultra thin] (-1.5,1.5) circle [radius=0.2];
        \node at (-1.5,1.5) {$ \pm $};
        \draw [fill=white,ultra thin] (-0.5,-1.5) circle [radius=0.2];
        \node at (-0.5,-1.5) {$ + $};
        \draw [fill=white,ultra thin] (-0.5,-0.5) circle [radius=0.2];
        \node at (-0.5,-0.5) {$ + $};
        \draw [fill=white,ultra thin] (-0.5,0.5) circle [radius=0.2];
        \node at (-0.5,0.5) {$ \pm $};
        \draw [fill=white,ultra thin] (-0.5,1.5) circle [radius=0.2];
        \node at (-0.5,1.5) {$ \pm $};
        \draw [fill=white,ultra thin] (1.5,-1.5) circle [radius=0.2];
        \node at (1.5,-1.5) {$ \pm $};
        \draw [fill=white,ultra thin] (1.5,-0.5) circle [radius=0.2];
        \node at (1.5,-0.5) {$ - $};
        \draw [fill=white,ultra thin] (1.5,0.5) circle [radius=0.2];
        \node at (1.5,0.5) {$ + $};
        \draw [fill=white,ultra thin] (2.5,-1.5) circle [radius=0.2];
        \node at (2.5,-1.5) {$ + $};
        \draw [fill=white,ultra thin] (2.5,-0.5) circle [radius=0.2];
        \node at (2.5,-0.5) {$ + $};
        \draw [fill=white,ultra thin] (2.5,0.5) circle [radius=0.2];
        \node at (2.5,0.5) {$ \pm $};
        \draw [fill=white,ultra thin] (4.5,-0.5) circle [radius=0.2];
        \node at (4.5,-0.5) {$ \pm $};
        \draw [fill=white,ultra thin] (4.5,0.5) circle [radius=0.2];
        \node at (4.5,0.5) {$ - $};
        \draw [fill=white,ultra thin] (4.5,1.5) circle [radius=0.2];
        \node at (4.5,1.5) {$ + $};
        \draw [fill=white,ultra thin] (5.5,-0.5) circle [radius=0.2];
        \node at (5.5,-0.5) {$ + $};
        \draw [fill=white,ultra thin] (5.5,0.5) circle [radius=0.2];
        \node at (5.5,0.5) {$ + $};
        \node at (-0.5,-1) [rectangle,draw,fill=white] {$ u'' $};
        \node at (2.5,-1) [rectangle,draw,fill=white] {$ v' $};
        \node at (5.5,0) [rectangle,draw,fill=white] {$ w' $};
        \draw [fill=gray] (-2.5,0) circle[radius=0.1];
        \draw [fill=gray] (-2.5,1) circle[radius=0.1];
        \draw [fill=gray] (-1.5,0) circle[radius=0.1];
        \draw [fill=gray] (-1.5,1) circle[radius=0.1];
        \draw [fill=gray] (-0.5,0) circle[radius=0.1];
        \draw [fill=gray] (-0.5,1) circle[radius=0.1];
        \draw [fill=gray] (1.5,-1) circle[radius=0.1];
        \draw [fill=gray] (1.5,0) circle[radius=0.1];
        \draw [fill=gray] (2.5,0) circle[radius=0.1];
        \draw [fill=gray] (4.5,0) circle[radius=0.1];
        \draw [fill=gray] (4.5,1) circle[radius=0.1];
  		\end{tikzpicture}
      \caption{Local configuration of $y = \te_i\te_j^2\te_iM$.\label{fig_property6aa}}
    \end{center}
  \end{figure}

  We now consider the local configuration of $\te_j\te_i^2\te_jM$. Application of $\te_j$ to $M$ results in the movement of $u$ to $u'$. Subsequent application of $\te_i$ results in the movement of $u'$ to $u''$, still supposing (without loss of generality) that there are no boxes between $u'$ and $v$ in row $j$. Application of $\te_i$ again results in the movement of $v$ to $v'$, and now since the $\rightmoon$ of $w$ is not cancelled by the $\leftmoon$ of $v$ any longer, a subsequent application of $\te_j$ moves $w$ to $w'$. The result is precisely $y$ since no other boxes were moved, the changes only being local changes in the ice model. Hence we have $\te_i\te_j^2\te_iM = \te_j\te_i^2\te_jM = y$.

  We have:
  \begin{multline*}
    \nabla_i\varphi_j(y) = \varphi_j(y) - \varphi_j(\tf_iy) = \varphi_j(\te_j\te_i^2\te_jM) - \varphi_j(\tf_i\te_i\te_j^2\te_iM) =\\
    = \varphi_j(\te_i^2\te_jM) + 1 - \varphi_j(\te_j^2\te_iM) = \varphi_j(\te_i^2\te_jM) - \varphi_j(\te_iM) - 1,
  \end{multline*}
  and similarly,
  \[
    \nabla_j\varphi_i(y) = \varphi_i(\te_j^2\te_iM) - \varphi_i(\te_jM) - 1.
  \]
  Hence we wish to show that $\varphi_j(\te_i^2\te_jM) = \varphi_j(\te_iM)$ and $\varphi_i(\te_j^2\te_iM) = \varphi_i(\te_jM)$.

  Considering the first equality, we consider only rows $j$ and $j+1$, where we check whether the number of $\leftmoon$'s remains unchanged in going from $\sigma^\red_j(\te_iM)$ to $\sigma^\red_j(\te_i^2\te_jM)$, the local configurations pictured as in Figures~\ref{eb} and \ref{eeb}.

  \begin{figure}[!h]
    \begin{center}
      \begin{tikzpicture}
        \draw [dashed, thin] (-3,0) -- (-1,0);
        \draw [dashed, thin] (-3,1) -- (-1,1);
        \draw [dashed, thin] (0,0) -- (1,0);
        \draw [dashed, thin] (2,0) -- (3,0);
        \draw [dashed, thin] (4,0) -- (6,0);
        \draw [dashed, thin] (4,1) -- (5,1);
        \draw [dashed, thin] (-2.5,-0.5) -- (-2.5,1.5);
        \draw [dashed, thin] (-1.5,-0.5) -- (-1.5,1.5);
        \draw [dashed, thin] (0.5,-0.5) -- (0.5,0.5);
        \draw [dashed, thin] (2.5,-0.5) -- (2.5,0.5);
        \draw [dashed, thin] (4.5,-0.5) -- (4.5,1.5);
        \draw [dashed, thin] (5.5,-0.5) -- (5.5,0.5);
        \node at (-4,0) {$j$};
        \node at (-4,1) {$j+1$};
        \draw [fill=white,ultra thin] (-3,0) circle [radius=0.2];
        \node at (-3,0) {$ \pm $};
        \draw [fill=white,ultra thin] (-3,1) circle [radius=0.2];
        \node at (-3,1) {$ - $};
        \draw [fill=white,ultra thin] (-2,0) circle [radius=0.2];
        \node at (-2,0) {$ + $};
        \draw [fill=white,ultra thin] (-2,1) circle [radius=0.2];
        \node at (-2,1) {$ - $};
        \draw [fill=white,ultra thin] (-1,0) circle [radius=0.2];
        \node at (-1,0) {$ - $};
        \draw [fill=white,ultra thin] (-1,1) circle [radius=0.2];
        \node at (-1,1) {$ \pm $};
        \node[right] at (-0.9,1) {$\cdots$};
        \draw [fill=white,ultra thin] (0,0) circle [radius=0.2];
        \node at (0,0) {$ - $};
        \node[left] at (-0.1,0) {$\cdots$};
        \draw [fill=white,ultra thin] (1,0) circle [radius=0.2];
        \node at (1,0) {$ - $};
        \draw [fill=white,ultra thin] (2,0) circle [radius=0.2];
        \node at (2,0) {$ - $};
        \node[left] at (1.9,0) {$\cdots$};
        \draw [fill=white,ultra thin] (3,0) circle [radius=0.2];
        \node at (3,0) {$ + $};
        \draw [fill=white,ultra thin] (4,0) circle [radius=0.2];
        \node at (4,0) {$ \pm $};
        \node[left] at (3.9,0) {$\cdots$};
        \draw [fill=white,ultra thin] (4,1) circle [radius=0.2];
        \node at (4,1) {$ - $};
        \node[left] at (3.9,1) {$\cdots$};
        \draw [fill=white,ultra thin] (5,0) circle [radius=0.2];
        \node at (5,0) {$ + $};
        \draw [fill=white,ultra thin] (5,1) circle [radius=0.2];
        \node at (5,1) {$ - $};
        \draw [fill=white,ultra thin] (6,0) circle [radius=0.2];
        \node at (6,0) {$ - $};
        \node[right] at (6.1,0) {$\cdots$};
        \draw [fill=white,ultra thin] (-2.5,-0.5) circle [radius=0.2];
        \node at (-2.5,-0.5) {$ \pm $};
        \draw [fill=white,ultra thin] (-2.5,0.5) circle [radius=0.2];
        \node at (-2.5,0.5) {$ + $};
        \draw [fill=white,ultra thin] (-2.5,1.5) circle [radius=0.2];
        \node at (-2.5,1.5) {$ + $};
        \draw [fill=white,ultra thin] (-1.5,-0.5) circle [radius=0.2];
        \node at (-1.5,-0.5) {$ + $};
        \draw [fill=white,ultra thin] (-1.5,0.5) circle [radius=0.2];
        \node at (-1.5,0.5) {$ - $};
        \draw [fill=white,ultra thin] (-1.5,1.5) circle [radius=0.2];
        \node at (-1.5,1.5) {$ \pm $};
        \draw [fill=white,ultra thin] (0.5,-0.5) circle [radius=0.2];
        \node at (0.5,-0.5) {$ + $};
        \draw [fill=white,ultra thin] (0.5,0.5) circle [radius=0.2];
        \node at (0.5,0.5) {$ + $};
        \draw [fill=white,ultra thin] (2.5,-0.5) circle [radius=0.2];
        \node at (2.5,-0.5) {$ - $};
        \draw [fill=white,ultra thin] (2.5,0.5) circle [radius=0.2];
        \node at (2.5,0.5) {$ + $};
        \draw [fill=white,ultra thin] (4.5,-0.5) circle [radius=0.2];
        \node at (4.5,-0.5) {$ \pm $};
        \draw [fill=white,ultra thin] (4.5,0.5) circle [radius=0.2];
        \node at (4.5,0.5) {$ + $};
        \draw [fill=white,ultra thin] (4.5,1.5) circle [radius=0.2];
        \node at (4.5,1.5) {$ + $};
        \draw [fill=white,ultra thin] (5.5,-0.5) circle [radius=0.2];
        \node at (5.5,-0.5) {$ + $};
        \draw [fill=white,ultra thin] (5.5,0.5) circle [radius=0.2];
        \node at (5.5,0.5) {$ - $};
        \node at (-2.5,1) [rectangle,draw,fill=white] {$ u $};
        \node at (0.5,0) [rectangle,draw,fill=white] {$ x $};
        \node at (4.5,1) [rectangle,draw,fill=white] {$ w $};
        \draw [fill=gray] (-2.5,0) circle[radius=0.1];
        \draw [fill=gray] (-1.5,0) circle[radius=0.1];
        \draw [fill=gray] (-1.5,1) circle[radius=0.1];
        \draw [fill=gray] (2.5,0) circle[radius=0.1];
        \draw [fill=gray] (4.5,0) circle[radius=0.1];
        \draw [fill=gray] (5.5,0) circle[radius=0.1];
      \end{tikzpicture}
      \caption{Local configuration of $\te_i\left( M \right)$.\label{eb}}
    \end{center}
  \end{figure}

  \begin{figure}[!h]
    \begin{center}
      \begin{tikzpicture}
        \draw [dashed, thin] (-3,0) -- (-1,0);
        \draw [dashed, thin] (-3,1) -- (-1,1);
        \draw [dashed, thin] (0,0) -- (1,0);
        \draw [dashed, thin] (2,0) -- (3,0);
        \draw [dashed, thin] (4,0) -- (6,0);
        \draw [dashed, thin] (4,1) -- (5,1);
        \draw [dashed, thin] (-2.5,-0.5) -- (-2.5,1.5);
        \draw [dashed, thin] (-1.5,-0.5) -- (-1.5,1.5);
        \draw [dashed, thin] (0.5,-0.5) -- (0.5,0.5);
        \draw [dashed, thin] (2.5,-0.5) -- (2.5,0.5);
        \draw [dashed, thin] (4.5,-0.5) -- (4.5,1.5);
        \draw [dashed, thin] (5.5,-0.5) -- (5.5,0.5);
        \node at (-4,0) {$j$};
        \node at (-4,1) {$j+1$};
        \draw [fill=white,ultra thin] (-3,0) circle [radius=0.2];
        \node at (-3,0) {$ \pm $};
        \draw [fill=white,ultra thin] (-3,1) circle [radius=0.2];
        \node at (-3,1) {$ - $};
        \draw [fill=white,ultra thin] (-2,0) circle [radius=0.2];
        \node at (-2,0) {$ - $};
        \draw [fill=white,ultra thin] (-2,1) circle [radius=0.2];
        \node at (-2,1) {$ + $};
        \draw [fill=white,ultra thin] (-1,0) circle [radius=0.2];
        \node at (-1,0) {$ - $};
        \draw [fill=white,ultra thin] (-1,1) circle [radius=0.2];
        \node at (-1,1) {$ \pm $};
        \node[right] at (-0.9,1) {$\cdots$};
        \draw [fill=white,ultra thin] (0,0) circle [radius=0.2];
        \node at (0,0) {$ - $};
        \node[left] at (-0.1,0) {$\cdots$};
        \draw [fill=white,ultra thin] (1,0) circle [radius=0.2];
        \node at (1,0) {$ + $};
        \draw [fill=white,ultra thin] (2,0) circle [radius=0.2];
        \node at (2,0) {$ - $};
        \node[left] at (1.9,0) {$\cdots$};
        \draw [fill=white,ultra thin] (3,0) circle [radius=0.2];
        \node at (3,0) {$ + $};
        \draw [fill=white,ultra thin] (4,0) circle [radius=0.2];
        \node at (4,0) {$ \pm $};
        \node[left] at (3.9,0) {$\cdots$};
        \draw [fill=white,ultra thin] (4,1) circle [radius=0.2];
        \node at (4,1) {$ - $};
        \node[left] at (3.9,1) {$\cdots$};
        \draw [fill=white,ultra thin] (5,0) circle [radius=0.2];
        \node at (5,0) {$ + $};
        \draw [fill=white,ultra thin] (5,1) circle [radius=0.2];
        \node at (5,1) {$ - $};
        \draw [fill=white,ultra thin] (6,0) circle [radius=0.2];
        \node at (6,0) {$ - $};
        \node[right] at (6.1,0) {$\cdots$};
        \draw [fill=white,ultra thin] (-2.5,-0.5) circle [radius=0.2];
        \node at (-2.5,-0.5) {$ \pm $};
        \draw [fill=white,ultra thin] (-2.5,0.5) circle [radius=0.2];
        \node at (-2.5,0.5) {$ - $};
        \draw [fill=white,ultra thin] (-2.5,1.5) circle [radius=0.2];
        \node at (-2.5,1.5) {$ + $};
        \draw [fill=white,ultra thin] (-1.5,-0.5) circle [radius=0.2];
        \node at (-1.5,-0.5) {$ \pm $};
        \draw [fill=white,ultra thin] (-1.5,0.5) circle [radius=0.2];
        \node at (-1.5,0.5) {$ + $};
        \draw [fill=white,ultra thin] (-1.5,1.5) circle [radius=0.2];
        \node at (-1.5,1.5) {$ \pm $};
        \draw [fill=white,ultra thin] (0.5,-0.5) circle [radius=0.2];
        \node at (0.5,-0.5) {$ - $};
        \draw [fill=white,ultra thin] (0.5,0.5) circle [radius=0.2];
        \node at (0.5,0.5) {$ + $};
        \draw [fill=white,ultra thin] (2.5,-0.5) circle [radius=0.2];
        \node at (2.5,-0.5) {$ - $};
        \draw [fill=white,ultra thin] (2.5,0.5) circle [radius=0.2];
        \node at (2.5,0.5) {$ + $};
        \draw [fill=white,ultra thin] (4.5,-0.5) circle [radius=0.2];
        \node at (4.5,-0.5) {$ \pm $};
        \draw [fill=white,ultra thin] (4.5,0.5) circle [radius=0.2];
        \node at (4.5,0.5) {$ + $};
        \draw [fill=white,ultra thin] (4.5,1.5) circle [radius=0.2];
        \node at (4.5,1.5) {$ + $};
        \draw [fill=white,ultra thin] (5.5,-0.5) circle [radius=0.2];
        \node at (5.5,-0.5) {$ + $};
        \draw [fill=white,ultra thin] (5.5,0.5) circle [radius=0.2];
        \node at (5.5,0.5) {$ - $};
        \node at (-1.5,0) [rectangle,draw,fill=white] {$ u' $};
        \node at (4.5,1) [rectangle,draw,fill=white] {$ w $};
        \draw [fill=gray] (-2.5,0) circle[radius=0.1];
        \draw [fill=gray] (-2.5,1) circle[radius=0.1];
        \draw [fill=gray] (-1.5,1) circle[radius=0.1];
        \draw [fill=gray] (0.5,0) circle[radius=0.1];
        \draw [fill=gray] (2.5,0) circle[radius=0.1];
        \draw [fill=gray] (4.5,0) circle[radius=0.1];
        \draw [fill=gray] (5.5,0) circle[radius=0.1];
  		\end{tikzpicture}
      \caption{Local configuration of $\te_i^2\te_jM$.\label{eeb}}
    \end{center}
  \end{figure}

  We draw attention to the addition of the box $x$ to the diagram, which is simply the box that is moved in $\te_i\te_jM$ by $\te_i$, which was treated without loss of generality as $u'$ when showing that $\te_i\te_j^2\te_iM =\te_j\te_i^2\te_jM$.

  We note in Figure~\ref{eb} that the $\leftmoon$ of $x$ cancels with the $\rightmoon$ of $w$ in $\sigma^\red_j(\te_iM)$ as there are no other movable boxes in row $j$ between $x$ and $w$. However, in Figure~\ref{eeb} we have that $x$ has moved from row $j$ to $i$ and thus is not in $\sigma_j(\te_i^2\te_jM)$. The only other differing box is the presence of $u'$ as a box in row $j$, however this adds another $\leftmoon$ to the $j$-signature, either taking place of $x$, its $\leftmoon$ cancelling with the $\rightmoon$ of $w$, or replacing the $\leftmoon$ contributed by whichever movable box between $u'$ and the former location of $x$, whose $\leftmoon$ now cancels with the $\rightmoon$ of $w$ in $\sigma^\red_j(\te_i^2\sigma_jM)$.

  Hence we have the equality, $\varphi_j(\te_i^2\te_jM) = \varphi_j(\te_iM)$. The other equality holds with similar reasoning. Hence we have in case \ref{r6i}, where $j=i+1$, that \pref{R6} holds.  Case \ref{r6ii}, where $j=i-1$, is proved in a similar manner, and hence \pref{R6} holds in general.

  \medskip

  \noindent \emph{Properties} \pref{R5'} \& \pref{R6'}:  The proofs of these properties are similar to those of \pref{R5} and \pref{R6}, and so will be omitted.
\end{proof}

%
\section{Main result} \label{sec_CrystalM}
%

In this section we prove our main result: that the ice crystal $\cM(\lambda)$ is isomorphic to the crystal $\cB(\lambda)$ of the irreducible $\fsl_n$-module of highest weight $\lambda$.  Our method is to show that the ice crystal has a unique highest weight element and then apply a result of Stembridge, characterizing the crystal $\cB(\lambda)$.

\begin{mydef}[Highest weight ice model]
  An element $M \in \cM(\lambda)$ such that $\te_i(M) = 0$ for all $i \in I$ is called a \emph{highest weight} element of $\cM(\lambda)$.
\end{mydef}

\begin{mylem} \label{lem_staircase}
  Suppose $M \in \cM(\lambda)$ is a highest weight $n \times s$ ice model.  Then, for all $i \in I$, there exists a $1 \leq q_i \leq s$ such that $(i,q) \in \beta(M)$ if and only if $q > q_i$. Moreover, we have $q_{i-1} \leq q_{i}$ for all $i>1$.
\end{mylem}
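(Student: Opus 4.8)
\emph{Plan.} The hypothesis is that $\te_i(M)=0$ for every $i\in I$. By Corollary~\ref{cor_defepsilonphi} this is equivalent to $\varepsilon_i(M)=0$ for all $i$, i.e.\ to the statement that $\sigma^\red_i(M)$ contains no $\rightmoon$. Unwinding Definition~\ref{def_isig}, this says that when the boxes of rows $i$ and $i+1$ are read in order of increasing column, every initial segment contains at least as many $\leftmoon$'s (boxes in row $i$) as $\rightmoon$'s (boxes in row $i+1$); taking the whole word gives $a^M_i\ge a^M_{i+1}$ for the box-counts of Definition~\ref{def_wt}. The final assertion $q_{i-1}\le q_i$ will then be immediate once the first assertion is proved: writing the boxes of row $i$ as the columns $q>q_i$ gives $a^M_i=s-q_i$, so $a^M_{i-1}\ge a^M_i$ becomes $q_{i-1}\le q_i$. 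Thus the whole lemma reduces to showing that the boxes of each row form a right-justified block, which I reformulate as \emph{right-closure}: if $(i,q)\in\beta(M)$ and $q<s$, then $(i,q+1)\in\beta(M)$.

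The one piece of ice-model input I would extract is a local fact. A box is a type~2 vertex, whose right edge is $-$ (Figure~\ref{validvertices}); hence if $(i,q)$ is a box and $(i,q+1)$ is not, then the shared edge forces $M_{i,q+1}^\leftarrow=-$, so $(i,q+1)$ has left sign $-$ and is therefore of type~3 or~5. In both of these the bottom edge is $-$, so $M_{i-1,q+1}^\uparrow=M_{i,q+1}^\downarrow=-$, and a vertex with top sign $-$ is of type~4 or~5, hence not a box. In short, a box immediately followed by a non-box in row $i$ forces $(i-1,q+1)\notin\beta(M)$.

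I would then prove right-closure by induction on the row $i$. For the base case $i=1$, a failure would produce a vertex $(1,q+1)$ of type~3 or~5 with $M_{1,q+1}^\downarrow=-$, contradicting the bottom boundary condition $M_{1,q}^\downarrow=+$ of Definition~\ref{def_boundary}; so row $1$ is right-closed. For the inductive step, assume row $i-1$ is right-closed and suppose, for a contradiction, that $(i,q)\in\beta(M)$ while $(i,q+1)\notin\beta(M)$ with $q+1\le s$. By the local fact $(i-1,q+1)\notin\beta(M)$, and since row $i-1$ is right-closed its boxes all lie in columns $>q+1$; in particular row $i-1$ has no box in any column $\le q$. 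Reading $\sigma_{i-1}(M)$, the initial segment through column $q$ therefore contains the $\rightmoon$ coming from the box $(i,q)$ but no $\leftmoon$ at all, so this $\rightmoon$ cannot be cancelled and survives in $\sigma^\red_{i-1}(M)$. Then $\varepsilon_{i-1}(M)>0$, i.e.\ $\te_{i-1}(M)\ne0$, contradicting the hypothesis.

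With right-closure established, each row $i$ has boxes exactly in the columns $q>q_i$, where $q_i$ is the largest column without a box (or $q_i=s$ if the row is box-free), which is the first assertion; the inequality $q_{i-1}\le q_i$ follows from $a^M_{i-1}\ge a^M_i$ as above. The step requiring the most care is the local fact of the second paragraph---reading off the forced vertex types around the box---together with the choice to compare against the row \emph{below}, which is exactly what lets the bottom boundary condition and the induction close the argument.
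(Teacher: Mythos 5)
Your proof is correct and takes essentially the same route as the paper's: a bottom-up induction on rows whose base case rests on the bottom boundary condition and whose inductive step combines the vertex-type analysis across the shared vertical edge with the cancellation in $\sigma_{i-1}(M)$ forced by $\te_{i-1}(M)=0$. The differences are only organizational: you run the inductive step by contradiction (a failure of right-closure leaves an uncancellable $\rightmoon$ coming from the box $(i,q)$), where the paper argues directly from the leftmost box of row $i$ and the solid block of boxes beneath it, and you obtain $q_{i-1}\le q_i$ from the box-count inequality $a^M_{i-1}\ge a^M_i$ rather than reading it off the construction.
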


\begin{proof}
  We prove the lemma by induction on $i \in I$.   Considering $i=1$, since $\te_1(M)=0$ we have that either there are no boxes $(2,q)\in\beta(M)$ or the $\rightmoon$'s of all boxes $(2,q)\in\beta(M)$ cancel in $\sigma^\red_1(M)$. If there are no boxes of the form $(2,q)$, then define $q_i := s$ for all $i$.  Now assume there is a box in row $2$, calling the leftmost such box $(2,u)$. Since its $\rightmoon$ cancels, we have a $(1,v)\in\beta(M)$, with $v\leq u$, whose $\leftmoon$ cancels with the $\rightmoon$ of $(2,u)$ in $\sigma^\red_1(M)$. However, all vertices in row $1$ have plus signs as bottom edges, so we have $(1,q)\in\beta(M)$ for all $q\geq v$ since a vertex with the bottom edge as a plus and left edge as a minus is necessarily a box.

  Hence we define $q_1 = v - 1$.
  \begin{figure}[!h]
    \begin{center}
      \begin{tikzpicture}[anchorbase]
        \node at (-4,-1) {$1$};
        \node at (-4,0) {$2$};
        \node[left] at (-3.1,-1) {$\cdots$};
        \draw [dashed, thin] (-3,-1) -- (-1,-1);
        \draw [dashed, thin] (0,-1) -- (1,-1);
        \draw [dashed, thin] (2,-1) -- (3,-1);
        \draw [dashed, thin] (0,0) -- (1,0);
        \draw [dashed, thin] (2,0) -- (3,0);
        \draw [dashed, thin] (-2.5,-1.5) -- (-2.5,-0.5);
        \draw [dashed, thin] (-1.5,-1.5) -- (-1.5,-0.5);
        \draw [dashed, thin] (0.5,-1.5) -- (0.5,0.5);
        \draw [dashed, thin] (2.5,-1.5) -- (2.5,0.5);
        \draw [fill=white,ultra thin] (-3,-1) circle [radius=0.2];
        \node at (-3,-1) {$ - $};
        \draw [fill=white,ultra thin] (-2,-1) circle [radius=0.2];
        \node at (-2,-1) {$ - $};
        \draw [fill=white,ultra thin] (-1,-1) circle [radius=0.2];
        \node at (-1,-1) {$ \pm $};
        \draw [fill=white,ultra thin] (0,-1) circle [radius=0.2];
        \node at (0,-1) {$ \pm $};
        \node[left] at (-0.1,-1) {$ \cdots $};
        \draw [fill=white,ultra thin] (0,0) circle [radius=0.2];
        \node at (0,0) {$ - $};
        \node[left] at (-0.1,0) {$ \cdots $};
        \draw [fill=white,ultra thin] (1,-1) circle [radius=0.2];
        \node at (1,-1) {$ \pm $};
        \draw [fill=white,ultra thin] (1,0) circle [radius=0.2];
        \node at (1,0) {$ - $};
        \draw [fill=white,ultra thin] (2,-1) circle [radius=0.2];
        \node at (2,-1) {$ \pm $};
        \node[left] at (1.9,-1) {$ \cdots $};
        \draw [fill=white,ultra thin] (2,0) circle [radius=0.2];
        \node at (2,0) {$ \pm $};
        \node[left] at (1.9,0) {$ \cdots $};
        \draw [fill=white,ultra thin] (3,-1) circle [radius=0.2];
        \node at (3,-1) {$ - $};
        \draw [fill=white,ultra thin] (3,0) circle [radius=0.2];
        \node at (3,0) {$ - $};
        \draw [fill=white,ultra thin] (-2.5,-1.5) circle [radius=0.2];
        \node at (-2.5,-1.5) {$ + $};
        \draw [fill=white,ultra thin] (-2.5,-0.5) circle [radius=0.2];
        \node at (-2.5,-0.5) {$ + $};
        \draw [fill=white,ultra thin] (-1.5,-1.5) circle [radius=0.2];
        \node at (-1.5,-1.5) {$ + $};
        \draw [fill=white,ultra thin] (-1.5,-0.5) circle [radius=0.2];
        \node at (-1.5,-0.5) {$ \pm $};
        \draw [fill=white,ultra thin] (0.5,-1.5) circle [radius=0.2];
        \node at (0.5,-1.5) {$ + $};
        \draw [fill=white,ultra thin] (0.5,-0.5) circle [radius=0.2];
        \node at (0.5,-0.5) {$ + $};
        \draw [fill=white,ultra thin] (0.5,0.5) circle [radius=0.2];
        \node at (0.5,0.5) {$ + $};
        \draw [fill=white,ultra thin] (2.5,-1.5) circle [radius=0.2];
        \node at (2.5,-1.5) {$ + $};
        \draw [fill=white,ultra thin] (2.5,-0.5) circle [radius=0.2];
        \node at (2.5,-0.5) {$ \pm $};
        \draw [fill=white,ultra thin] (2.5,0.5) circle [radius=0.2];
        \node at (2.5,0.5) {$ \pm $};
        \node at (-2.5,-1) [rectangle,draw,fill=white] {$+$};
        \node at (0.5,0) [rectangle,draw,fill=white] {$+$};
        \draw [fill=gray] (-2.5,-1) circle[radius=0.1];
        \draw [fill=gray] (-1.5,-1) circle[radius=0.1];
        \draw [fill=gray] (0.5,-1) circle[radius=0.1];
        \draw [fill=gray] (0.5,0) circle[radius=0.1];
        \draw [fill=gray] (2.5,-1) circle[radius=0.1];
        \draw [fill=gray] (2.5,0) circle[radius=0.1];
      \end{tikzpicture}
      $\Longrightarrow$
      \begin{tikzpicture}[anchorbase]
        \node at (-4,-1) {$1$};
        \node at (-4,0) {$2$};
        \node[left] at (-3.1,-1) {$\cdots$};
        \draw [dashed, thin] (-3,-1) -- (-1,-1);
        \draw [dashed, thin] (0,-1) -- (1,-1);
        \draw [dashed, thin] (2,-1) -- (3,-1);
        \draw [dashed, thin] (0,0) -- (1,0);
        \draw [dashed, thin] (2,0) -- (3,0);
        \draw [dashed, thin] (-2.5,-1.5) -- (-2.5,-0.5);
        \draw [dashed, thin] (-1.5,-1.5) -- (-1.5,-0.5);
        \draw [dashed, thin] (0.5,-1.5) -- (0.5,0.5);
        \draw [dashed, thin] (2.5,-1.5) -- (2.5,0.5);
        \draw [fill=white,ultra thin] (-3,-1) circle [radius=0.2];
        \node at (-3,-1) {$ - $};
        \draw [fill=white,ultra thin] (-2,-1) circle [radius=0.2];
        \node at (-2,-1) {$ - $};
        \draw [fill=white,ultra thin] (-1,-1) circle [radius=0.2];
        \node at (-1,-1) {$ - $};
        \draw [fill=white,ultra thin] (0,-1) circle [radius=0.2];
        \node at (0,-1) {$ - $};
        \node[left] at (-0.1,-1) {$ \cdots $};
        \draw [fill=white,ultra thin] (0,0) circle [radius=0.2];
        \node at (0,0) {$ - $};
        \node[left] at (-0.1,0) {$ \cdots $};
        \draw [fill=white,ultra thin] (1,-1) circle [radius=0.2];
        \node at (1,-1) {$ - $};
        \draw [fill=white,ultra thin] (1,0) circle [radius=0.2];
        \node at (1,0) {$ - $};
        \draw [fill=white,ultra thin] (2,-1) circle [radius=0.2];
        \node at (2,-1) {$ - $};
        \node[left] at (1.9,-1) {$ \cdots $};
        \draw [fill=white,ultra thin] (2,0) circle [radius=0.2];
        \node at (2,0) {$ \pm $};
        \node[left] at (1.9,0) {$ \cdots $};
        \draw [fill=white,ultra thin] (3,-1) circle [radius=0.2];
        \node at (3,-1) {$ - $};
        \draw [fill=white,ultra thin] (3,0) circle [radius=0.2];
        \node at (3,0) {$ - $};
        \draw [fill=white,ultra thin] (-2.5,-1.5) circle [radius=0.2];
        \node at (-2.5,-1.5) {$ + $};
        \draw [fill=white,ultra thin] (-2.5,-0.5) circle [radius=0.2];
        \node at (-2.5,-0.5) {$ + $};
        \draw [fill=white,ultra thin] (-1.5,-1.5) circle [radius=0.2];
        \node at (-1.5,-1.5) {$ + $};
        \draw [fill=white,ultra thin] (-1.5,-0.5) circle [radius=0.2];
        \node at (-1.5,-0.5) {$ + $};
        \draw [fill=white,ultra thin] (0.5,-1.5) circle [radius=0.2];
        \node at (0.5,-1.5) {$ + $};
        \draw [fill=white,ultra thin] (0.5,-0.5) circle [radius=0.2];
        \node at (0.5,-0.5) {$ + $};
        \draw [fill=white,ultra thin] (0.5,0.5) circle [radius=0.2];
        \node at (0.5,0.5) {$ + $};
        \draw [fill=white,ultra thin] (2.5,-1.5) circle [radius=0.2];
        \node at (2.5,-1.5) {$ + $};
        \draw [fill=white,ultra thin] (2.5,-0.5) circle [radius=0.2];
        \node at (2.5,-0.5) {$ + $};
        \draw [fill=white,ultra thin] (2.5,0.5) circle [radius=0.2];
        \node at (2.5,0.5) {$ \pm $};
        \node at (-2.5,-1) [rectangle,draw,fill=white] {$+$};
        \node at (-1.5,-1) [rectangle,draw,fill=white] {$+$};
        \node at (0.5,-1) [rectangle,draw,fill=white] {$+$};
        \node at (2.5,-1) [rectangle,draw,fill=white] {$+$};
        \node at (0.5,0) [rectangle,draw,fill=white] {$+$};
        \draw [fill=gray] (-2.5,-1) circle[radius=0.1];
        \draw [fill=gray] (-1.5,-1) circle[radius=0.1];
        \draw [fill=gray] (0.5,-1) circle[radius=0.1];
        \draw [fill=gray] (0.5,0) circle[radius=0.1];
        \draw [fill=gray] (2.5,-1) circle[radius=0.1];
        \draw [fill=gray] (2.5,0) circle[radius=0.1];
      \end{tikzpicture}
      \caption{The base `stair' of $M$.}
    \end{center}
  \end{figure}
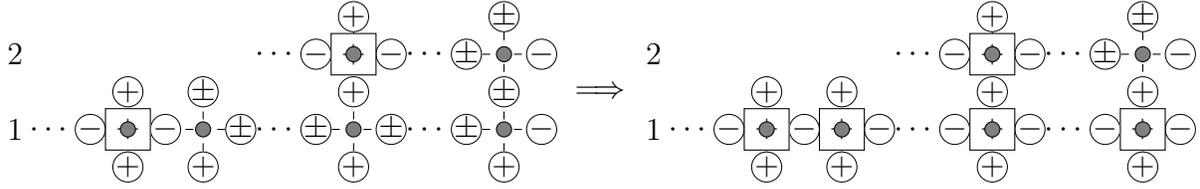

  Now, letting $1 < m \leq n$, we assume $q_1,q_2,\dotsc, q_{m-1}$ exist. If $q_{m-1} = s$ we are done, so we suppose $q_{m-1} < s$.  We assume there is a leftmost $(m,t)\in\beta(M)$, otherwise we are done. But since $\te_{m-1} M = 0$, we know its $\rightmoon$ cancels in $\sigma^\red_{m-1}(M)$, and by assumption we have that $t > q_{m-1}$. This means all vertices $M_{m,q}$ with $q\geq t$ satisfy $M_{m,q}^\downarrow = +$. Hence the same principle as in the base case applies, and $(m,q)\in\beta(M)$ for all $q > t-1 =: q_m$.
\end{proof}

Under the assumptions of Lemma~\ref{lem_staircase}, the boxes of $M$ form a ``staircase'' formation flush to the right side of the model. For each row $i$, the collection of boxes in that row is called the \textit{$i^\text{th}$ stair}, the length of the stair---the number of boxes---being $n + \lambda_1 - q_i$.  An example of this staircase formation is shown in Figure \ref{fig_staircase}.

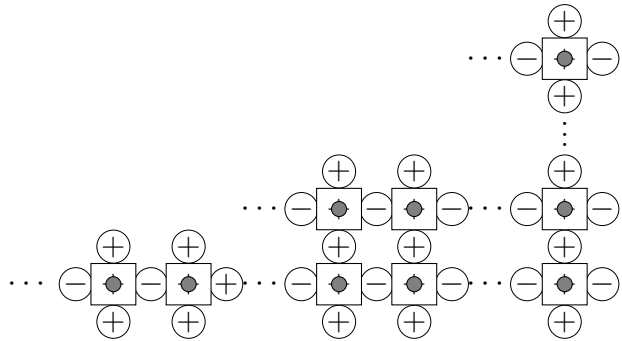
\begin{figure}[!h]
  \begin{center}
    \begin{tikzpicture}
      \draw [dashed, thin] (-3,-1) -- (-1,-1);
      \draw [dashed, thin] (0,-1) -- (2,-1);
      \draw [dashed, thin] (0,0) -- (2,0);
      \draw [dashed, thin] (3,-1) -- (4,-1);
      \draw [dashed, thin] (3,0) -- (4,0);
      \draw [dashed, thin] (3,2) -- (4,2);
      \draw [dashed, thin] (-2.5,-1.5) -- (-2.5,-0.5);
      \draw [dashed, thin] (-1.5,-1.5) -- (-1.5,-0.5);
      \draw [dashed, thin] (0.5,-1.5) -- (0.5,0.5);
      \draw [dashed, thin] (1.5,-1.5) -- (1.5,0.5);
      \draw [dashed, thin] (3.5,-1.5) -- (3.5,0.5);
      \draw [dashed, thin] (3.5,1.5) -- (3.5,2.5);
      \draw [fill=white,ultra thin] (-3,-1) circle [radius=0.22];
      \node at (-3,-1) {$ - $};
      \node[left] at (-3.2,-1) {$ \cdots $};
      \draw [fill=white,ultra thin] (-2,-1) circle [radius=0.22];
      \node at (-2,-1) {$ - $};
      \draw [fill=white,ultra thin] (-1,-1) circle [radius=0.22];
      \node at (-1,-1) {$ + $};
      \draw [fill=white,ultra thin] (0,-1) circle [radius=0.22];
      \node at (0,-1) {$ - $};
      \node[left] at (-0.1,-1) {$ \cdots $};
      \draw [fill=white,ultra thin] (0,0) circle [radius=0.22];
      \node at (0,0) {$ - $};
      \node[left] at (-0.1,0) {$ \cdots $};
      \draw [fill=white,ultra thin] (1,-1) circle [radius=0.22];
      \node at (1,-1) {$ - $};
      \draw [fill=white,ultra thin] (1,0) circle [radius=0.22];
      \node at (1,0) {$ - $};
      \draw [fill=white,ultra thin] (2,-1) circle [radius=0.22];
      \node at (2,-1) {$ - $};
      \draw [fill=white,ultra thin] (2,0) circle [radius=0.22];
      \node at (2,0) {$ - $};
      \draw [fill=white,ultra thin] (3,-1) circle [radius=0.22];
      \node at (3,-1) {$ - $};
      \node[left] at (2.9,-1) {$ \cdots $};
      \draw [fill=white,ultra thin] (3,0) circle [radius=0.22];
      \node at (3,0) {$ - $};
      \node[left] at (2.9,0) {$ \cdots $};
      \draw [fill=white,ultra thin] (3,2) circle [radius=0.22];
      \node at (3,2) {$ - $};
      \node[left] at (2.9,2) {$ \cdots $};
      \draw [fill=white,ultra thin] (4,-1) circle [radius=0.22];
      \node at (4,-1) {$ - $};
      \draw [fill=white,ultra thin] (4,0) circle [radius=0.22];
      \node at (4,0) {$ - $};
      \draw [fill=white,ultra thin] (4,2) circle [radius=0.22];
      \node at (4,2) {$ - $};
      \draw [fill=white,ultra thin] (-2.5,-1.5) circle [radius=0.22];
      \node at (-2.5,-1.5) {$ + $};
      \draw [fill=white,ultra thin] (-2.5,-0.5) circle [radius=0.22];
      \node at (-2.5,-0.5) {$ + $};
      \draw [fill=white,ultra thin] (-1.5,-1.5) circle [radius=0.22];
      \node at (-1.5,-1.5) {$ + $};
      \draw [fill=white,ultra thin] (-1.5,-0.5) circle [radius=0.22];
      \node at (-1.5,-0.5) {$ + $};
      \draw [fill=white,ultra thin] (0.5,-1.5) circle [radius=0.22];
      \node at (0.5,-1.5) {$ + $};
      \draw [fill=white,ultra thin] (0.5,-0.5) circle [radius=0.22];
      \node at (0.5,-0.5) {$ + $};
      \draw [fill=white,ultra thin] (0.5,0.5) circle [radius=0.22];
      \node at (0.5,0.5) {$ + $};
      \draw [fill=white,ultra thin] (1.5,-1.5) circle [radius=0.22];
      \node at (1.5,-1.5) {$ + $};
      \draw [fill=white,ultra thin] (1.5,-0.5) circle [radius=0.22];
      \node at (1.5,-0.5) {$ + $};
      \draw [fill=white,ultra thin] (1.5,0.5) circle [radius=0.22];
      \node at (1.5,0.5) {$ + $};
      \draw [fill=white,ultra thin] (3.5,-1.5) circle [radius=0.22];
      \node at (3.5,-1.5) {$ + $};
      \draw [fill=white,ultra thin] (3.5,-0.5) circle [radius=0.22];
      \node at (3.5,-0.5) {$ + $};
      \draw [fill=white,ultra thin] (3.5,0.5) circle [radius=0.22];
      \node at (3.5,0.5) {$ + $};
      \draw [fill=white,ultra thin] (3.5,1.5) circle [radius=0.22];
      \node at (3.5,1.5) {$ + $};
      \draw [fill=white,ultra thin] (3.5,2.5) circle [radius=0.22];
      \node at (3.5,2.5) {$ + $};
      \node at (-2.5,-1) [rectangle,draw,fill=white] {$+$};
      \node at (-1.5,-1) [rectangle,draw,fill=white] {$+$};
      \node at (0.5,-1) [rectangle,draw,fill=white] {$+$};
      \node at (0.5,0) [rectangle,draw,fill=white] {$+$};
      \node at (1.5,-1) [rectangle,draw,fill=white] {$+$};
      \node at (1.5,0) [rectangle,draw,fill=white] {$+$};
      \node at (3.5,0) [rectangle,draw,fill=white] {$+$};
      \node at (3.5,-1) [rectangle,draw,fill=white] {$+$};
      \node at (3.5,2) [rectangle,draw,fill=white] {$+$};
      \node at (3.5,1.1) {$\vdots$};
      \draw [fill=gray] (-2.5,-1) circle[radius=0.1];
      \draw [fill=gray] (-1.5,-1) circle[radius=0.1];
      \draw [fill=gray] (0.5,-1) circle[radius=0.1];
      \draw [fill=gray] (0.5,0) circle[radius=0.1];
      \draw [fill=gray] (1.5,-1) circle[radius=0.1];
      \draw [fill=gray] (1.5,0) circle[radius=0.1];
      \draw [fill=gray] (3.5,-1) circle[radius=0.1];
      \draw [fill=gray] (3.5,0) circle[radius=0.1];
      \draw [fill=gray] (3.5,2) circle[radius=0.1];
		\end{tikzpicture}
    \caption{The staircase of boxes.}\label{fig_staircase}
  \end{center}
\end{figure}

\begin{mydef}[Diagonal of minuses]  \label{def_diagonalofminuses}
  Suppose $M\in\cM(\lambda)$ is an $n\times s$ ice model.  We say that there is a \emph{diagonal of minuses} above $(p,q)\in\beta(M)$ if and only if $M_{p+k,q-1- k}^\uparrow = -$ for all $0 \leq k \leq \min\{q-1,n-p\}$. See Figure \ref{diagminus}.

  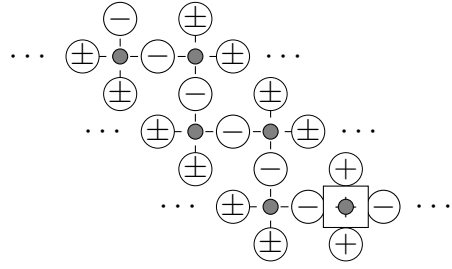
\begin{figure}[!h]
    \begin{center}
      \begin{tikzpicture}
        \draw [dashed, thin] (0,-1) -- (2,-1);
        \draw [dashed, thin] (-1,0) -- (1,0);
        \draw [dashed, thin] (-2,1) -- (0,1);
        \draw [dashed, thin] (-1.5,0.5) -- (-1.5,1.5);
        \draw [dashed, thin] (-0.5,-0.5) -- (-0.5,1.5);
        \draw [dashed, thin] (0.5,-1.5) -- (0.5,0.5);
        \draw [dashed, thin] (1.5,-1.5) -- (1.5,-0.5);
        \draw [fill=white,ultra thin] (-2,1) circle [radius=0.22];
        \node at (-2,1) {$ \pm $};
        \node at (-2.7,1) {$ \cdots $};
        \draw [fill=white,ultra thin] (-1,0) circle [radius=0.22];
        \node at (-1,0) {$ \pm $};
        \draw [fill=white,thin] (-1,1) circle [radius=0.22];
        \node at (-1,1) {$ - $};
        \node at (-1.7,0) {$ \cdots $};
        \draw [fill=white,ultra thin] (0,-1) circle [radius=0.22];
        \node at (0,-1) {$ \pm $};
        \draw [fill=white,thin] (0,0) circle [radius=0.22];
        \node at (0,0) {$ - $};
        \draw [fill=white,ultra thin] (0,1) circle [radius=0.22];
        \node at (0,1) {$ \pm $};
        \node at (-0.7,-1) {$ \cdots $};
        \node at (0.7,1) {$ \cdots $};
        \draw [fill=white,thin] (1,-1) circle [radius=0.22];
        \node at (1,-1) {$ - $};
        \draw [fill=white,ultra thin] (1,0) circle [radius=0.22];
        \node at (1,0) {$ \pm $};
        \node at (1.7,0) {$ \cdots $};
        \draw [fill=white,thin] (2,-1) circle [radius=0.22];
        \node at (2,-1) {$ - $};
        \node at (2.7,-1) {$ \cdots $};
        \draw [fill=white,ultra thin] (-1.5,0.5) circle [radius=0.22];
        \node at (-1.5,0.5) {$ \pm $};
        \draw [fill=white,thin] (-1.5,1.5) circle [radius=0.22];
        \node at (-1.5,1.5) {$ - $};
        \draw [fill=white,ultra thin] (-0.5,-0.5) circle [radius=0.22];
        \node at (-0.5,-0.5) {$ \pm $};
        \draw [fill=white,thin] (-0.5,0.5) circle [radius=0.22];
        \node at (-0.5,0.5) {$ - $};
        \draw [fill=white,ultra thin] (-0.5,1.5) circle [radius=0.22];
        \node at (-0.5,1.5) {$ \pm $};
        \draw [fill=white,ultra thin] (0.5,-1.5) circle [radius=0.22];
        \node at (0.5,-1.5) {$ \pm $};
        \draw [fill=white,thin] (0.5,-0.5) circle [radius=0.22];
        \node at (0.5,-0.5) {$ - $};
        \draw [fill=white,ultra thin] (0.5,0.5) circle [radius=0.22];
        \node at (0.5,0.5) {$ \pm $};
        \draw [fill=white,ultra thin] (1.5,-1.5) circle [radius=0.22];
        \node at (1.5,-1.5) {$ + $};
        \draw [fill=white,ultra thin] (1.5,-0.5) circle [radius=0.22];
        \node at (1.5,-0.5) {$ + $};
        \node at (1.5,-1) [rectangle,draw,fill=white] {$+$};
        \draw [fill=gray] (-1.5,1) circle[radius=0.1];
        \draw [fill=gray] (-0.5,0) circle[radius=0.1];
        \draw [fill=gray] (-0.5,1) circle[radius=0.1];
        \draw [fill=gray] (0.5,-1) circle[radius=0.1];
        \draw [fill=gray] (0.5,0) circle[radius=0.1];
        \draw [fill=gray] (1.5,-1) circle[radius=0.1];
			\end{tikzpicture}
      \caption{Example of a diagonal of minuses.\label{diagminus}}
    \end{center}
  \end{figure}
\end{mydef}

\begin{mylem} \label{lem_minusdiagonal}
  Suppose $M \in \cM(\lambda)$ is a highest weight $n \times s$ ice model and $i \in I$.  If $(i,q)\in\beta(M)$ is the leftmost box in the $i^\text{th}$ stair, then there is a diagonal of minuses above $(i,q)$.
\end{mylem}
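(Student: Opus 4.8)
The plan is to prove the statement by induction on $k$, propagating the minus sign along the diagonal one vertex at a time using only the local validity of vertex configurations (Figure~\ref{validvertices}) together with the staircase structure of Lemma~\ref{lem_staircase}. Write $q = q_i + 1$, so that the diagonal vertices are $(i+k,\,q-1-k)$, the vertex at step $k$ lying in column $q_i - k$. The inductive claim is that each such vertex has top edge $-$, for every step at which it still lies in the grid.

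For the base case $k=0$, consider the left neighbour $(i,q-1)$ of the box. Since $(i,q)$ is of type~2 we have $M_{i,q}^\leftarrow = -$, hence $M_{i,q-1}^\rightarrow = -$. Inspecting the five configurations, the only ones with right edge $-$ are types 2, 4, and 5; since $(i,q-1)$ lies strictly to the left of the leftmost box $(i,q)$ of the $i$-th stair, it is not of type~2, so it is of type 4 or 5, both of which have top edge $-$. This gives $M_{i,q-1}^\uparrow = -$.

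For the inductive step, suppose $M_{i+k,\,q-1-k}^\uparrow = -$. Let $V = (i+k+1,\,q-1-k)$ be the vertex directly above it. Then $M_V^\downarrow = -$, and the only configurations with bottom edge $-$ are types 3 and 5, both of which have left edge $-$; hence $M_V^\leftarrow = -$, and therefore the vertex $(i+k+1,\,q-2-k)$ immediately to the left of $V$ has right edge $-$. As in the base case it is thus of type 2, 4, or 5, and to conclude $M_{i+k+1,\,q-2-k}^\uparrow = -$ it suffices to rule out type~2, i.e.\ to show this vertex is not a box. Here is where the staircase enters: by Lemma~\ref{lem_staircase} the leftmost box of row $i+k+1$ sits in column $q_{i+k+1}+1$, and by the monotonicity $q_i \le q_{i+1} \le \cdots$ this is at least $q_i + 1$. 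Since the vertex in question sits in column $q_i - 1 - k < q_i + 1$, it lies strictly to the left of every box of its row, so it is not a box, completing the step.

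The main obstacle is exactly this box-exclusion, and in particular the top row. The column estimate above relies on having a stair, i.e.\ a value $q_m$, available in every row the diagonal meets, whereas Lemma~\ref{lem_staircase} is phrased only for $i \in I$. I would note that its proof in fact produces a $q_n$ for the top row $n$ as well, since the boxes of row $n$ are forced to cancel in $\sigma^\red_{n-1}(M)$ by $\te_{n-1}M = 0$, with $q_{n-1} \le q_n$, so the same estimate applies there. Alternatively, when the diagonal reaches the left boundary (column $1$) the relevant vertex has $M^\leftarrow = +$ by the left boundary condition of $\cM(\lambda)$, which forces type~4 directly and yields top edge $-$ with no appeal to the staircase; this also explains where the diagonal terminates. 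Everything else is routine case-checking against the five configurations of Figure~\ref{validvertices}.
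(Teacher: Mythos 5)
Your proof is correct and takes essentially the same route as the paper's: propagate the minus sign diagonally up and to the left by case-checking the five valid configurations (a non-box vertex with right edge $-$ must be of type 4 or 5, hence has top edge $-$; a vertex with bottom edge $-$ must be of type 3 or 5, hence has left edge $-$), ruling out boxes via the staircase monotonicity of Lemma~\ref{lem_staircase}. Your two supplementary observations---that the staircase value $q_n$ for row $n$ is furnished by the proof (though not the statement) of Lemma~\ref{lem_staircase}, and that the left boundary condition settles the column-$1$ case---carefully patch points that the paper's terser ``applying the same reasoning'' glosses over.
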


\begin{proof}
  If the $p^\text{th}$ stair is of length $0$, then $M_{p,s}^\uparrow = -$ and we see that locally $M$ appears as in Figure \ref{fig_ps}.
  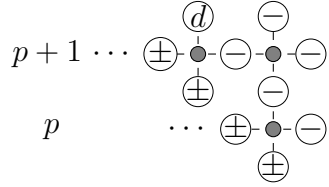
\begin{figure}[!h]
  	\begin{center}
      \begin{tikzpicture}
        \draw [dashed, thin] (-2,-1) -- (-1,-1);
        \draw [dashed, thin] (-3,0) -- (-1,0);
        \draw [dashed, thin] (-2.5,-0.5) -- (-2.5,0.5);
        \draw [dashed, thin] (-1.5,-1.5) -- (-1.5,0.5);
        \draw [fill=white,ultra thin] (-3,0) circle [radius=0.22];
        \node at (-3,0) {$ \pm $};
        \draw [fill=white,ultra thin] (-2,-1) circle [radius=0.2];
        \node at (-2,-1) {$ \pm $};
        \draw [fill=white,ultra thin] (-2,0) circle [radius=0.2];
        \node at (-2,0) {$ - $};
        \draw [fill=white,ultra thin] (-1,-1) circle [radius=0.2];
        \node at (-1,-1) {$ - $};
        \draw [fill=white,ultra thin] (-1,0) circle [radius=0.2];
        \node at (-1,0) {$ - $};
        \draw [fill=white,ultra thin] (-1.5,-1.5) circle [radius=0.2];
        \node at (-1.5,-1.5) {$ \pm $};
        \draw [fill=white,ultra thin] (-1.5,-0.5) circle [radius=0.2];
        \node at (-1.5,-0.5) {$ - $};
        \draw [fill=white,ultra thin] (-1.5,0.5) circle [radius=0.2];
        \node at (-1.5,0.5) {$ - $};
        \draw [fill=white,ultra thin] (-2.5,-0.5) circle [radius=0.2];
        \node at (-2.5,-0.5) {$ \pm $};
        \draw [fill=white,ultra thin] (-2.5,0.5) circle [radius=0.2];
        \node at (-2.5,0.5) {$ d $};
        \draw [fill=gray] (-2.5,0) circle[radius=0.1];
        \draw [fill=gray] (-1.5,-1) circle[radius=0.1];
        \draw [fill=gray] (-1.5,0) circle[radius=0.1];	
        \node[left] at (-3.2,0) {$ \cdots $};
        \node[left] at (-3.9,0) {$ p+1 $};
        \node[left] at (-2.2,-1) {$ \cdots $};
        \node[left] at (-4.2,-1) {$ p $};
      \end{tikzpicture}
      \caption{Local configuration around $M_{p,s}$.\label{fig_ps}}
    \end{center}
  \end{figure}

  If $d=+$, then we have that $(p+1,s-1)\in\beta(M)$, which would be a contradiction as the $(p+1)^\text{st}$ stair would be longer than the $p^\text{th}$. Hence $d=-$. Applying the same reasoning yields that $M_{p+k,s-k}^\uparrow = -$ for all $0\leq k < \min\{s,n-p\}$.

  Otherwise, suppose $(p,q)\in\beta(M)$ is the leftmost box in a stair of $M$. Since $(p,q-1)\not\in\beta(M)$, $M_{p,q-1}^\uparrow = -$, and we may apply the same reasoning as above.
\end{proof}

\begin{mylem} \label{lem_stairbound}
  Suppose $M\in\cM(\lambda)$ is a highest weight $n\times s$ ice model. If $(p,q) \in\beta(M)$, then $q \geq n - p + 2$.
\end{mylem}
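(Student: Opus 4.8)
The plan is to reduce the statement to the leftmost box in each stair and then show that the diagonal of minuses such a box carries cannot descend to the left edge of the model without violating the left boundary condition. First I would invoke Lemma~\ref{lem_staircase}: the boxes in row $p$ of the highest weight model $M$ occupy exactly the columns $q > q_p$, so if row $p$ contains a box then it contains a unique leftmost box $(p,c_p)$ with $c_p = q_p + 1$, and every box $(p,q)\in\beta(M)$ satisfies $q \ge c_p$. It therefore suffices to prove $c_p \ge n - p + 2$. Note also that $c_p \ge 2$ by Lemma~\ref{lemma_noboxin0}.

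Next I would apply Lemma~\ref{lem_minusdiagonal} to obtain a diagonal of minuses above $(p,c_p)$, i.e.\ $M_{p+k,\,c_p-1-k}^{\uparrow} = -$ for all $0 \le k \le \min\{c_p-1,\,n-p\}$, and then argue by contradiction. Suppose $c_p \le n - p + 1$. Then $k = c_p - 2$ satisfies $0 \le k \le \min\{c_p-1,\,n-p\}$, so the diagonal puts a minus on the top edge of the first-column vertex $(p+c_p-2,\,1)$. Since $p + c_p - 2 \le n - 1$, the vertex $(p+c_p-1,\,1)$ genuinely lies in the model, and its bottom edge coincides with the aforementioned top edge, hence is also a minus. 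Inspecting the configurations of Figure~\ref{validvertices}, the only valid vertices with a minus on the bottom edge are types $3$ and $5$, each of which has a minus on its left edge; this contradicts the boundary condition $M_{p+c_p-1,\,1}^{\leftarrow} = +$ of Definition~\ref{def_boundary}. Therefore $c_p \ge n - p + 2$, and since an arbitrary box $(p,q)$ has $q \ge c_p$, the bound holds for all boxes.

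The step I expect to be the main obstacle is the index bookkeeping that makes this contradiction precise: one must verify that exactly when $c_p \le n-p+1$ the diagonal reaches column $1$ at a row strictly below the top (so that the vertex one step further down actually exists within the $n\times s$ lattice), which is what permits the left boundary condition to be brought to bear. Equivalently, this is the observation that forcing the full diagonal $M_{p+k,\,c_p-1-k}^{\uparrow}=-$ to consist of honest lattice vertices already requires $c_p-1-(n-p)\ge 1$; once this is set up, the incompatibility with the valid vertex types is immediate.
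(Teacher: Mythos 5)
Your proof is correct and takes essentially the same route as the paper's: reduce to the leftmost box of the $p^{\text{th}}$ stair, invoke the diagonal of minuses from Lemma~\ref{lem_minusdiagonal}, and show that if $q < n-p+2$ this diagonal reaches column $1$ strictly below the top row, forcing a minus on the left edge of the vertex above (via the type~3/type~5 inspection) in contradiction with Definition~\ref{def_boundary}. Your write-up merely makes explicit the index bookkeeping that the paper's proof leaves implicit.
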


\begin{proof}
  It suffices to show this condition for the leftmost box. Suppose, towards a contradiction, that $q < n - p + 2$ for the leftmost box $(p,q)$ of the $p^\text{th}$ stair.  This implies that $q-1 \leq n-p$---that is, that the horizontal distance of the box from the left is at most the vertical distance to the top of the ice model. If this were the case, the diagonal of minuses above $(p,q)$ would reach a vertex $M_{t,1}$ for some $p \leq t < n$. But if $M_{t,1}^\uparrow = -$, then $M_{t+1,1}^\leftarrow = -$, contradicting the boundary conditions.
\end{proof}

\begin{mycor} \label{cor_diagonaltothetop}
  Suppose $M \in \cM(\lambda)$ is a highest weight $n\times s$ ice model and $i \in I$.  If $(i,q) \in \beta(M)$ is the leftmost box in the $i^\text{th}$ stair, then $M_{i+k,q-1-k}^\uparrow = -$ for all $0\leq k\leq n-i$. That is, the diagonal of minuses above $(i,q)$ reaches the top of the ice model.
\end{mycor}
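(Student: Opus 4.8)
The plan is to obtain the corollary directly from Lemma~\ref{lem_minusdiagonal} and Lemma~\ref{lem_stairbound}; the only substantive point is a comparison of two upper bounds on the length of the diagonal of minuses.

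First I would apply Lemma~\ref{lem_minusdiagonal} to the leftmost box $(i,q)$ of the $i^\text{th}$ stair, producing a diagonal of minuses above $(i,q)$. By Definition~\ref{def_diagonalofminuses} this means precisely that $M_{i+k,q-1-k}^\uparrow = -$ for all $0 \leq k \leq \min\{q-1,\,n-i\}$. The conclusion of the corollary is the same statement with the range of $k$ extended to $0 \leq k \leq n-i$, so it suffices to check that $\min\{q-1,\,n-i\} = n-i$, that is, that $q - 1 \geq n - i$.

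This last inequality is exactly the content of Lemma~\ref{lem_stairbound}: applying it to the box $(i,q) \in \beta(M)$ (taking $p = i$) gives $q \geq n - i + 2$, and hence $q - 1 \geq n - i + 1 > n - i$. Consequently $\min\{q-1,\,n-i\} = n-i$, so the diagonal of minuses of Lemma~\ref{lem_minusdiagonal} runs over the full range $0 \leq k \leq n-i$; at the endpoint $k = n-i$ the row index equals $i + (n-i) = n$, confirming that the diagonal reaches the top row. I do not anticipate any genuine obstacle here, as the result is an immediate consequence of the two preceding lemmas once one observes that the stair bound forces the minimum in Definition~\ref{def_diagonalofminuses} to be realized by $n-i$ rather than $q-1$.
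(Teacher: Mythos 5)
Your proposal is correct and is exactly the derivation the paper intends: the corollary is stated without proof precisely because it follows by combining Lemma~\ref{lem_minusdiagonal} (existence of the diagonal of minuses up to $k \le \min\{q-1,n-i\}$) with Lemma~\ref{lem_stairbound} (which gives $q-1 \ge n-i+1$, forcing the minimum to be $n-i$). Your explicit check that the endpoint $k = n-i$ lands in row $n$ completes the argument exactly as the paper's implicit reasoning does.
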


The next lemma shows that for a highest weight ice model $M$, the minus signs in the top boundary condition are in bijective correspondence with the stairs.

\begin{mylem} \label{lem_stairbijection}
  If $M\in\cM(\lambda)$ is a highest weight $n \times s$ ice model, then the set of stairs in $M$ (including those of length $0$) is in bijective correspondence with the set
  \[
    \{1 \leq q \leq s \mid M_{n,q}^\uparrow = - \}
  \]
  of minuses at the top of $M$.  Furthermore, $\wt(M) = \lambda$.
\end{mylem}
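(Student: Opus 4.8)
The plan is to exhibit the natural bijection that sends the $i$-th stair to the top-boundary minus lying at the upper end of its diagonal of minuses, and then to read the weight off from the resulting column identities. All the geometric content is already contained in the preceding lemmas, so the argument is mostly a matter of organizing a counting/injectivity statement.

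First I would count both sides. By the induction in Lemma~\ref{lem_staircase} there are exactly $n$ stairs, one per row $i \in \{1,\dots,n\}$, with $(i,q)\in\beta(M)$ iff $q > q_i$, and with $q_1 \le q_2 \le \dots \le q_n$. On the boundary side, item~\ref{def:boundary-top-minuses} of Definition~\ref{def_boundary} gives $M_{n,q}^\uparrow = -$ exactly for $q = c_j := \lambda_1 + j - \lambda_j$, $j \in \{1,\dots,n\}$. Since $c_{j+1} - c_j = 1 + (\lambda_j - \lambda_{j+1}) \ge 1$, these columns are strictly increasing, so the top boundary carries precisely $n$ minuses, at columns $c_1 < \dots < c_n$ (with $c_1 = 1$ and $c_n = s$).

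Next I would build the map. To row $i$ I attach the column $d_i := q_i + i - n$. If the $i$-th stair is nonempty its leftmost box is $(i,q_i+1)$, and Corollary~\ref{cor_diagonaltothetop} produces a diagonal of minuses reaching the top at $M_{n,\,(q_i+1)-1-(n-i)}^\uparrow = M_{n,d_i}^\uparrow = -$; if the stair has length $0$ then $q_i = s$, and the length-$0$ analysis in the proof of Lemma~\ref{lem_minusdiagonal} gives the same diagonal from $M_{i,s}^\uparrow = -$ up to $M_{n,d_i}^\uparrow = -$. In either case $d_i$ is a top minus, and Lemma~\ref{lem_stairbound} guarantees $1 \le d_i \le s$. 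Because $q_i$ is weakly increasing, $d_{i+1} - d_i = (q_{i+1}-q_i)+1 \ge 1$, so $d_1 < \dots < d_n$ are distinct; thus $i \mapsto d_i$ is an injection from the $n$ stairs into the $n$ top minuses, hence a bijection. For the weight, matching the two strictly increasing lists forces $d_i = c_i$, i.e.\ $q_i + i - n = \lambda_1 + i - \lambda_i$, so $q_i = s - \lambda_i$; the $i$-th stair then has length $a_i^M = s - q_i = \lambda_i$, and $\wt(M) = \sum_{i=1}^n a_i^M \epsilon_i = \sum_{i=1}^n \lambda_i \epsilon_i = \lambda$, using $\lambda_n = 0$.

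The main obstacle is confined to the construction of the map: one must pin down that the diagonal emanating from row $i$ terminates at \emph{exactly} the column $d_i = q_i + i - n$, handling the nonempty case through Corollary~\ref{cor_diagonaltothetop} and the length-$0$ case separately via the proof of Lemma~\ref{lem_minusdiagonal}, and confirming $1 \le d_i \le s$. Once the diagonal endpoints are identified, the counting, the injectivity-forces-bijectivity step, and the extraction of $\wt(M) = \lambda$ are routine bookkeeping.
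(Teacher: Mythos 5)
Your proof is correct, and it takes a genuinely different route from the paper's at the key step. Both arguments send the $i$-th stair to a top minus in the same way (via the diagonal of minuses of Lemma~\ref{lem_minusdiagonal} and Corollary~\ref{cor_diagonaltothetop}), but the paper proves the converse direction by a separate local argument: assuming some top minus is not part of a diagonal above any box, it follows that diagonal down and to the right and derives a contradiction, at worst in row $1$, where the boundary condition $M_{1,q}^\downarrow = +$ forces a box. You avoid that converse argument entirely by counting: Definition~\ref{def_boundary}\pref{def:boundary-top-minuses} places exactly $n$ minuses on the top boundary, at the strictly increasing columns $c_j = \lambda_1 + j - \lambda_j$; there are exactly $n$ stairs; and your map $i \mapsto d_i = q_i + i - n$ is strictly increasing because $q_i$ is weakly increasing, so injectivity between two $n$-element sets forces bijectivity. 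Your method pays off in the second assertion: matching the two strictly increasing lists gives $d_i = c_i$, which is exactly the justification the paper elides when it asserts that the leftmost box $(i,q)$ of the $i$-th stair corresponds to the boundary minus of index $j = i$ (rather than some other $j$) in writing $q - 1 - n + i = \lambda_1 + i - \lambda_i$. What the paper's route buys in exchange is independence from the explicit boundary data: its argument that every top minus sits over a box uses only the local vertex rules, not the count of minuses. Two small repairs to your write-up: Lemma~\ref{lem_staircase} is stated only for $i \in I = \{1,\dotsc,n-1\}$, so you should note explicitly (as its inductive proof shows, and as the paper's own usage of stairs requires) that the staircase structure holds in every row $1,\dotsc,n$; and Lemma~\ref{lem_stairbound} concerns only rows containing a box, so for a length-$0$ stair the bound $1 \le d_i = s + i - n \le s$ should be observed directly from $s = \lambda_1 + n \ge n$ rather than quoted from that lemma.
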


\begin{proof}
  By Corollary \ref{cor_diagonaltothetop}, we have that every stair corresponds to a minus sign on the top boundary. We will now show the converse.

  Suppose we have $M_{n,q}^\uparrow = -$ and that $M_{n,q}$ is not a part of a diagonal of minuses above some box $(p,q)\in\beta(M)$.   Then locally we have a configuration as in Figure \ref{fig_minustop}.
  \begin{figure}[!h]
  	\begin{center}
      \begin{tikzpicture}
        \draw [dashed, thin] (-3,-1) -- (-1,-1);
        \draw [dashed, thin] (-2.5,-1.5) -- (-2.5,-0.5);
        \draw [dashed, thin] (-1.5,-1.5) -- (-1.5,-0.5);
        \draw [fill=white,ultra thin] (-3,-1) circle [radius=0.2];
        \node at (-3,-1) {$ \pm $};
        \draw [fill=white,ultra thin] (-2,-1) circle [radius=0.2];
        \node at (-2,-1) {$ - $};
        \draw [fill=white,ultra thin] (-1,-1) circle [radius=0.2];
        \node at (-1,-1) {$ \pm $};
        \draw [fill=white,ultra thin] (-1.5,-1.5) circle [radius=0.2];
        \node at (-1.5,-1.5) {$ c $};
        \draw [fill=white,ultra thin] (-1.5,-0.5) circle [radius=0.2];
        \node at (-1.5,-0.5) {$ b $};
        \draw [fill=white,ultra thin] (-2.5,-1.5) circle [radius=0.2];
        \node at (-2.5,-1.5) {$ \pm $};
        \draw [fill=white,ultra thin] (-2.5,-0.5) circle [radius=0.2];
        \node at (-2.5,-0.5) {$ - $};
        \draw [fill=gray] (-2.5,-1) circle[radius=0.1];
        \draw [fill=gray] (-1.5,-1) circle[radius=0.1];
        \node[left] at (-3.2,-1) {$ \cdots $};
        \node[right] at (-0.8,-1) {$ \cdots $};
        \node[below] at (-2,-1.5) {$ \vdots $};
      \end{tikzpicture}
      \caption{Local configuration around a minus, including the vertices $M_{n,q}$ and $M_{n,q+1}$.\label{fig_minustop}}
    \end{center}
  \end{figure}

  If $c=+$, then $(n,q+1)\in\beta(M)$ and we get a contradiction since then $M_{n,q}$ belongs to a diagonal of minuses above the box $(n,q+1)$.  Therefore, we suppose $c=-$.  Continuing in this manner, we will eventually reach a vertex in row $1$ whose bottom edge is necessarily a plus, and hence the vertex is a box.  This completes the proof of the first assertion of the lemma.

  We now prove that $\wt(M) = \lambda$.  It follows from the above and Corollary~\ref{cor_diagonaltothetop} that the leftmost box $(i,q)$ in the $i^\text{th}$ stair corresponds to the minus $M_{n,q-1-n+i}^\uparrow$ at the top of the model.  By Definition~\ref{def_boundary}\pref{def:boundary-top-minuses}, this implies that $q-1-n+i = \lambda_1 - \lambda_i + i$.  Hence, the number of boxes in the $i^\text{th}$ stair is
  \[
    \lambda_1 + n - q + 1 = \lambda_i.
  \]
  It then follows immediately from Definition~\ref{def_wt} that $\wt(M)=\lambda$.
\end{proof}

\begin{myprop} \label{prop_uniquehighestweight}
  For every partition $\lambda = (\lambda_1,\lambda_2,\dotsc,\lambda_n)$, there exists a unique highest weight ice model $M \in \cM(\lambda)$.
\end{myprop}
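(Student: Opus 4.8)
The plan is to combine the structural results already proved with one new observation---that an element of $\cM(\lambda)$ is completely determined by its set of boxes---and then to exhibit the unique highest weight model explicitly. First I would record that the box positions of any highest weight $M$ are forced. By Lemma~\ref{lem_staircase}, Corollary~\ref{cor_diagonaltothetop}, and Lemma~\ref{lem_stairbijection}, the boxes of a highest weight $M$ form the flush-right staircase in which row $i$ contains exactly $\lambda_i$ boxes; that is,
\[
  \beta(M) = \beta_\lambda := \{(i,q) \mid 1 \le i \le n,\ \lambda_1 + n - \lambda_i < q \le \lambda_1 + n\}.
\]
Thus $\beta(M)$ is the same for every highest weight element, and uniqueness reduces to showing that $\beta(M)$ determines $M$.

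For this reconstruction the key point is that, for any vertex, the pair consisting of its top and left signs together with the knowledge of whether it is a box determines its type, and hence its right and bottom signs. Indeed, consulting Figure~\ref{validvertices}, the four non-box types are distinguished by their (top, left) signs as $(+,+)$, $(+,-)$, $(-,+)$, $(-,-)$ for types $1,3,4,5$ respectively; the only coincidence with the box (type 2, whose (top, left) is also $(+,-)$) is type 3, and this is resolved by knowing boxhood. I would then process the vertices in reading order---row $n$ (the top) down to row $1$, and left to right within each row. When a vertex is reached, its top sign is known (from the boundary condition of Definition~\ref{def_boundary} for row $n$, or from the bottom sign of the vertex above otherwise) and its left sign is known (from the left boundary condition for column $1$, or from the right sign of the vertex to its left otherwise). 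An induction then forces every sign, so two models with the same box set coincide; combined with the previous paragraph this yields uniqueness.

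For existence I would take $M_\lambda$ to be the model obtained by declaring $\beta(M_\lambda) = \beta_\lambda$ and filling in the remaining signs by the same reading-order procedure. A direct (finite) check shows that every vertex so produced is valid and that all four boundary conditions of Definition~\ref{def_boundary} hold, so $M_\lambda \in \cM(\lambda)$. It remains to see that $M_\lambda$ is highest weight. Fix $i \in I$; since $\lambda_i \ge \lambda_{i+1}$, every column containing a box in row $i+1$ also contains one in row $i$, and within such a column the row-$i$ box precedes the row-$(i+1)$ box in the order $\preceq$. Hence
\[
  \sigma_i(M_\lambda) = \leftmoon^{\lambda_i - \lambda_{i+1}}\,(\leftmoon\rightmoon)^{\lambda_{i+1}}, \qquad \sigma^\red_i(M_\lambda) = \leftmoon^{\lambda_i - \lambda_{i+1}}.
\]
No $\rightmoon$ survives, so $\te_i(M_\lambda) = 0$ for all $i \in I$ by Definition~\ref{def_fiei}, and $M_\lambda$ is a highest weight element.

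The conceptual crux is the reconstruction observation of the second paragraph; once it is in hand, both halves of the statement are short, since the earlier lemmas already pin down $\beta_\lambda$. The only genuinely laborious step is the finite verification that the explicitly reconstructed $M_\lambda$ consists solely of valid vertices and satisfies the boundary conditions---in particular that the reconstructed top and left signs at each declared box really are $(+,-)$, as required for type 2---but this is routine bookkeeping rather than a real obstacle.
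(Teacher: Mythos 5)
Your proposal is correct, and its two halves relate to the paper's proof differently. The uniqueness half is essentially the paper's argument in mirror image: the paper likewise reduces to the forced box set (via Lemma~\ref{lem_stairbijection}) and then reconstructs every sign from the box data, but it observes that the four non-box types are determined by their (right, bottom) edges (the collision there being type 2 versus type 4) and runs the induction from the bottom-right corner, right-to-left and bottom-to-top, seeded by the bottom and right boundary conditions, whereas you use (top, left) edges (collision: type 2 versus type 3) and run the induction in reading order from the top-left. The existence half is where you genuinely diverge: the paper gets existence abstractly from \pref{C2} and finiteness of $\cM(\lambda)$ (repeatedly apply the $\te_i$, which must terminate), while you construct the candidate $M_\lambda$ explicitly from $\beta_\lambda$ and verify it is highest weight via the signature computation $\sigma^\red_i(M_\lambda) = \leftmoon^{\lambda_i - \lambda_{i+1}}$, which is correct. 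Your route buys something the paper's does not: the abstract argument tacitly assumes $\cM(\lambda) \neq \emptyset$, which the paper never establishes except by appeal to the cited bijection with tableaux, whereas your construction proves non-emptiness outright. The price is the deferred validity check: that the reading-order filling never reaches a declared box whose incoming signs differ from $(+,-)$, and that the bottom and right boundary conditions come out correctly. Calling this a ``direct (finite) check'' slightly undersells it---since the claim is for all $\lambda$, it requires a uniform argument (each top-boundary minus propagates as a zigzag of type~4/type~3 vertices down and to the right until it feeds the left edge of the leftmost box of the corresponding stair or exits the right boundary, after which it runs through the boxes of that stair), not an enumeration---but the verification is indeed routine, and your outline stands.
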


\begin{proof}
  It follows from \pref{C2} and the fact that the set $\cM(\lambda)$ is finite that there exists a highest weight ice model $M \in \cM(\lambda)$.  It remains to show that the highest weight ice model is unique.

  Referring to the valid vertex configurations in Figure~\ref{validvertices}, we see that, excluding the vertex configuration that corresponds to a box (type 2), the four remaining configurations are all uniquely determined by their right and bottom edges.  Since, by Lemma~\ref{lem_stairbijection}, we know the exact locations of every box in our model, we may identify every other vertex uniquely using these two edges, inductively starting from the bottom-right vertex and identifying them right-to-left, bottom-to-top. Thus, $M$ is uniquely determined by $\lambda$.
\end{proof}

\begin{theorem} \label{blambda}
  For every partition $\lambda = (\lambda_1,\lambda_2,\dotsc,\lambda_n)$, the ice crystal $\cM(\lambda)$ is isomorphic to the irreducible highest weight crystal $\cB(\lambda)$ corresponding to the irreducible highest weight representation of $\fsl_n$ with highest weight $\lambda$.
\end{theorem}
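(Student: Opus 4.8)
The plan is to deduce the isomorphism from Stembridge's characterization of simply-laced crystals in \cite{Stembridge}: a connected regular crystal possessing a highest weight element of weight $\mu$ is isomorphic to $\cB(\mu)$. Since $\fsl_n$ is of type $A_{n-1}$, and hence simply-laced, this characterization applies in our setting. By Proposition~\ref{prop_regular} we already know that $\cM(\lambda)$ is regular in the sense of Definition~\ref{def:regular}, so this final step reduces to assembling two further ingredients: that $\cM(\lambda)$ is \emph{connected}, and that it has a highest weight element of weight exactly $\lambda$.

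The second ingredient is essentially already established. Proposition~\ref{prop_uniquehighestweight} produces a unique highest weight ice model $M_0 \in \cM(\lambda)$, that is, the unique $M_0$ satisfying $\te_i(M_0) = 0$ for all $i \in I$, and Lemma~\ref{lem_stairbijection} records that $\wt(M_0) = \lambda$. Thus $M_0$ is a highest weight element of the desired weight.

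For connectedness, I would argue from finiteness. The set $\cM(\lambda)$ is finite, being a collection of sign assignments on a fixed $n \times (\lambda_1 + n)$ lattice. Starting from an arbitrary $M \in \cM(\lambda)$ and repeatedly applying raising operators $\te_i$, each nonzero application strictly increases the weight by $\alpha_i$ in view of \pref{C2}; pairing with a fixed dominant coweight, each such step increases a bounded integer-valued height by one, so by finiteness the process must terminate after finitely many steps at some $M'$ with $\te_i(M') = 0$ for every $i \in I$. Hence every element of $\cM(\lambda)$ is joined, through a chain of crystal operators, to a highest weight element. Since Proposition~\ref{prop_uniquehighestweight} guarantees that $M_0$ is the only such element, every element lies in the connected component of $M_0$, and therefore $\cM(\lambda)$ is connected.

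Combining these facts, $\cM(\lambda)$ is a connected regular crystal with a highest weight element $M_0$ of weight $\lambda$, and Stembridge's theorem then yields $\cM(\lambda) \cong \cB(\lambda)$. The real difficulty of the argument is not concentrated in this concluding theorem, which is a direct appeal to \cite{Stembridge}, but rather in the inputs prepared beforehand: the verification of the regularity axioms \pref{R5}, \pref{R6}, and their primed analogues in Proposition~\ref{prop_regular}, together with the combinatorial analysis of the staircase structure of highest weight models (the diagonals of minuses and the stair-to-boundary bijection) underlying Proposition~\ref{prop_uniquehighestweight}. Once those are in hand, the isomorphism follows cleanly, and I would expect the only point requiring care here to be the precise invocation of Stembridge's characterization, namely ensuring that our regularity and connectedness hypotheses match his exactly.
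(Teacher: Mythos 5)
Your proposal is correct and takes essentially the same route as the paper: regularity (Proposition~\ref{prop_regular}) together with the unique highest weight element of weight $\lambda$ (Proposition~\ref{prop_uniquehighestweight}, via Lemma~\ref{lem_stairbijection}) fed into Stembridge's characterization \cite{Stembridge}. The only difference is that you spell out the connectedness step (finiteness forces every $M \in \cM(\lambda)$ to raise in finitely many steps to a highest weight element, which is unique), a point the paper leaves implicit in its direct appeal to Stembridge's theorems.
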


\begin{proof}
  The ice crystal $\cM(\lambda)$ is regular by Proposition~\ref{prop_regular} and has a unique highest weight element of weight $\lambda$ by Proposition~\ref{prop_uniquehighestweight}.  The result then follows immediately from \cite[Th.~3.2, Th.~3.3]{Stembridge}.
\end{proof}

%
%

\bibliographystyle{alphaurl}
\bibliography{ice-crystals}

\newcommand{\etalchar}[1]{$^{#1}$}
\newcommand{\arxiv}[1]{\href{http://arxiv.org/abs/#1}{\tt
  arXiv:\nolinkurl{#1}}}
\begin{thebibliography}{BBC{\etalchar{+}}12}

\bibitem[Bax89]{Bax89}
R.~J. Baxter.
\newblock {\em Exactly solved models in statistical mechanics}.
\newblock Academic Press, Inc. [Harcourt Brace Jovanovich, Publishers], London,
  1989.
\newblock Reprint of the 1982 original.

\bibitem[BBB]{BBB16}
B.~Brubaker, V.~Buciumas, and D.~Bump.
\newblock A {Y}ang-{B}axter equation for metaplectic ice.
\newblock \arxiv{arXiv:1604.02206}.

\bibitem[BBBF]{BBBF17}
B.~Brubaker, V.~Buciumas, D.~Bump, and S.~Friedberg.
\newblock {H}ecke modules from metaplectic ice.
\newblock \arxiv{arXiv:arXiv:1704.00701}.

\bibitem[BBC{\etalchar{+}}12]{BBCFSG12}
B.~Brubaker, D.~Bump, G.~Chinta, S.~Friedberg, and P.~E. Gunnells.
\newblock Metaplectic ice.
\newblock In {\em Multiple {D}irichlet series, {L}-functions and automorphic
  forms}, volume 300 of {\em Progr. Math.}, pages 65--92.
  Birkh\"auser/Springer, New York, 2012.
\newblock \href {http://dx.doi.org/10.1007/978-0-8176-8334-4_3}
  {\path{doi:10.1007/978-0-8176-8334-4_3}}.

\bibitem[BBF11]{BBF11}
B.~Brubaker, D.~Bump, and S.~Friedberg.
\newblock Schur polynomials and the {Y}ang-{B}axter equation.
\newblock {\em Comm. Math. Phys.}, 308(2):281--301, 2011.
\newblock \href {http://dx.doi.org/10.1007/s00220-011-1345-3}
  {\path{doi:10.1007/s00220-011-1345-3}}.

\bibitem[HK02]{hongintroduction}
J.~Hong and S.-J. Kang.
\newblock {\em Introduction to quantum groups and crystal bases}, volume~42 of
  {\em Graduate Studies in Mathematics}.
\newblock American Mathematical Society, Providence, RI, 2002.
\newblock \href {http://dx.doi.org/10.1090/gsm/042}
  {\path{doi:10.1090/gsm/042}}.

\bibitem[Rak]{Rokotoa}
T.~Rakotoarisoa.
\newblock 5-vertex models, {G}elfand-{T}setlin patterns and semi-standard young
  tableaux.
\newblock URL: \url{http://alistairsavage.ca/pubs/Rakotoarisoa-ice-models.pdf}.

\bibitem[Ste03]{Stembridge}
J.~R. Stembridge.
\newblock A local characterization of simply-laced crystals.
\newblock {\em Trans. Amer. Math. Soc.}, 355(12):4807--4823, 2003.
\newblock \href {http://dx.doi.org/10.1090/S0002-9947-03-03042-3}
  {\path{doi:10.1090/S0002-9947-03-03042-3}}.

\bibitem[Tok88]{Tok88}
T.~Tokuyama.
\newblock A generating function of strict {G}el$'$fand patterns and some
  formulas on characters of general linear groups.
\newblock {\em J. Math. Soc. Japan}, 40(4):671--685, 1988.
\newblock \href {http://dx.doi.org/10.2969/jmsj/04040671}
  {\path{doi:10.2969/jmsj/04040671}}.

\end{thebibliography}

\end{document}